\DeclareMathOperator{\can}{can}
\DeclareMathOperator{\Scal}{Scal}
\DeclareMathOperator{\Weyl}{Weyl}
\DeclareMathOperator{\Eucl}{Eucl}
\renewcommand{\O}{\operatorname{O}}
\renewcommand{\o}{\operatorname{o}}
\newcommand{\<}{\left<}
\renewcommand{\>}{\right>}
\renewcommand{\(}{\left(}
\renewcommand{\)}{\right)}
\renewcommand{\[}{\left[}
\renewcommand{\]}{\right]}
\def \rr {\mathbb{R}}
\def \rn {\mathbb{R}^n}
\def \crit {2^\star}
\def \ue {u_\eps}
\def \xe {x_\eps}
\def \eps {\varepsilon}
\newtheorem{theorem}{Theorem}[section]
\newtheorem{proposition}[theorem]{Proposition}
\newtheorem{lemma}[theorem]{Lemma}
\begin{document}

\date{October 4th, 2012.}

\title{Sign-changing blow-up for scalar curvature type equations}

\author{Fr\'ed\'eric Robert}
\address{Fr\'ed\'eric Robert, Institut \'Elie Cartan, Universit\'e de Lorraine, BP 239, F-54506 Vand{\oe}uvre-l\`es-Nancy, France}
\email{frederic.robert@univ-lorraine.fr}

\author{J\'er\^ome V\'etois}
\address{J\'er\^ome V\'etois, Universit\'e de Nice Sophia Antipolis, Laboratoire J.-A. Dieudonn\'e, CNRS UMR 6621, Parc Valrose, F-06108 Nice Cedex 2, France}
\email{vetois@unice.fr}

\thanks{The authors are partially supported by the ANR grant ANR-08-BLAN-0335-01.}

\begin{abstract}
Given $(M,g)$ a compact Riemannian manifold of dimension $n\geq 3$, we are interested in the existence of blowing-up sign-changing families $(\ue)_{\eps>0}\in C^{2,\theta}(M)$, $\theta\in (0,1)$, of solutions to
$$\Delta_g \ue+h\ue=|\ue|^{\frac{4}{n-2}-\eps}\ue\hbox{ in }M\,,$$
where $\Delta_g:=-\hbox{div}_g(\nabla)$ and $h\in C^{0,\theta}(M)$ is a potential. We prove that such families exist in two main cases: in small dimension $n\in \{3,4,5,6\}$ for any potential $h$ or in dimension $3\leq n\leq 9$ when $h\equiv\frac{n-2}{4(n-1)}\Scal_g$. These examples yield a complete panorama of the compactness/noncompactness of critical elliptic equations of scalar curvature type on compact manifolds. The changing of the sign is necessary due to the compactness results of Druet~\cite{Dru1} and Khuri--Marques--Schoen~\cite{KhuMarSch}. %Miscellaneous extensions of these results are also proved.
\end{abstract}

\maketitle

\section{Introduction}
Let $(M,g)$ be a smooth compact Riemannian manifold of dimension $n\geq 3$. Given $\theta\in (0,1)$, we consider solutions $u\in C^{2,\theta}(M)$ to the equation
\begin{equation}\label{eq:stab}
\Delta_gu+h u=\left|u\right|^{\crit-2}u\quad\text{in }M\,,
\end{equation}
where $h\in C^{0,\theta}(M)$, $\Delta_g:=-\hbox{div}_g(\nabla)$ is the Laplace-Beltrami operator, and $\crit:=\frac{2n}{n-2}$. When $h\equiv \frac{n-2}{4(n-1)}\Scal_g$ ($\Scal_g$ being the scalar curvature of $(M,g)$), \eqref{eq:stab} is the Yamabe equation and rewrites
\begin{equation}\label{Eq1}
\Delta_gu+c_n\Scal_gu=\left|u\right|^{2^*-2}u\quad\text{in }M\,,
\end{equation}
where $c_n:= \frac{n-2}{4(n-1)}$. The conformal invariance of the Yamabe equation induces a dynamic that makes equations \eqref{eq:stab} and \eqref{Eq1} unstable. Taking inspiration from the terminology introduced by R. Schoen~\cite{Sch2}, we say that equation \eqref{eq:stab} is compact (resp. positively compact) if for any family $(q_\eps)_\eps\in (2,\crit]$ such that $q_\eps\to \crit$ when $\eps\to 0$ and for any family of functions (resp. positive functions) $(\ue)_\eps\in C^{2,\theta}(M)$ of solutions to
\begin{equation}
\Delta_g\ue+h\ue=\left|\ue\right|^{q_\eps-2}\ue\quad\text{in }M\label{eq:bis}
\end{equation}
for $\eps>0$ small, then a uniform bound on the Dirichlet energy $(\Vert\nabla\ue\Vert_2)_\eps$ implies the relative compactness of $(\ue)_\eps$ in $C^{2}(M)$, and therefore the convergence of a subfamily of $(\ue)_\eps$ in $C^{2}(M)$. Otherwise, we say that equation \eqref{eq:stab} is noncompact (resp. non positively-compact). A basic example of non compact equation is \eqref{Eq1} on the canonical sphere $(\mathbb{S}^n,\can)$: we refer to the second part of this section for (positive) compactness results for equations like \eqref{eq:stab}.

\medskip\noindent We say that a family $(\ue)_{\eps>0}\in C^{2,\theta}(M)$ blows-up when $\eps\to 0$ if $\lim_{\eps\to 0}\Vert\ue\Vert_\infty=+\infty$.  It is now well-known (see Struwe~\cite{struwe84} for a description in Sobolev spaces and Druet--Hebey--Robert~\cite{dhr} for a description in $C^0$) that noncompactness is described by bubbles. In the present paper, we investigate the existence of families $(\ue)_\eps\in C^{2,\theta}(M)$ of sign-changing blowing-up solutions to the equation
\begin{equation}\label{EqEPS}
\Delta_g\ue+h \ue=\left|\ue\right|^{\crit-2-\eps}\ue\quad\text{in }M\,,\, \eps>0.
\end{equation}
In the sequel, we say that a blowing-up family $(\ue)_\eps\in C^{2,\theta}(M)$ is of type $(u_0-B)$ if there exists $u_0\in C^{2,\theta}(M)$ and a bubble $(B_\eps)_{\eps}$ (see definition \eqref{def:bubble} below) such that 
\begin{equation}\label{model:sol:bubble}
\ue=u_0-B_\eps+\o(1),
\end{equation}
where $\lim_{\eps\to 0}\o(1)=0$ in $H_1^2(M)$, the completion of $C^\infty(M)$ for the norm $u\mapsto \Vert u\Vert_2+\Vert\nabla u\Vert_2$. Our first result is the following:
\begin{theorem}[\bf dimensions $3\leq n\leq 6$ and arbitrary potential]\label{Th2}\samepage
Let $\(M,g\)$ be a smooth compact Riemannian manifold of dimension $3\le n\le6$ and let $h\in C^{0,\theta}(M)$ ($\theta\in (0,1)$) be such that $\Delta_g+h$ is coercive. Assume that there exists a nondegenerate solution $u_0\in C^{2,\theta}(M)$ to equation \eqref{eq:stab}. In case $n=6$, assume in addition that $c_n\Scal_g-h<2u_0$ in $M$. Then for $\varepsilon>0$ small, equation \eqref{EqEPS} admits a sign-changing solution $u_\varepsilon$ of type $(u_0-B)$. In particular, the family $(\ue)_{\eps>0}$ blows up as $\varepsilon\to0$ and \eqref{eq:stab} is noncompact.
\end{theorem}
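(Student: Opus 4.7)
The plan is to apply a Lyapunov--Schmidt finite-dimensional reduction, seeking $\ue$ as a small perturbation of approximate sign-changing solutions
\[
V_{\xi,\mu}:=u_0-W_{\xi,\mu},
\]
where $\xi\in M$, $\mu>0$ is a concentration parameter, and $W_{\xi,\mu}$ is the Euclidean bubble $\alpha_n\left(\mu/(\mu^2+|\cdot|^2)\right)^{(n-2)/2}$ transplanted to $M$ by a smooth cutoff in the exponential chart at $\xi$. Coercivity of $\Delta_g+h$ makes \eqref{EqEPS} the Euler--Lagrange equation of a $C^1$ functional $J_\eps$ on $H_1^2(M)$, and the goal is to find $(\xi_\eps,\mu_\eps)\in M\times(0,\infty)$ with $\mu_\eps\to 0$ and an orthogonal corrector $\phi_\eps$ so that $\ue:=V_{\xi_\eps,\mu_\eps}+\phi_\eps$ is a genuine critical point of $J_\eps$.

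First, I would develop the linear theory. The linearization of \eqref{EqEPS} at $V_{\xi,\mu}$ has an approximate kernel spanned, for small $\mu$, by the $n+1$ functions $Z^0_{\xi,\mu}:=\mu\,\partial_\mu W_{\xi,\mu}$ and $Z^j_{\xi,\mu}:=\mu\,\partial_{\xi_j}W_{\xi,\mu}$ ($1\le j\le n$), coming from the dilation and translation invariances of the Euclidean bubble. Nondegeneracy of $u_0$ together with coercivity of $\Delta_g+h$ implies that $\Delta_g+h-(\crit-1)|u_0|^{\crit-2}$ is an isomorphism of $H_1^2(M)$, so the only non-invertibility of the full linearized operator comes from the bubble piece. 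A standard contraction mapping argument on the orthogonal complement of $\Span\{Z^0_{\xi,\mu},\ldots,Z^n_{\xi,\mu}\}$ then produces, for every small $\eps>0$ and every $(\xi,\mu)$ in a suitable range, a unique remainder $\phi_{\eps,\xi,\mu}$ solving \eqref{EqEPS} modulo this finite-dimensional subspace, with explicit smallness estimates.

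Next, I would reduce to a finite-dimensional variational problem via the reduced functional $F_\eps(\xi,\mu):=J_\eps(V_{\xi,\mu}+\phi_{\eps,\xi,\mu})$; its critical points in $(\xi,\mu)$ correspond to true solutions of \eqref{EqEPS}. The computational heart of the argument is the asymptotic expansion of $F_\eps$ as $\eps,\mu\to 0$. Using the equation for $u_0$ to cancel the quadratic cross term $\int_M(\nabla u_0\cdot\nabla W_{\xi,\mu}+hu_0 W_{\xi,\mu})\,dv_g$ against the leading interaction in $\int_M|V_{\xi,\mu}|^{\crit-\eps}\,dv_g$, and splitting integrations between the region where $W_{\xi,\mu}$ dominates and the region where $u_0$ dominates, one obtains a schematic expansion
\[
F_\eps(\xi,\mu)=J(u_0)+\tfrac{1}{n}K_n^{-n}+c_1\eps\log\mu+c_2\eps+\Phi_n(\xi)\Psi_n(\mu)+\text{l.o.t.},
\]
where $\Phi_n\Psi_n$ encodes the $u_0$--bubble coupling. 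For $n\in\{3,4,5\}$, $\Psi_n(\mu)\asymp\mu^{(n-2)/2}$ and $\Phi_n(\xi)$ is a nonzero multiple of $u_0(\xi)$; for $n=6$ the slow decay of $W_{\xi,\mu}$ forces a logarithmic factor and $\Phi_6(\xi)$ becomes proportional to $2u_0(\xi)-(c_n\Scal_g(\xi)-h(\xi))$, whose strict positivity is exactly the hypothesis of the theorem and is what ensures that $\partial_\mu F_\eps(\xi,\cdot)=0$ has a nondegenerate solution $\mu_\eps(\xi)$. Fixing $\xi$ at a point where the relevant coefficient is nonvanishing then yields the required critical point.

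The main obstacle is precisely this sharp expansion with explicit coefficients: one must handle the cross terms in $|V_{\xi,\mu}|^{\crit-\eps}$ with great care, especially in dimension $n=6$ where the logarithmic divergence of $\int_M W_{\xi,\mu}^2\,dv_g$ at the scale $\mu\to 0$ causes the contributions of $u_0$ and $c_n\Scal_g-h$ to appear at the same order and with opposite signs, explaining the dimensional hypothesis. Once $(\xi_\eps,\mu_\eps)$ is produced with $\mu_\eps\to 0$, the sign-changing property of $\ue$ is automatic: $\ue(\xi_\eps)\to-\infty$ because $W_{\xi_\eps,\mu_\eps}(\xi_\eps)\to+\infty$, while $\ue\to u_0$ in $C^2_{\mathrm{loc}}(M\setminus\{\xi_0\})$, so $\ue$ must take both signs for all sufficiently small $\eps$, and the blow-up description \eqref{model:sol:bubble} follows from the construction.
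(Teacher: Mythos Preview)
Your strategy is exactly the paper's: a Lyapunov--Schmidt reduction to a finite-dimensional functional on $M\times(0,\infty)$, followed by an asymptotic expansion that isolates, at order $\eps$, a function of $(\xi,t)$ whose critical points yield true solutions. Two points in your sketch need correction.

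First, the mechanism in dimension $n=6$ is not a logarithmic divergence. Both the interaction $\int_M u_0 W_{\xi,\mu}^{2^*-1}\,dv_g$ and the linear contribution $\int_M (h-c_n\Scal_g)W_{\xi,\mu}^2\,dv_g$ scale exactly like $\mu^2$ when $n=6$; it is this coincidence of powers (for $n\le 5$ the second term is $o(\mu^{(n-2)/2})$) that forces the combination $u_0+\tfrac12(h-c_6\Scal_g)$ to appear, not any slow decay of $W_{\xi,\mu}$.

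Second, and more substantively, ``fixing $\xi$ at a point where the relevant coefficient is nonvanishing'' does not produce a solution. A critical point of $\mu\mapsto F_\eps(\xi,\mu)$ alone only kills the dilation mode $Z^0_{\xi,\mu}$; the $n$ translation modes $Z^1_{\xi,\mu},\dots,Z^n_{\xi,\mu}$ remain, and the resulting function solves \eqref{EqEPS} only modulo $\Span\{Z^1,\dots,Z^n\}$. You need a \emph{joint} critical point in $(\xi,\mu)$. The paper obtains this by noting that the hypothesis ($u_0>0$ for $n\le 5$, respectively $u_0+\tfrac12(h-c_6\Scal_g)>0$ for $n=6$) holds on \emph{all} of $M$, so the limiting profile $\mathcal{G}(t,\xi)=c_4(n)\ln(1/t)+c_5(n)\,\Phi_n(\xi)\,t^{(n-2)/2}$ satisfies $\mathcal{G}\to+\infty$ as $t\to 0^+$ or $t\to+\infty$ uniformly in $\xi$; hence $\mathcal{J}_\eps$ attains an interior minimum on $(a,b)\times M$, which is the required joint critical point.
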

In full generality, it is not possible to construct positive blowing-up solutions to equation \eqref{EqEPS}. Indeed, in addition to the assumptions of Theorem \ref{Th2}, if we assume that $h<c_n\Scal_g$, then \eqref{eq:stab} is positively compact (Druet~\cite{Dru1} and the discussion below), and therefore any family of blowing-up solutions to \eqref{EqEPS} must be sign-changing. In the early reference~\cite{Ding}, Ding proved the existence of infinitely many nonequivalent solutions to \eqref{Eq1} on the canonical sphere, highlighting the diversity of the behavior of solutions to \eqref{eq:stab} depending on whether they are positive or negative.

%Considering the diversity of the qualitative properties of solutions to equations like \eqref{eq:stab} depending on whether they are positive or negative, an early reference is Ding~\cite{Ding} who proved the existence of infinitely many nonequivalent solutions to \eqref{Eq1} on the canonical sphere.

%In full generality, it is not possible to construct positive blowing-up solutions to equation \eqref{EqEPS} under the assumptions in Theorem \ref{Th2}. Indeed, under these assumptions, $h$ is allowed to be strictly below $c_n\Scal_g$, and in this case,  equation \eqref{eq:stab} is positively compact (Druet~\cite{Dru1}), and therefore any family of blowing-up solutions to \eqref{EqEPS} must be sign-changing. 

\medskip\noindent The nondegeneracy assumption in Theorem \ref{Th2} is necessary. We refer to Proposition \ref{prop:cns}  in Section \ref{sec:hyp:nondege} for the proof of necessity. However, the nondegeneracy assumption of Theorem \ref{Th2} is generic in the sense that any degenerate solution to \eqref{eq:stab} can be approached by a solution of a slight perturbation of \eqref{eq:stab}. We refer to Proposition \ref{prop:gene:h} of Section \ref{sec:hyp:nondege} for the precise genericity statement.

\medskip\noindent The above theorem outlines a role of the geometry in dimension $n=6$. In higher dimension $n\geq 7$, the geometry of $(M,g)$ is more present. When the potential is strictly below the scalar curvature (that is $h<c_n\Scal_g$), equation \eqref{eq:stab} is compact for $n\geq 7$, at least in the locally conformally flat case (V\'etois~\cite{Vet}). Conversely, if $h(x_0)>c_n\Scal_g(x_0)$ for some $x_0\in M$, then under some additional nondegeneracy assumption, equation \eqref{eq:stab} is non-compact when $n\geq 7$ (see Pistoia--V\'etois~\cite{PisVet} for general results). Our second result is in the case $h\equiv c_n\Scal_g$:

\begin{theorem}[\bf dimensions $3\leq n\leq 9$ and $h\equiv c_n \Scal_g$]\label{Th1}%\samepage
Let $\(M,g\)$ be a smooth compact Riemannian manifold of dimension $3\leq n\leq 9$ with positive Yamabe invariant. Assume that there exists a nondegenerate positive solution $u_0>0$ to the Yamabe equation \eqref{Eq1}. Assume that $h\equiv c_n \Scal_g$. Then for $\varepsilon>0$ small, equation \eqref{EqEPS} admits a sign-changing solution $u_\varepsilon$ of type $(u_0-B)$. In particular, the family $(\ue)_{\eps>0}$ blows up as $\varepsilon\to0$ and \eqref{Eq1} is noncompact.
\end{theorem}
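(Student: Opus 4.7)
The plan is a Lyapunov--Schmidt finite-dimensional reduction. Following the ansatz \eqref{model:sol:bubble}, I would look for $\ue = u_0 - W_{(\xi,\mu)} + \phi_\eps$, where $W_{(\xi,\mu)}$ is a projection onto $M$ of a standard Euclidean bubble $B_{\xi,\mu}$ of scale $\mu>0$ centered near $\xi\in M$ (read through the exponential chart), and $\phi_\eps$ is a small correction lying in the orthogonal complement $K_{(\xi,\mu)}^\perp$ of the approximate kernel $K_{(\xi,\mu)}:=\Span\{\partial_\mu W_{(\xi,\mu)},\partial_{\xi^1} W_{(\xi,\mu)},\ldots,\partial_{\xi^n} W_{(\xi,\mu)}\}$. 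The linearization
$$L_\eps\phi := \Delta_g\phi + c_n\Scal_g\phi - (\crit-1-\eps)|u_0 - W_{(\xi,\mu)}|^{\crit-2-\eps}\phi$$
splits into two regimes: away from $\xi$ it is essentially the linearization at $u_0$, invertible by the nondegeneracy hypothesis, while after rescaling by $\mu$ near $\xi$ it converges to the linearization of the Euclidean critical equation at a standard bubble, whose only kernel is the $(n+1)$-dimensional scaling/translation kernel. A standard gluing between these two regions yields a uniform a priori estimate for $L_\eps$ on $K_{(\xi,\mu)}^\perp$.

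The auxiliary nonlinear equation is then solved by Banach contraction. Writing \eqref{EqEPS} as $L_\eps\phi = -R_\eps + N_\eps(\phi)$ with $R_\eps$ the error of the approximate solution and $N_\eps$ a Lipschitz-controlled superlinear remainder, the estimate $\|R_\eps\| = \O\bigl(\eps|\log\mu| + \mu^{(n-2)/2}\bigr)$ produces a unique $\phi_\eps = \phi_\eps(\xi,\mu)\in K_{(\xi,\mu)}^\perp$ of comparable size, depending smoothly on the bubble parameters. Inserting $\phi_\eps$ into the energy
$$J_\eps(u) = \frac12\int_M \bigl(|\nabla u|_g^2 + c_n\Scal_g u^2\bigr)\,dv_g - \frac{1}{\crit-\eps}\int_M |u|^{\crit-\eps}\,dv_g$$
yields a finite-dimensional reduced functional $\widetilde J_\eps(\xi,\mu)$ whose critical points lift to genuine solutions of \eqref{EqEPS}.

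The core is the expansion of $\widetilde J_\eps$. Using $\Delta_g u_0 + c_n\Scal_g u_0 = u_0^{\crit-1}$ to cancel the contributions linear in $W_{(\xi,\mu)}$, one arrives at an expansion of the form
$$\widetilde J_\eps(\xi,\mu) = J_0(u_0) + c_\infty + a_n\,\eps\log\frac{1}{\mu} - b_n\,u_0(\xi)\,\mu^{(n-2)/2} + \o\bigl(\eps + \mu^{(n-2)/2}\bigr),$$
where $c_\infty$ is the Euclidean self-energy of a bubble and $b_n>0$ is an explicit interaction constant coming from $\int_M u_0^{\crit-1} W_{(\xi,\mu)}$. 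Solving $\partial_\mu\widetilde J_\eps\simeq0$ selects a curve $\mu = \mu_\eps(\xi)\sim(\eps/u_0(\xi))^{2/(n-2)}$ along which $\widetilde J_\eps$ is stationary in $\mu$, and its restriction is, to leading order, a monotone function of $u_0(\xi)$. Compactness of $M$ then delivers a critical point $\xe\in M$ (for instance, a maximum of $u_0$); the associated $\ue$ blows up at $\xe$ since $W_{(\xe,\mu_\eps)}(\xe)\sim\mu_\eps^{-(n-2)/2}\to+\infty$, changes sign because $u_0$ and $\phi_\eps$ remain bounded, and is of type $(u_0-B)$ by construction.

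The main obstacle is ensuring the expansion above is sharp enough for the $u_0$-interaction term to dominate. Subleading geometric self-corrections of the bubble energy are of order $\mu^4|\Weyl_g(\xe)|^2$ in dimension $n\geq7$ (with smaller analogues in low dimensions), and these sit strictly below $\mu^{(n-2)/2}$ precisely when $(n-2)/2<4$, i.e., $n\le9$; this accounts for the dimensional restriction in the theorem. Choosing the projection $W_{(\xi,\mu)}$ via a suitable linear elliptic problem on $M$ and obtaining sharp pointwise control of $W_{(\xi,\mu)} - B_{\xi,\mu}$, so as to extract the interaction term $u_0(\xi)\mu^{(n-2)/2}$ cleanly, is where the real technical effort concentrates.
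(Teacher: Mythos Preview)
Your overall strategy matches the paper's: Lyapunov--Schmidt reduction with the ansatz $u_0-W+\phi$, invertibility of the linearization on $K^\perp$ from nondegeneracy of $u_0$ together with the Bianchi--Egnell kernel, contraction for $\phi$, expansion of the reduced energy, and the dimensional cutoff via $\mu^4$-order geometric corrections against the $\mu^{(n-2)/2}$ interaction.

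There is, however, a sign error that breaks your variational step. The surviving cross term in $J_\eps(u_0-W)$ is $+\int_M u_0\,W^{\crit-1}\,dv_g\sim +\,b_n\,u_0(\xi)\,\mu^{(n-2)/2}$ with $b_n>0$; the integral $\int_M u_0^{\crit-1}W$ that you name as its source cancels against $\langle u_0,W\rangle_h$ since $u_0$ solves \eqref{Eq1}, up to an $O(\eps)$ correction that is lower order. With your sign the map $\mu\mapsto a_n\eps\ln(1/\mu)-b_n u_0(\xi)\mu^{(n-2)/2}$ is strictly decreasing, so the equation $\partial_\mu\widetilde J_\eps\simeq0$ you invoke has no solution and the argument stalls. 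With the correct positive sign (writing $\mu=t\,\eps^{2/(n-2)}$) the profile $\mathcal{G}(t,\xi)=c_4\ln(1/t)+c_5\,u_0(\xi)\,t^{(n-2)/2}$, $c_4,c_5>0$, is coercive at both ends in $t$; one minimizes jointly over $(t,\xi)\in(0,\infty)\times M$, and blow-up occurs at a \emph{minimum} of $u_0$, not a maximum---this is exactly what the paper does (Proposition~\ref{Pr2} and Section~\ref{sec:pfs}). A secondary point: your bound $\|R_\eps\|=O(\eps|\ln\mu|+\mu^{(n-2)/2})$ is too optimistic for $n\ge7$; in the non-conformally-flat case the dominant error is $\|hW+(\Delta_g-\Delta_{\Eucl})W\|_{2n/(n+2)}=O(\mu^2)$ (Lemma~\ref{Lem4}), and it is $\|\phi\|^2=O(\mu^4)=o(\eps)$ that closes the expansion exactly for $n\le9$.
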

Constructing positive blowing-up solutions is not possible in this context. Indeed, for $3\leq n\leq 9$ and except for the canonical sphere, the scalar curvature equation \eqref{Eq1} is positively compact (see Li--Zhu~\cite{LiZhu}, Druet~\cite{Dru2}, Marques~\cite{Mar}, Li--Zhang~\cites{LiZha1,LiZha3}, Khuri--Marques--Schoen~\cite{KhuMarSch}, and the discussion below). We refer also to Druet--Hebey~\cite{dh:analysis:pde}  and Druet--Hebey--V\'etois~\cite{dhv} for the extension of compactness issues to stability issues.

%As stated in Theorem \ref{Th1}, the solutions we construct change sign. It is natural to ask if there exist positive blowing-up solutions to \eqref{EqEPS} under the assumptions in Theorem \ref{Th1}. The answer is negative. Indeed, in this context (except for the canonical sphere), the scalar curvature equation \eqref{Eq1} is positively compact and therefore, any blowing-up family of solutions must be sign-changing (see Li-Zhu~\cite{LiZhu}, Druet~\cite{Dru2}, Marques~\cite{Mar}, Li-Zhang~\cites{LiZha1,LiZha3}, Khuri-Marques-Schoen~\cite{KhuMarSch}, and the discussion below).

\medskip\noindent The restriction of the dimensions in Theorem \ref{Th1} is due to the geometry of the manifold. We refer to Subsection \ref{subsec:10} in Section \ref{sec:misc} for the extension of Theorem \ref{Th1} to dimension $n=10$ in general and in any dimension in the locally conformally flat case. 

\medskip\noindent Here again, it is natural to ask about the nondegeneracy assumption of a solution to the limit equation in Theorem \ref{Th1}: actually, it is both a necessary and a generic assumption. Concerning necessity, on the standard sphere (where all positive solutions to \eqref{Eq1} are degenerate), it is not possible to construct blowing-up solutions of type $(u_0-B)$, see Proposition \ref{prop:cns} in Section \ref{sec:hyp:nondege}. However, it is proved in Khuri--Marques--Schoen~\cite{KhuMarSch} that the nondegeneracy assumption is generic for the Yamabe equation \eqref{Eq1}, at least in dimensions $n\leq 24$, see Proposition \ref{prop:gene:geom} in Section \ref{sec:hyp:nondege}.

%\medskip\noindent Allowing function $\ue$ to change sign yields a degree of freedom that changes the general behavior regarding compactness/noncompactness issues. The first historical contribution in this direction is due to Ding~\cite{Ding}: there exists a family $(\ue)_\eps\in C^{2,\theta}(\mathbb{S}^n)$ of blowing-up solutions to \eqref{Eq1} on $(\mathbb{S}^n,\can)$, and they are not conformally equivalent to each other. This is a striking difference with the case of positive solutions: indeed, Lelong--Ferrand~\cite{ferrand} and Obata~\cite{obata} proved that the positive solutions to \eqref{Eq1} on $(\mathbb{S}^n,\can)$ are all conformally equivalent (and explicit). The abundance of non-equivalent sign-changing solutions makes the analysis different for the sign-changing case.

\medskip\noindent Here is a brief overview of the positive compactness results known so far for equations like \eqref{eq:stab}.

\medskip\noindent In 1987, Schoen~\cite{Sch2} adressed the question of positive compactness of equation \eqref{Eq1} for manifolds non conformally diffeomorphic to the canonical sphere $(\mathbb{S}^n,\can)$ (say aspherical manifolds). The known results are the following: positive compactness holds for aspherical locally conformally flat manifolds (Schoen~\cites{Sch2,Sch3}) and for arbitrary aspherical manifolds of dimension $3\leq n\leq 24$ (Li--Zhu~\cite{LiZhu} ($n=3$), Druet~\cite{Dru2} ($n\leq 5$), Marques~\cite{Mar} ($n\leq 7$), Li--Zhang~\cites{LiZha1,LiZha3,LiZha4} ($n\leq 11$), Kuhri--Marques--Schoen~\cite{KhuMarSch} ($n\leq 24$)). But positive compactness does not hold in general in dimension $n\geq 25$ (There are blowing-up examples by Brendle~\cite{Bre} and Brendle--Marques~\cite{BreMar}). Combining these results with Theorem \ref{Th2}, we get that equation \eqref{Eq1} is positively compact, but not compact, at least when $3\leq n\leq 9$.

\medskip\noindent When $h\not\equiv c_n\Scal_g$, the situation is different. When $h<c_n\Scal_g$, Druet~\cite{Dru1} proved that  \eqref{eq:bis} is positively compact in dimension $n\geq 3$ (see also Li--Zhu~\cite{LiZhu} and Druet--Hebey--V\'etois~\cite{dhv} for $n=3$). Conversely, in dimension $n\geq 4$, Micheletti-Pistoia--V\'etois~\cite{MicPisVet} and Pistoia--V\'etois~\cite{PisVet} proved that if $h$ is above $c_n\Scal_g$ somewhere, then, under some some nondegeneracy assumption, equation \eqref{eq:stab} is not positively compact. On the canonical sphere, there are blowing-up positive solutions with arbitrarily high energy when $h\equiv Cte>c_n\Scal_{\can}$ (Chen--Wei--Yan~\cite{chenweiyan} for $n\geq 5$). We refer to Esposito--Pistoia--V\'etois~\cite{EspPisVet} for blowing-up positive solutions in case of a potential $h$ depending on $\varepsilon$ and approaching $c_n\Scal_g$, and to Hebey--Wei~\cite{hw} for the construction of multi-peak solutions on the three-sphere with a potential approaching constants arbitrarily larger than the scalar curvature. Here again, combining Druet~\cite{Dru1} and Theorem \ref{Th1} yields the following: when $h<c_n\Scal_g$ and $3\leq n \leq 5$, equation \eqref{eq:stab} is positively compact, but not compact.

%\medskip\noindent When $q=\crit$, equation \eqref{Eq2} is noncompact in general, as is shown by the following example. On the flat space $(\rn, \Eucl)$, for any $\mu>0$ and $x_0\in\rn$, we define 
%\begin{equation}\label{eq:umuxo}
%U_{\mu,x_0}(x):=\left(\frac{\sqrt{n(n-2)}\mu}{\mu^2+|x-x_0|^2}\right)^{\frac{n-2}{2}}=\mu^{-\frac{n-2}{2}}U_{1,0}\left(\frac{x-x_0}{\mu}\right)\hbox{ for all } x\in\rn.
%\end{equation}
%The function $U_{\mu,x_0}$ satisfies $\Delta_{\Eucl}U_{\mu,x_0}=U_{\mu,x_0}^{\crit-1}$ for all $\mu>0$ and $x_0\in \rn$: we then get a family of solutions $(U_{\mu,x_0})_{\mu>0,x_0\in\rn}$ to the same equation such that $\lim_{\mu\to 0}\Vert U_{\mu,x_0}\Vert_\infty=+\infty$ with constant Dirichlet energy. Via stereographic projection, this example transfers to the standard sphere $(\mathbb{S}^n,\can)$ and we recover a family $(\ue)_\eps$ of positive solutions to \eqref{Eq1}  such that $\lim_{\eps\to 0}\Vert \ue \Vert_\infty=+\infty$. In particular, equation \eqref{Eq1} is noncompact on $(\mathbb{S}^n,\can)$.

\medskip\noindent The proofs of Theorems~\ref{Th2} and~\ref{Th1} rely on a Lyapunov--Schmidt reduction. Over the past two decades, there has been intensive developments in Lyapunov--Schmidt reductions applied to critical elliptic equations. In addition to the references in the geometric context of a Riemannian manifold cited above, an early reference for single-bubble solutions is Rey~\cite{Rey}. Possible references on the construction of blowing-up solutions to equations like \eqref{EqEPS} by glueing a fixed function to bubbles are del Pino--Musso--Pacard--Pistoia~\cites{delPMusPacPis1,delPMusPacPis2} and Guo--Li--Wei~\cite{GuoLiWei} (for the Yamabe equation on the canonical sphere) and Wei--Yan~\cite{WeiYan} (for a Lazer-McKenna type problem). The list of constributions above does not pretend to exhaustivity: we refer to the references of the above papers and also to the monograph~\cite{AmbMal} by Ambrosetti--Malchiodi for further bibliographic complements. Our paper is organized as follows. In Section \ref{sec:misc}, we discuss extensions and generalizations of the above theorems. In Section \ref{sec:hyp:nondege}, we discuss the nondegeneracy assumption. The finite dimensional reduction is performed in Section \ref{sec:finite}. The reduced problem is studied in Section \ref{sec:reduced}. Theorems \ref{Th2} and \ref{Th1} are proved in Section \ref{sec:pfs}. The proof of the error estimate is postponed to Section \ref{sec:error}.

\medskip\noindent{\bf Acknowledgements:} The authors express their gratitude to Emmanuel Hebey and Lionel B\'erard-Bergery for fruitful comments on this work.

\section{Miscellaneous extensions}\label{sec:misc}

\subsection{About the critical dimension $n=10$ in Theorem \ref{Th1}}\label{subsec:10}
As mentioned in the introduction, the method developed here fails to produce blowing-up solutions to \eqref{EqEPS} in higher dimension. Indeed, in dimensions $n\geq 7$, a term involving the Weyl tensor appear in the Taylor expansion \eqref{eq:exp:10} of the Lyapunov--Schmidt functional. In dimension $n<10$, this term is dominated by the contribution of $u_0$. In dimension $n=10$, there is a competition between the Weyl tensor and $u_0$, and one gets the following result:

\begin{theorem}[\bf dimension $n=10$ and $h\equiv c_n \Scal_g$]\label{Th:dim10}
Let $\(M,g\)$ be a smooth compact Riemannian manifold of dimension $n=10$ with positive Yamabe invariant. Assume that there exists a nondegenerate positive solution to the Yamabe equation \eqref{Eq1}. Assume that $h\equiv c_n \Scal_g$ and that $u_0>\frac{5}{567}|\Weyl_g|_g^2$. Then for $\varepsilon>0$ small, equation \eqref{EqEPS} admits a sign-changing solution $u_\varepsilon$ of type $(u_0-B)$. In particular, the family $(\ue)_{\eps>0}$ blows up as $\varepsilon\to0$.
\end{theorem}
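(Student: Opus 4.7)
The proof follows the same Lyapunov--Schmidt scheme as Theorem \ref{Th1}, with the ansatz $\ue = u_0 - B_{\delta,\xi} + \phi_\eps$, where $B_{\delta,\xi}$ is a standard Aubin--Talenti bubble of parameter $\delta>0$ centered at $\xi\in M$, and $\phi_\eps=\phi_\eps(\delta,\xi)$ is the correction produced by the finite-dimensional reduction of Section \ref{sec:finite}. Finding a solution to \eqref{EqEPS} of type $(u_0-B)$ reduces to finding a critical point of a reduced functional $J_\eps(\delta,\xi)$.

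The novelty in dimension $n=10$ is that the Weyl tensor enters the Taylor expansion of $J_\eps$ at the \emph{same} order of $\delta$ as the interaction with $u_0$. Indeed, the cubic interaction $\int u_0 B_{\delta,\xi}^{\crit-1}$ contributes a term of order $\delta^{(n-2)/2}$, while the non-conformal-flatness of $g$ (visible as the $r^4$ coefficient of the metric in conformal normal coordinates at $\xi$) contributes a term of order $\delta^4$; these orders coincide precisely when $n=10$. The expansion \eqref{eq:exp:10} recorded in Section \ref{sec:reduced} therefore takes the shape
\begin{equation*}
J_\eps(\delta,\xi) = c_\infty + \eps\, a_1\log\delta - \delta^4\bigl(a_2\, u_0(\xi) - a_3\, |\Weyl_g(\xi)|_g^2\bigr) + \o(\delta^4+\eps),
\end{equation*}
for explicit positive constants $a_1,a_2,a_3$ obtained as integrals of the Aubin--Talenti profile $U$ in $\rr^{10}$. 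The numerical identity $a_3/a_2=5/567$ is the origin of the threshold in the statement.

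Under the hypothesis $u_0 > \tfrac{5}{567}|\Weyl_g|_g^2$, the continuous function
\begin{equation*}
\Psi(\xi) := a_2\, u_0(\xi) - a_3\, |\Weyl_g(\xi)|_g^2
\end{equation*}
is strictly positive on the compact manifold $M$. For each fixed $\xi$, one-dimensional optimization of $\eps\, a_1 \log\delta - \Psi(\xi)\delta^4$ in $\delta$ yields a unique critical value $\delta_\eps(\xi)=\bigl(\eps\, a_1/(4\Psi(\xi))\bigr)^{1/4}$. Substituting back produces a $C^1$ function on $M$ whose leading behavior is a positive multiple of $\Psi(\xi)^{1/2}\eps\log(1/\eps)$. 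Since $\Psi>0$ and $M$ is compact, maximizing on $M$ gives a critical point $\xi_\eps$; the same perturbation argument used in Section \ref{sec:pfs} then shows that the pair $(\delta_\eps(\xi_\eps),\xi_\eps)$ persists as a critical point of the full reduced functional $J_\eps$, and hence $\ue = u_0 - B_{\delta_\eps(\xi_\eps),\xi_\eps} + \phi_\eps$ is a genuine sign-changing solution of \eqref{EqEPS}.

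The main technical obstacle is the precise identification of the constants, and in particular the ratio $a_3/a_2=5/567$. The constant $a_2$ reduces to $\int_{\rr^{10}} U^{(n+2)/(n-2)}$ at $n=10$ and is routine. The constant $a_3$ demands a careful fourth-order Taylor expansion of $g$ in conformal normal coordinates at $\xi$ (where, by the Lee--Parker expansion, the coefficient of $r^4$ in the metric is a specific multiple of $|\Weyl_g(\xi)|_g^2$), followed by the evaluation of a concrete integral of type $\int_{\rr^{10}} |y|^4 U(y)^{\crit}$ against the Aubin--Talenti profile. Once those integrations are executed, all remaining steps---the contraction mapping yielding $\phi_\eps$, the $C^0$ control of the error (Section \ref{sec:error}), and the passage from critical points of $J_\eps$ to solutions of \eqref{EqEPS}---are local adaptations of the proof of Theorem \ref{Th1}.
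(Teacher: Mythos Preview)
Your approach is essentially that of the paper: Lyapunov--Schmidt reduction with the bubble written in Lee--Parker conformal normal coordinates, leading to the expansion \eqref{eq:exp:10} in which the Weyl and $u_0$ contributions compete at order $\delta^4$ when $n=10$, and then location of a critical point of the reduced functional. Two points deserve correction, though neither is fatal.

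First, the signs in your displayed expansion are flipped relative to \eqref{eq:exp:10}: the correct form (with positive constants $a_i$) is
\[
\mathcal J_\eps = c_\infty - \eps\, a_1\log\delta + \delta^4\bigl(a_2\, u_0(\xi)-a_3\,|\Weyl_g(\xi)|_g^2\bigr) + \o(\eps+\delta^4),
\]
so one looks for a minimum rather than a maximum. This is harmless for the existence argument since either way there is a unique interior critical point in $\delta$.

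Second, your claim that after optimizing in $\delta$ the leading behavior is ``a positive multiple of $\Psi(\xi)^{1/2}\eps\log(1/\eps)$'' is incorrect: substituting $\delta_\eps(\xi)=(\eps a_1/(4\Psi(\xi)))^{1/4}$ back gives $(\eps a_1/4)\log\eps + \O(\eps)$, whose leading part is $\xi$-independent; the $\xi$-dependence enters only at order $\eps$, as a constant multiple of $\log\Psi(\xi)$. This does not actually break your argument, since any continuous function on the compact $M$ attains its extremum regardless of the precise form of the leading term. The paper sidesteps this two-step optimization altogether and, exactly as in the proof of Theorem~\ref{Th1}, directly minimizes $\mathcal G(t,\xi)=c_4\ln(1/t)+c_5\,\Psi(\xi)\,t^4$ over $(a,b)\times M$ for suitable $0<a<b$, using that $\mathcal G\to+\infty$ as $t\to0$ and $t\to+\infty$ uniformly in $\xi$ (since $\Psi>0$ on the compact $M$ by hypothesis); this is cleaner and meshes immediately with the $C^0$-convergence $\mathcal J_\eps\to\mathcal G$ already established in Proposition~\ref{Pr2}.
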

In dimension $n>10$, the Weyl tensor dominates the contribution of $u_0$, and it is not possible to produce blowing-up solutions in general (see the explicit Taylor expansion \eqref{eq:exp:10} in Section \ref{sec:pfs}). However, in the locally conformally flat case, that is when the Weyl tensor vanishes (at least in dimension larger than four), one gets the following result:
\begin{theorem}[\bf the locally conformally flat case in any dimension]\label{Th:lcf}
Let $\(M,g\)$ be a smooth compact Riemannian manifold of dimension $n\ge3$ with positive Yamabe invariant. Assume that there exists a nondegenerate positive solution to the Yamabe equation \eqref{Eq1}. Assume that $(M,g)$ is locally conformally flat and that $h\equiv c_n \Scal_g$. Then for $\varepsilon>0$ small, equation \eqref{EqEPS} admits a sign-changing solution $u_\varepsilon$ of type $(u_0-B)$. In particular, the family $(\ue)_{\eps>0}$ blows up as $\varepsilon\to0$.
\end{theorem}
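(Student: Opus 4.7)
The plan is to apply the same Lyapunov--Schmidt reduction scheme used to establish Theorems \ref{Th1} and \ref{Th:dim10}, now specialized to the locally conformally flat setting. The crucial new input is that $\Weyl_g \equiv 0$, so the obstruction that prevents blow-up in general for $n > 10$ is absent and the argument becomes uniform in the dimension.

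I would use the ansatz $\ue = u_0 - B_\eps + \phi_\eps$, where $B_\eps$ is a bubble concentrated at $\xe \in M$ with scale $\delta_\eps \to 0$ and $\phi_\eps$ is a remainder belonging to the orthogonal complement of the approximate kernel of the linearized operator at $-B_\eps$. The locally conformally flat assumption provides, around any base point, a chart in which a conformal metric $\tilde g = \varphi^{4/(n-2)} g$ is Euclidean. By conformal invariance of $\Delta_g + c_n \Scal_g$ and of the critical nonlinearity, the transplanted standard Aubin--Talenti profiles form an exact family of positive solutions to the Yamabe equation on the flat representative; pulled back to $(M, g)$, they therefore approximate \eqref{EqEPS} with a very small error in which no geometric (curvature) correction appears in a neighborhood of the concentration point. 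The only sources of error are thus the subcritical perturbation $\eps$ and the interaction with $u_0$.

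The coercivity of $\Delta_g + h$ together with the nondegeneracy of $u_0$ then produce, through the standard contraction-mapping argument of Section \ref{sec:finite}, a unique $\phi_\eps = \phi_\eps(\xe, \delta_\eps)$ satisfying sharp quantitative bounds. The problem reduces to finding critical points of a finite-dimensional functional $J_\eps(\xi, \delta)$ on $M \times (0, \delta_0)$, whose asymptotic expansion, produced as in Section \ref{sec:reduced}, takes in the locally conformally flat case a form in which the $|\Weyl_g|_g^2$ term appearing in the general expansion \eqref{eq:exp:10} vanishes identically: the only relevant contributions are a bubble energy constant, a subcritical correction of order $\eps$, and an interaction term of the shape $c\, u_0(\xi)\, \delta^{\alpha}$ with $c > 0$ and $\alpha > 0$ depending on $n$. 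Since $u_0 > 0$ everywhere, the signs of these terms allow one to solve $\partial_\delta J_\eps = 0$ for a scale $\delta_\eps(\xi)$ and then minimize in $\xi \in M$, producing an interior critical point $(\xi_\eps, \delta_\eps)$ of $J_\eps$ that lifts to the desired sign-changing blowing-up solution of type $(u_0 - B)$.

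The main obstacle is the $C^1$ error estimate of Section \ref{sec:error}: the remainder $\phi_\eps$ and its derivatives in $(\xi, \delta)$ must be controlled in a weighted norm strong enough that the above leading-order expansion of $J_\eps$ is not swamped by the error, uniformly in the dimension $n$. This is precisely what the locally conformally flat assumption makes possible in every dimension, by eliminating the curvature-induced obstructions that are responsible for the dimensional restrictions in Theorems \ref{Th1} and \ref{Th:dim10}.
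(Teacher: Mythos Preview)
Your proposal is correct and follows essentially the same Lyapunov--Schmidt route as the paper: the same ansatz, the same observation that $\Weyl_g\equiv 0$ kills the curvature obstruction in the expansion, and the same conclusion via a critical point of the reduced functional.

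One simplification is worth noting. You flag the $C^1$ error estimate on $\phi_\eps$ as ``the main obstacle,'' but the paper avoids it entirely. Only $C^0$ convergence of the reduced functional is established (Proposition~\ref{Pr2}): on compact subsets of $(0,\infty)\times M$,
\[
\frac{1}{\eps}\big(\mathcal{J}_\eps(t,\xi)-c_1-c_2\eps-c_3\eps\ln\eps\big)\longrightarrow \mathcal{G}(t,\xi)=c_4(n)\ln\tfrac{1}{t}+c_5(n)\,t^{\frac{n-2}{2}}u_0(\xi).
\]
Since $u_0>0$, $\mathcal{G}\to+\infty$ as $t\to 0$ and $t\to\infty$, uniformly in $\xi$; hence $\mathcal{J}_\eps$ has an interior minimizer on some fixed $[a,b]\times M$, automatically a critical point. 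The lift from critical points of $\mathcal{J}_\eps$ to solutions of \eqref{EqEPS} is handled by a separate algebraic argument (end of the proof of Proposition~\ref{Pr2}) showing the Lagrange multipliers vanish, which does not require differentiating the expansion. So your two-step scheme (solve $\partial_\delta J_\eps=0$, then minimize in $\xi$) and the accompanying $C^1$ control can be replaced by direct joint minimization using only the $C^0$ expansion.
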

Examples of manifolds and metrics satisfying the hypothesis of Theorem \ref{Th:lcf} are in Proposition \ref{prop:ex:lcf}. As stated in Theorems \ref{Th:dim10} and \ref{Th:lcf}, the solutions we construct change sign. Here again, it is natural to ask if there exist positive blowing-up solutions to \eqref{EqEPS} under the assumptions of Theorems \ref{Th:dim10} and \ref{Th:lcf}. The answer is negative. Indeed, it follows from positive compactness theorems (Khuri--Marques--Schoen~\cite{KhuMarSch} and Schoen~\cites{Sch2,Sch3}) that positive blowing-up solutions to equation \eqref{EqEPS} do not exist in the locally conformally flat and aspherical case. A consequence of the above results is that the Yamabe equation \eqref{Eq1} is positively compact, but not compact in the context of Theorems \ref{Th:dim10} and \ref{Th:lcf}.

\subsection{Positive blowing-up solutions in dimension $n=6$}\label{subsec:6}
In this subsection, we focus on positive solutions to \eqref{EqEPS}. A direct offshot of the techniques developed for the proof of Theorem \ref{Th2} yields the following result:

\begin{theorem}[\bf positive solutions in dimension $n=6$]\label{Th:dim6}
Let $\(M,g\)$ be a smooth compact Riemannian manifold of dimension $n=6$ and let $h\in C^{0,\theta}(M)$ ($\theta\in (0,1)$) be such that $\Delta_g+h$ is coercive. Assume that there exists a nondegenerate solution $u_0\in C^{2,\theta}(M)$ to equation \eqref{eq:stab} and that
$$h-c_6\Scal_g>2u_0>0\quad\text{in }M\,.$$
Then for $\varepsilon>0$ small, equation \eqref{EqEPS} admits a positive solution $u_\varepsilon>0$ such that $\ue=u_0+B_{\eps}+o(1)$, where $(B_{\eps})_\eps$ is a bubble and $\lim_{\eps\to 0}o(1)=0$ in $H_1^2(M)$.
\end{theorem}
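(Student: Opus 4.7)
The proof follows the Lyapunov--Schmidt strategy developed for Theorem \ref{Th2} in dimension $n=6$, with the ansatz modified from $u_0 - B_\varepsilon$ to
$$u_\varepsilon = u_0 + B_{\delta, x_0} + \phi_\varepsilon,$$
where $B_{\delta, x_0}$ is a standard bubble of type \eqref{def:bubble} centered at $x_0 \in M$ with concentration parameter $\delta = \delta_\varepsilon > 0$, and $\phi_\varepsilon$ is a small remainder orthogonal to the tangent space of the manifold of bubbles at $(x_0, \delta)$. Since the sign in front of $B_\varepsilon$ does not affect the coercivity of $\Delta_g + h$ or the nondegeneracy of $u_0$, the finite-dimensional reduction of Section \ref{sec:finite} applies verbatim and produces such a $\phi_\varepsilon$ with the same error estimates proved in Section \ref{sec:error}.

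Carrying out the asymptotic expansion of the reduced functional as in the $n=6$ case of Theorem \ref{Th2}, the computation is identical except that the sign in front of the bubble inverts the cross-terms between $u_0$ and $B_\varepsilon$ coming from the critical nonlinearity (informally, near the peak $u_0 + B_\varepsilon \approx B_\varepsilon + u_0$ while $u_0 - B_\varepsilon \approx -(B_\varepsilon - u_0)$, so the quadratic correction to $|u|^{2^\star-2-\varepsilon}u$ in $u_0$ changes sign). As a result, the sharp pointwise sign condition driving the existence of a critical concentration parameter $\delta_\varepsilon \to 0$ becomes
$$h(x_0) - c_6 \Scal_g(x_0) > 2 u_0(x_0)$$
instead of $c_6 \Scal_g(x_0) - h(x_0) < 2 u_0(x_0)$. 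Under the stated hypothesis this strict inequality holds for every $x_0 \in M$, so one can, for instance, take $x_0$ to be a minimizer of $h - c_6 \Scal_g - 2 u_0$ on $M$; the remaining reduced equation is then solvable by routine arguments identical to those in Section \ref{sec:pfs}.

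The positivity of $u_\varepsilon = u_0 + B_\varepsilon + \phi_\varepsilon$ follows by combining $u_0 > 0$ (by assumption), $B_\varepsilon > 0$ (by construction), and $\phi_\varepsilon = \o(1)$ in $H_1^2(M)$, which yields $C^0$-smallness away from the concentration point through standard elliptic regularity applied to the equation \eqref{EqEPS} that $u_\varepsilon$ solves; near $x_0$, the bubble dominates and is large and positive, so $u_\varepsilon > 0$ there as well. The main technical obstacle is the same as in the $n=6$ part of Theorem \ref{Th2}, namely the careful tracking of the sign and coefficient of the $u_0$-cross-term in the expansion of the reduced functional, since dimension six sits exactly at the borderline where the contribution of $u_0$ and the contribution of the potential $h - c_6 \Scal_g$ compete at the same order. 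Once the sign-flip in this cross-term is correctly identified, the construction is mechanical.
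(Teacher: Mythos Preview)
Your proposal is correct and follows the same route as the paper: flip the sign of the bubble in the ansatz, redo the $n=6$ expansion of the reduced functional, and observe that the coefficient of $t^2$ becomes $\tfrac{1}{2}(h-c_6\Scal_g)-u_0$, positive under the stated hypothesis, so that $\mathcal{G}(t,\xi)\to+\infty$ at both ends in $t$ and a minimizer exists.

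The one technical point on which the paper differs is how positivity of $u_\varepsilon$ is secured. Rather than arguing a posteriori from $u_0>0$, $B_\varepsilon>0$ and smallness of $\phi_\varepsilon$, the paper performs the reduction for the truncated functional
\[
J^+_\varepsilon(u):=\frac{1}{2}\int_M|\nabla u|_g^2\,dv_g+\frac{1}{2}\int_M hu^2\,dv_g-\frac{1}{2^*-\varepsilon}\int_M u_+^{2^*-\varepsilon}\,dv_g,
\]
whose critical points solve $\Delta_g u+hu=u_+^{2^*-1-\varepsilon}$; testing against $u_-$ and using coercivity of $\Delta_g+h$ gives $u_-\equiv0$, and the strong maximum principle then yields $u>0$. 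Since $u_0+W_{\delta,\xi}>0$, the expansion of $J^+_\varepsilon$ coincides with that of $J_\varepsilon$ and nothing else changes. Your direct argument is also legitimate, but as written it leaves implicit the $L^\infty$ control of $\phi_\varepsilon$ on the neck region $d_g(\cdot,\xi_\varepsilon)\sim\sqrt{\delta_\varepsilon}$, where neither the far-field $C^2_{\mathrm{loc}}$ convergence to $u_0$ nor the rescaled convergence to $U$ applies directly; this can be filled in, but the $J^+$ device avoids the issue entirely.
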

This result is a complement to a specific 6-dimensional result: Druet (\cite{Dru1} and private communication) showed that blow-up for positive solutions to \eqref{EqEPS} with bounded energy necessarily occurs at points $x\in M$ such that $(h-c_6\Scal_g)(x)\geq 2u_0(x)$. Dimension six is critical when considering positive blowing-up solutions with nontrivial weak limit $u_0>0$. More precisely, the blow-up analysis shows that there is  balance between the contributions of $u_0$ and $h-c_n\Scal_g$: one of the terms dominates the other when $n\neq 6$, and they compete at the same growth when $n=6$. We refer to the Taylor expansion \eqref{Pr2Eq1} and to~\cite{Dru1} to outline this phenomenon. We refer to Druet--Hebey~\cite{dh:analysis:pde} for an extensive discussion on dimension six.

\subsection{Prescription of the blow-up point}\label{subsec:presc}
The above theorems show the existence of blowing-up families of solutions, but the blow-up point (that is the point where the bubble is centered) is not prescribed. The only information obtained from the construction is that blow-up occurs at a minimum point of $u_0$ (for Theorems \ref{Th2} and \ref{Th1} when $n\neq 6$) or of $u_0-(c_n\Scal_g-h)/2$ (for Theorem \ref{Th2} when $n=6$). Prescribing the location of the blow-up point of the bubbles requires additional informations. We define $\Phi:M\to\rr$ as follows:
$$\Phi:=\left\{\begin{array}{ll}
u_0+\frac{1}{2}(h-c_n\Scal_g){\bf 1}_{n=6} &\hbox{in Theorems \ref{Th2}, \ref{Th1} and \ref{Th:lcf}}\\
u_0-\frac{5}{567}|\Weyl_g|_g^2 &\hbox{in Theorem \ref{Th:dim10}}\\
\frac{1}{2}(h-c_6\Scal_g)-u_0&\hbox{in Theorem \ref{Th:dim6}}.
\end{array}\right.$$
Our prescription result is the following:
\begin{theorem}[\bf Prescription of the blow-up point]\label{Th:presc} In addition to the hypothesis in Theorems \ref{Th2}, \ref{Th1}, \ref{Th:dim10}, \ref{Th:lcf}, and \ref{Th:dim6}, assume that there exists $\xi_0\in M$ which is a strict local minimum point of $\Phi$ with $\Phi(\xi_0)>0$. Then the conclusions of the above theorems hold with the extra information that the bubbles are centered at a family $(\xi_\eps)_{\eps}\in M$ such that $\lim_{\eps\to 0}\xi_\eps=\xi_0$.
\end{theorem}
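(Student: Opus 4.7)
The plan is to upgrade the Lyapunov--Schmidt reductions underlying Theorems \ref{Th2}, \ref{Th1}, \ref{Th:dim10}, \ref{Th:lcf}, and \ref{Th:dim6} from producing \emph{some} critical point of the reduced functional to producing one whose blow-up parameter $\xi$ lies arbitrarily close to the prescribed point $\xi_0$. In each of those theorems, the reduction yields a $C^1$ finite-dimensional reduced functional $J_\eps(\xi,\delta)$ on $M\times(0,\infty)$, whose critical points correspond, via the ansatz $\ue=u_0\mp B_{\xi,\delta}+\varphi_{\eps,\xi,\delta}$, to solutions of \eqref{EqEPS} of type $(u_0-B)$. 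The Taylor expansions such as \eqref{eq:exp:10} and \eqref{Pr2Eq1} show that, uniformly on compact subsets of $M\times(0,\infty)$,
$$J_\eps(\xi,\delta)=c_0+\alpha(\eps)\bigl[\,A\,\Phi(\xi)\,f(\delta)-B\,g(\delta)\,\bigr]+o(\alpha(\eps)),$$
with $A,B>0$, $\alpha(\eps)\to 0$, and $f,g$ explicit positive functions (depending on the dimension and the case considered) chosen so that, whenever $\Phi(\xi_0)>0$, the one-variable function $\delta\mapsto A\,\Phi(\xi_0) f(\delta)-B\,g(\delta)$ admits a global minimum at some $\delta_*=\delta_*(\xi_0)\in(0,\infty)$.

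The prescription argument is then a localization of the usual min-max step. First, since $\xi_0$ is a \emph{strict} local minimum of $\Phi$, I fix a relatively compact open neighborhood $U\subset M$ of $\xi_0$ such that $\Phi(\xi)>\Phi(\xi_0)$ for every $\xi\in\partial U$. Next, using $\Phi(\xi_0)>0$ and the coercivity of $\delta\mapsto A\Phi(\xi_0)f(\delta)-Bg(\delta)$ (which blows up at $0$ and $\infty$ by the explicit form of $f,g$), I pick $0<\delta_-<\delta_*<\delta_+<\infty$ and, shrinking $U$ if necessary, I arrange that
$$A\Phi(\xi)f(\delta)-Bg(\delta)>A\Phi(\xi_0)f(\delta_*)-Bg(\delta_*)\quad\text{for all }(\xi,\delta)\in\partial(U\times[\delta_-,\delta_+]).$$
This is possible by continuity of $\Phi$ together with the strict-minimum property at $\xi_0$ and the strict minimization in $\delta$ at $\delta_*$.

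Consider now the compact set $K:=\overline{U}\times[\delta_-,\delta_+]$. By continuity, $J_\eps$ attains a minimum on $K$ at some $(\xi_\eps,\delta_\eps)$. Comparing $J_\eps(\xi_\eps,\delta_\eps)\le J_\eps(\xi_0,\delta_*)$ with the uniform expansion displayed above, the strict boundary inequality forces $(\xi_\eps,\delta_\eps)$ to lie in the interior of $K$ for all sufficiently small $\eps>0$. An interior minimizer of $J_\eps$ is a critical point, and therefore the Lyapunov--Schmidt correspondence furnishes a genuine solution $\ue$ of \eqref{EqEPS} of type $(u_0-B)$ with bubble centered at $\xi_\eps\in U$. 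Passing to subsequences and using the uniform expansion once more, any limit point of $(\xi_\eps,\delta_\eps)$ must minimize $\Phi(\xi)f(\delta)-(B/A)g(\delta)$ on $K$; the strict minimum at $(\xi_0,\delta_*)$ yields $\xi_\eps\to\xi_0$ and $\delta_\eps\to\delta_*$.

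The main obstacle is ensuring that the Taylor expansion of $J_\eps$ is \emph{uniform} in $(\xi,\delta)$ over the compact set $K$, with error $o(\alpha(\eps))$ independent of the parameters; the pointwise expansions used to prove Theorems \ref{Th2}--\ref{Th:dim6} must be revisited to confirm the dependence of the remainder on $(\xi,\delta)$ is controlled uniformly on $K$. This is a standard but slightly tedious upgrade of the error estimates in Section~\ref{sec:error}, essentially amounting to observing that every constant appearing in those estimates depends continuously on $\xi\in M$ and remains bounded for $\delta$ in a compact sub-interval of $(0,\infty)$. Once uniformity is established, the remaining argument is the elementary compactness and localization discussed above.
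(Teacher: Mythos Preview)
Your proposal is correct and follows the same localization idea as the paper's (one-line) proof, which simply says to restrict the minimization underlying the proofs of Theorems~\ref{Th2}--\ref{Th:dim6} to $(0,+\infty)\times B_{\xi_0}(\nu_0)$ and use the strict local minimum property of $\Phi$ together with the coercivity in $t$ to force the minimizer into the interior. Note that the uniformity concern you raise in your last paragraph is not actually an obstacle: Proposition~\ref{Pr2} already states that the expansion \eqref{Pr2Eq1} holds uniformly in $\xi\in M$ and in $t$ on compact subsets of $(0,\infty)$, so no upgrade of Section~\ref{sec:error} is required.
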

In case $h\in C^1(M)$ and there exists $\xi_0\in M$ which is a $C^1-$stable critical point of $\Phi$ with $\Phi(\xi_0)>0$, the same conclusion holds with the convergence \eqref{Pr2Eq1} holding in $C^1$.
%\begin{theorem}[\bf Prescription of the blow-up point]\label{Th:presc} Assume that $h\in C^1(M)$ and that there exists $\xi_0\in M$ a $C^1-$stable critical point for $\Phi$ with $\Phi(\xi_0)>0$. 
%The conclusions of Theorems \ref{Th2}, \ref{Th1}, \ref{Th:dim10}, \ref{Th:lcf},  and \ref{Th:dim6} hold with the extra information that the bubbles are centered at a family $(\xi_\eps)_{\eps}\in M$ such that $\lim_{\eps\to 0}\xi_\eps=\xi_0$ .
%\end{theorem}

\section{Discussion on the degenerate case}\label{sec:hyp:nondege}
In the sequel, we say that $(B_\eps)_\eps$ is a bubble if there exists a family $(\xe)_\eps\in M$ and a family $(\mu_\eps)_\eps\in \rr_{>0}$ such that $\lim_{\eps\to 0}\mu_\eps=0$ and
\begin{equation}\label{def:bubble}
B_\eps(x):=\left(\frac{\sqrt{n(n-2)}\mu_\eps}{\mu_\eps^2+d_g(x,x_\eps)^2}\right)^{\frac{n-2}{2}}\hbox{ for all }x\in M\,.
\end{equation}
In this situation, we say that the bubble is centered at $(x_\eps)_{\eps}$. We say that a solution $u_0\in C^{2,\theta}(M)$ to 
\begin{equation}\label{Th2Eq}
\Delta_gu_0+h u_0=u_0^{\crit-1}\quad\text{in }M
\end{equation}
is nondegenerate if the linearization of the equation has a trivial kernel, that is
\begin{equation}\label{def:K}
K_{h,u_0}:=\left\{\varphi\in C^{2,\theta}(M)/\; \Delta_g \varphi+h\varphi=(\crit-1)|u_0|^{\crit-2}\varphi\right\}=\{0\}.
\end{equation}
Theorems \ref{Th2} and \ref{Th1} require the assumption that $u_0$ is a nondegenerate solution to \eqref{Th2Eq}. In this section, we prove that this is a necessary assumption, and that it is generic.

\subsection{The conformal geometric equation and necessity of the nondegeneracy assumption}
Let $(M,g)$ be a compact Riemannian manifold of dimension $n\geq 3$ with positive Yamabe invariant. Up to a conformal change of metric, it follows from the resolution of the Yamabe problem that we can assume that the scalar curvature $\Scal_g$ is a positive constant, and we consider $u_{0,g}:=(c_n\Scal_g)^{1/(\crit-2)}$ the only positive constant solution to the Yamabe equation
\begin{equation}\label{eq:cst:yam}
\Delta_g u_{0,g}+c_n\Scal_g u_{0,g}=u_{0,g}^{\crit-1}\quad\hbox{in }M\,.
\end{equation}
As is easily checked, in this situation,
$$K_{c_n\Scal_g, u_{0,g}}=\left\{\varphi\in C^2(M)/\, \Delta_g \varphi=\frac{\Scal_g}{n-1}\varphi\right\},$$
where the kernel is defined in \eqref{def:K}. Therefore 
$$u_{0,g}\hbox{ is a nondegenerate solution to \eqref{eq:cst:yam}}\Leftrightarrow\; \frac{\Scal_g}{n-1}\not\in\,\hbox{Spec }(\Delta_g),$$
where Spec $(\Delta_g)$ is the nonnegative spectrum of $\Delta_g$. We define the Yamabe invariant by
\begin{equation}\label{def:yam:invar}
\mu_{[g]}(M):=\inf_{g'\in [g]}
\frac{\int_M \Scal_{g'}\, dv_{g'}}{\hbox{Vol}_{g'}(M)^{\frac{n-2}{n}}}\, ,
\end{equation}
where $[g]$ is the conformal class of $g$ and $dv_g$ is the Riemannian element of volume. The Yamabe invariant $\mu_{[g]}(M)$ is positive iff the operator $\Delta_{g'}+c_n\Scal_{g'}$ is coercive for all $g'\in [g]$. It is well known that if $g$ is a Yamabe metric (that is a minimizer of the Yamabe functional \eqref{def:yam:invar}), one has that $\frac{\Scal_g}{n-1}\leq \lambda_1(\Delta_g)$, the first nonzero eigenvalue of $\Delta_g$. Note that equality is achieved on the canonical sphere $(\mathbb{S}^n,\can)$. More generally, any positive solution to the Yamabe equation on the canonical sphere is a Yamabe metric and is degenerate.

\medskip\noindent The following result shows that the conclusion of Theorems \ref{Th2} and \ref{Th1} do not hold on the standard sphere (where positive solutions to the Yamabe equations are all degenerate):

\begin{proposition}\label{prop:cns} There does not exist any family of functions $(\ue)_\eps\in C^{2,\theta}(\mathbb{S}^n)$ of type $(u_0-B)$ to the equation
\begin{equation}\label{eq:ue}
\Delta_{\can}\ue +c_n \Scal_{\can}\ue=|\ue|^{\crit-2-\eps}\ue\quad\hbox{in }M
\end{equation}
for all $\eps\in (0,\crit)$.
\end{proposition}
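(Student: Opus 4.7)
The plan is to argue by contradiction, testing the equation against a nonzero element of the linearized kernel at $u_0$ and showing that the resulting identity forces a geometric obstruction that cannot hold on $\mathbb{S}^n$. A companion argument using the conformal Pohozaev identity handles the case $u_0\equiv 0$ by sign mismatch.

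Suppose such a family $\ue = u_0 - B_\eps + o(1)$ in $H_1^2$ exists, with $(B_\eps)$ centered at $\xe\in\mathbb{S}^n$ with concentration $\mu_\eps\to 0$. Passing to the weak limit (using $B_\eps\rightharpoonup 0$ in $H_1^2$) shows that $u_0\in C^{2,\theta}(\mathbb{S}^n)$ solves the Yamabe equation on $(\mathbb{S}^n,\can)$. Treat first the case $u_0\not\equiv 0$. When $u_0>0$, every positive Yamabe solution on $\mathbb{S}^n$ is a conformal image of the constant $u_{0,g}:=(c_n\Scal_{\can})^{1/(\crit-2)}$, so after a conformal change we may take $u_0\equiv u_{0,g}$; the sign-changing case follows by an analogous argument using the conformal structure of the kernel. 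The linearized operator at $u_{0,g}$ equals $\Delta_{\can}-n$, whose kernel is the first nonzero eigenspace of $\Delta_{\can}$, spanned by the height functions $\phi_\xi(x):=\langle x,\xi\rangle_{\mathbb{R}^{n+1}}$ for $\xi\in\mathbb{S}^n$.

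For $\phi=\phi_\xi$ in this kernel, testing \eqref{eq:ue} against $\phi$ and using self-adjointness with $\Delta_{\can}\phi+c_n\Scal_{\can}\phi=(\crit-1)u_{0,g}^{\crit-2}\phi$ produces
\[
(\crit-1)\int_{\mathbb{S}^n}\ue\,u_{0,g}^{\crit-2}\phi\,dv_{\can}=\int_{\mathbb{S}^n}|\ue|^{\crit-2-\eps}\ue\,\phi\,dv_{\can}.
\]
Substituting $\ue=u_{0,g}-B_\eps+o(1)$ and splitting $\mathbb{S}^n$ at the transition scale $d(x,\xe)\sim\mu_\eps^{1/2}$ (where $B_\eps\sim u_{0,g}$): the pieces depending only on $u_{0,g}$ integrate to multiples of $\int\phi\,dv_{\can}=0$, and in the far region $\{d(x,\xe)\gtrsim\mu_\eps^{1/2}\}$ a Taylor expansion of $|\ue|^{\crit-2-\eps}\ue$ in $(\ue-u_{0,g})=-B_\eps+o(1)$ yields a linear-in-$B_\eps$ contribution that exactly cancels against the left-hand side. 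The surviving dominant term is the bubble self-interaction from the near region,
\[
-\int_{\{d\lesssim\mu_\eps^{1/2}\}}\phi\,B_\eps^{\crit-1-\eps}\,dv_{\can}=-\phi(\xe)\,K_n\,\mu_\eps^{(n-2)/2}(1+o(1)),
\]
with universal positive constant $K_n:=(n(n-2))^{(n+2)/4}\int_{\rn}(1+|y|^2)^{-(n+2)/2}\,dy$. Dividing by $\mu_\eps^{(n-2)/2}$ and sending $\eps\to 0$ forces $\phi_\xi(\xe)\to 0$ for every $\xi\in\mathbb{S}^n$. Passing to a subsequence with $\xe\to\bar x$, we obtain $\phi_\xi(\bar x)=0$ for all $\xi$, which contradicts the fact that the height functions separate points on $\mathbb{S}^n$.

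The remaining case $u_0\equiv 0$, where $-\ue=B_\eps+o(1)$ would be a positive blowing-up family, is ruled out by the conformal Pohozaev identity: for any conformal Killing field on $\mathbb{S}^n$ with conformal factor $\sigma=-\phi_\xi$, every solution $u$ of \eqref{eq:ue} satisfies
\[
\int_{\mathbb{S}^n}\sigma\,u^2\,dv_{\can}=-\frac{\eps(n-2)}{2n^2}\int_{\mathbb{S}^n}\sigma\,|u|^{\crit-\eps}\,dv_{\can}\,(1+O(\eps)).
\]
Choosing $\xi$ with $\sigma(\xe)>0$, the left-hand side is positive at leading order (dominated by $\sigma(\xe)\int B_\eps^2>0$), while the right-hand side is negative at leading order (dominated by $-\eps\,\sigma(\xe)\int B_\eps^{\crit}$, with $\int B_\eps^{\crit}$ converging to a positive universal constant), giving the required contradiction. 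The main technical obstacle throughout is the careful matching of near- and far-region expansions: the splitting at $d\sim\mu_\eps^{1/2}$ is precisely what isolates $-\phi(\xe)K_n\mu_\eps^{(n-2)/2}$ as the unbalanced leading term in the kernel projection.
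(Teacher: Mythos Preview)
Your kernel-testing strategy has a genuine gap: you cannot control the $o(1)$ remainder in the decomposition $\ue=u_{0,g}-B_\eps+r_\eps$ at the scale $\mu_\eps^{(n-2)/2}$ where your leading term lives. When you expand $\int\big[|\ue|^{\crit-2-\eps}\ue-(\crit-1)u_{0,g}^{\crit-2}\ue\big]\phi$, the quadratic (or $|\cdot|^{\crit-1}$) Taylor remainder produces a contribution of size $O(\|r_\eps\|_{H_1^2}^{\min(2,\crit-1)})$, which is only $o(1)$; the hypothesis ``type $(u_0-B)$'' gives no bound of the form $o(\mu_\eps^{(n-2)/2})$. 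Similarly, the $O(\eps)$ discrepancy between the linear coefficients $(\crit-1)u_{0,g}^{\crit-2}$ and $(\crit-1-\eps)u_{0,g}^{\crit-2-\eps}$ yields a term $O(\eps)\int v_\eps\phi$, and you have established no relation between $\eps$ and $\mu_\eps$. So after dividing by $\mu_\eps^{(n-2)/2}$ you face $o(1)/\mu_\eps^{(n-2)/2}$, which need not vanish, and the conclusion $\phi_\xi(\bar x)=0$ does not follow.

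The paper avoids this entirely by using the Kazdan--Warner identity, which for any first eigenfunction $\phi$ yields the \emph{exact} relation $\int_{\mathbb{S}^n}\phi\,|\ue|^{\crit-\eps}\,dv_{\can}=0$. Here the bubble contributes at order $O(1)$ (namely $\big(\int_{\rn}U^{\crit}\big)\phi(x_0)$ in the limit), so the $o(1)$ contribution of $r_\eps$ to $\int|\ue|^{\crit}$ is genuinely negligible, and the limit is computed by concentration--compactness alone. Combining with Obata's theorem (which gives $\int u_0^{\crit}=\int U^{\crit}$) and choosing $\phi$ so that $\phi(x_0)=\max\phi>0$ forces $\int\phi\,u_0^{\crit}=-\int U^{\crit}$, impossible since $\phi>-1$ a.e.\ and $u_0>0$. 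Your $u_0\equiv0$ branch is unnecessary in the paper's treatment (the weak limit is automatically a positive Yamabe solution), and in any case the Pohozaev-type identity you quote does not have the stated form; the correct sphere identity is again Kazdan--Warner.
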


\proof[Proof] We argue by contradiction and assume the existence of a family $(\ue)_\eps\in C^{2,\theta}(\mathbb{S}^n)$ of the form \eqref{model:sol:bubble} of solutions to equation \eqref{eq:ue}. Multiplying \eqref{eq:ue} by the bubble $B_\eps$ and integrating by parts yield $\mu_\eps^\eps\to 1$ when $\eps\to 0$. Fix $\phi\in \Lambda_1(\mathbb{S}^n,\can)$, the set of eigenfunctions of $\lambda_1(\Delta_{\can})=n$, the first nonzero eigenvalue of $\Delta_{\can}$ on $\mathbb{S}^n$: indeed, see for instance Berger--Gauduchon--Mazet~\cite{bgm}, we have that $\Lambda_1(\mathbb{S}^n,\can)=\{l_{|\mathbb{S}^n}/ \, l:\mathbb{R}^{n+1}\to\mathbb{R}\hbox{ linear}\}$. It follows from Kazdan--Warner~\cite{kazdanwarner} that
$$\int_{\mathbb{S}^n}\Delta_{\can}\ue \langle\nabla\phi,\nabla\ue\rangle_{\can}\, dv_{\can}=\frac{n-2}{2n}\int_{\mathbb{S}^n}\Delta_{\can}\phi |\nabla \ue|_{\can}^2\, dv_{\can}\,.$$
Using equation \eqref{eq:ue} and integrating by parts yields
$$\eps\int_{\mathbb{S}^n}\phi |\ue|^{\crit-\eps}\, dv_{\can}=0\,.$$
Letting $\eps\to 0$ and using \eqref{def:bubble}, \eqref{model:sol:bubble}, and \eqref{Eq10} yields, up to a subsequence, 
$$\int_{\mathbb{S}^n}\phi u_0^{\crit}\, dv_{\can}+\left(\int_{\rn}U^{\crit}\, dx\right) \phi(x_0)=0\,,$$
where $x_0\in \mathbb{S}^n$. Passing to the weak limit in \eqref{eq:ue} when $\eps\to 0$ implies that $u_0$ is a positive solutions to the Yamabe equation on $\mathbb{S}^n$. It then follows from Obata~\cite{obata} that $\int_{\mathbb{S}^n} u_0^{\crit}\, dv_{\can}=\int_{\rn}U^{\crit}\, dx$. Taking $\phi\in \Lambda_1(\mathbb{S}^n,\can)$ such that $\min\, \phi=\phi(-x_0)\neq 0$ in the above equation yields a contradiction since $\phi\not\equiv 0$. This ends the proof of Proposition \ref{prop:cns}.
\endproof

\subsection{Genericity of the nondegeneracy assumption} The following proposition shows that the nondegeneracy hypothesis of Theorem \ref{Th2} is generic:
\begin{proposition}\label{prop:gene:h} Let $h\in C^{0,\theta}(M)$ and let $u_0\in C^{2,\theta}(M)$ be a positive solution to $\Delta_g u_0+hu_0=u_0^{\crit-1}$. Fix $\nu>0$. Then there exist $\tilde{h}_\nu\in C^{0,\theta}(M)$ and $\tilde{u}_{0,\nu}\in C^{2,\theta}(M)$ such that $\Vert h-\tilde{h}_\nu\Vert_{C^{0,\theta}}+\Vert u_0-\tilde{u}_{0,\nu}\Vert_{C^{2,\theta}}<\nu$ and $\tilde{u}_{0,\nu}>0$ is a nondegenerate solution to $\Delta_g \tilde{u}_{0,\nu}+\tilde{h}_\nu\tilde{u}_{0,\nu}=\tilde{u}_{0,\nu}^{\crit-1}$ in $M$.
\end{proposition}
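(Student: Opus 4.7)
The plan is to take $\tilde u_{0,\nu}$ to be a scalar rescaling of $u_0$ and to define $\tilde h_\nu$ accordingly, so that the equation is automatically satisfied. Concretely, set $\tilde u_{0,\nu} := t\, u_0$ for a parameter $t > 0$ close to $1$ (to be fixed at the end), and
$$\tilde h_\nu := h + (t^{\crit-2}-1)\, u_0^{\crit-2}.$$
A direct computation using $\Delta_g u_0 + h u_0 = u_0^{\crit-1}$ gives $\Delta_g \tilde u_{0,\nu} + \tilde h_\nu\, \tilde u_{0,\nu} = \tilde u_{0,\nu}^{\crit-1}$, and positivity of $\tilde u_{0,\nu}$ is clear. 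Since $u_0$ is bounded below by a positive constant on the compact manifold $M$, the function $u_0^{\crit-2}$ lies in $C^{2,\theta}(M)$, so both $\Vert \tilde u_{0,\nu} - u_0\Vert_{C^{2,\theta}} = |t-1|\Vert u_0\Vert_{C^{2,\theta}}$ and $\Vert \tilde h_\nu - h\Vert_{C^{0,\theta}} = |t^{\crit-2}-1|\Vert u_0^{\crit-2}\Vert_{C^{0,\theta}}$ tend to $0$ as $t\to 1$.

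Next I would compute the linearization at $\tilde u_{0,\nu}$:
$$L_{\tilde h_\nu, \tilde u_{0,\nu}}\, \varphi \;=\; L_{h,u_0}\, \varphi - (\crit-2)(t^{\crit-2}-1)\, u_0^{\crit-2}\, \varphi,$$
where $L_{h,u_0} := \Delta_g + h - (\crit-1)\, u_0^{\crit-2}$ is the linearization at $u_0$. Writing $s := t^{\crit-2}-1$ and $V := -(\crit-2)\, u_0^{\crit-2}$, the one-parameter family $A_s := L_{h,u_0} + sV$ is a real-analytic family of type (A) in the sense of Kato of self-adjoint elliptic operators of second order with compact resolvent on $L^2(M,dv_g)$. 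In particular each $A_s$ has discrete real spectrum, and $\ker A_0 = K_{h,u_0}$ is finite-dimensional.

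The heart of the argument is to show that the finitely many zero eigenvalues of $A_0$ move away from $0$ as soon as $s \neq 0$. By the Rayleigh--Schr\"odinger expansion for analytic self-adjoint families, the eigenvalues of $A_s$ bifurcating from $0$ are of the form $s\mu_i + O(s^2)$, where the $\mu_i$'s are the eigenvalues of the symmetric matrix
$$M_{ij} \;:=\; \langle V\varphi_i, \varphi_j\rangle_{L^2} \;=\; -(\crit-2)\int_M u_0^{\crit-2}\, \varphi_i\, \varphi_j\, dv_g,$$
computed in any $L^2$-orthonormal basis $(\varphi_i)$ of $K_{h,u_0}$. Since $u_0 > 0$, the bilinear form $(\varphi,\psi)\mapsto \int_M u_0^{\crit-2}\varphi\psi\, dv_g$ is positive definite on $K_{h,u_0}$, making $M$ negative definite, so all $\mu_i < 0$. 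The other eigenvalues of $A_0$ are bounded away from $0$ and remain so for small $|s|$ by continuity of the spectrum. Hence there exists $s_0 > 0$ such that $\ker A_s = \{0\}$ for all $s\in (0,s_0)$. Choosing $t > 1$ close enough to $1$ so that $s = t^{\crit-2}-1 \in (0,s_0)$ and simultaneously $\Vert \tilde h_\nu - h\Vert_{C^{0,\theta}} + \Vert \tilde u_{0,\nu} - u_0\Vert_{C^{2,\theta}} < \nu$ finishes the proof. The only delicate point is the degenerate perturbation formula when $\dim K_{h,u_0} \geq 2$, which I would handle by invoking the standard Kato--Rellich theory for analytic families of self-adjoint operators with compact resolvent; I do not expect any serious obstacle beyond this.
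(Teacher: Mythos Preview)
Your proof is correct and takes a genuinely different route from the paper. You build the perturbed pair by the explicit rescaling $\tilde u_{0,\nu}=t\,u_0$ and adjust $\tilde h_\nu$ accordingly, then invoke Kato--Rellich perturbation theory to show the zero eigenvalues of the linearized operator move off zero (the negative-definiteness of the first-order matrix is the clean point). The paper instead fixes a small parameter $\eta>0$, constructs $w_\eta$ variationally as a positive minimizer of a subcritical-type quotient, shows $w_\eta\to u_0$ in $C^{2,\theta}$, and sets $\tilde h_\nu=h-(\crit-1)(u_0^{\crit-2}-w_\eta^{\crit-2})-\eta$. The payoff of the paper's construction is that the linearized operator at $(\tilde h_\nu,w_\eta)$ is exactly $L_{h,u_0}-\eta\cdot\mathrm{Id}$, so nondegeneracy follows immediately from discreteness of the spectrum---no perturbation expansion needed. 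Conversely, your construction of $(\tilde h_\nu,\tilde u_{0,\nu})$ is completely explicit and avoids the variational step and the convergence argument $w_\eta\to u_0$. In short: your ansatz is simpler, the paper's nondegeneracy argument is simpler; both are valid.
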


\begin{proof} We define
$$\mu_\eta:=\inf_{u\in H_1^2(M)\setminus\{0\}}\frac{\int_M(|\nabla u|_g^2+(h-(\crit-1)u_0^{\crit-2}-\eta)u^2)\, dv_g}{\left(\int_M |u|^{\crit}\, dv_g\right)^{\frac{2}{\crit}}}$$
for all $\eta\geq 0$. Testing the functional on $u_0$ yields $\mu_{\eta}<0$ for all $\eta\geq 0$. As is easily checked, $\lim_{\eta\to 0}\mu_\eta=\mu_0<0$. Standard variational arguments yield the existence of a positive minimizer $w_\eta\in C^{2,\theta}(M)$ for $\mu_\eta$ such that $\Delta_g w_\eta+(h-(\crit-1)u_0^{\crit-2}-\eta)w_\eta=-(\crit-2)w_\eta^{\crit-1}$ in $M$ for all $\eta>0$; moreover, the family $(w_\eta)_{\eta\geq 0}$ is relatively compact in $C^{2}(M)$. Since $u_0$ is the only positive solution to the equation $\Delta_g v+(h-(\crit-1)u_0^{\crit-2})v=-(\crit-2)v^{\crit-1}$ (let $w$ be the quotient of two positive solutions and estimate $\Delta_g w$ at extremal points of $w$), one gets that $\lim_{\eta\to 0}w_\eta=u_0$ in $C^2(M)$, and then $C^{2,\theta}(M)$ by elliptic regularity. One defines $h_\eta:=h-(\crit-1)(u_0^{\crit-2}-w_\eta^{\crit-2})-\eta$. Then $\Delta_g w_\eta+h_\eta w_\eta=w_\eta^{\crit-1}$ in $M$ and spectral theory yields the existence of $\eta_0>0$ such that $K_{h_\eta,w_\eta}=\{0\}$ for all $\eta\in (0,\eta_0)$. The conclusion of the proposition follows from taking $\tilde{u}_{0,\nu}:=w_\eta$ and $\tilde{h}_\nu:=h_\eta$ for $\eta>0$ small enough.
\end{proof}

\noindent We now focus on the geometric case, that is Theorem \ref{Th1}. We adopt the terminology of Khuri--Marques--Schoen~\cite{KhuMarSch}: given $M$ a compact manifold, $g_0$ a background Riemannian metric on $M$, and $\omega$ a volume form on $M$, to each class  $c\in {\mathcal C}:=\{\hbox{Conformal classes of Riemannian metrics on }M\}$, we associate the unique metric $g\in c$ for which the Riemannian $n-$volume form is $\omega$. The $C^{k,\theta}$--distance between two classes is defined as the $C^{k,\theta}$-distance between their representatives via this analogy. The following result holds:

\begin{proposition}[Khuri--Marques--Schoen~\cite{KhuMarSch}]\label{prop:gene:geom}
There exists ${\mathcal O}\subset {\mathcal C}$ an open dense set such that for all $c\in {\mathcal O}$, there exists exactly a finite nonzero number of metrics $g\in c$ (up to homothetic transformations) such that the constant positive function $u_{0,g}$ is nondegenerate.
\end{proposition}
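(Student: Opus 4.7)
The plan is to combine Smale's infinite-dimensional Sard theorem with the compactness theorem of Khuri--Marques--Schoen, exploiting the Banach manifold structure on $\mathcal{C}$ provided by the parametrization recalled in the excerpt (each class represented by its unique metric with volume form $\omega$). A metric $g \in c$, modulo homothety, for which $u_{0,g}$ is defined is a positive constant scalar curvature (CSC) representative of $c$, and by the computation recalled just above the statement, $u_{0,g}$ is nondegenerate iff the operator $L_g := \Delta_g - \tfrac{\Scal_g}{n-1} : C^{2,\theta}(M) \to C^{0,\theta}(M)$ is invertible.

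To implement Sard--Smale, I would introduce the map
\begin{equation*}
\Psi : \mathcal{C} \times C^{2,\theta}_+(M) \longrightarrow C^{0,\theta}(M) \times \rr, \qquad \Psi(c,u) := \left( \Delta_g u + c_n \Scal_g\, u - \lambda(u)\, u^{\crit-1},\; \int_M u^{\crit}\, dv_g - 1 \right),
\end{equation*}
where $g$ is the $\omega$-representative of $c$ and $\lambda(u) \in \rr$ is the Lagrange multiplier determined by pairing the first line with $u$. Via $g' = u^{4/(n-2)} g$, the zero set $\Psi^{-1}(0)$ parametrizes the normalized positive CSC representatives (modulo homothety) of all conformal classes. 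At a point $(c_0, u_0) \in \Psi^{-1}(0)$ corresponding to a nondegenerate representative $g_0$, invertibility of $L_{g_0}$ yields that $D_u \Psi$ is Fredholm of index zero with vanishing cokernel; hence $\Psi^{-1}(0)$ is locally a smooth Banach submanifold and the projection $\pi : \Psi^{-1}(0) \to \mathcal{C}$ is Fredholm of index zero. Smale's theorem then gives that the set of regular values of $\pi$ is residual in $\mathcal{C}$, and because the index vanishes, a class $c$ is regular exactly when every CSC representative of $c$ is nondegenerate.

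To upgrade from residual to open dense and to obtain the finite nonzero count, I would invoke the Khuri--Marques--Schoen compactness theorem, which asserts that the set of positive CSC representatives of any given conformal class is compact in $C^{2,\theta}$. Combined with the implicit function theorem (each nondegenerate representative of $c_0$ deforms to a unique nondegenerate representative for $c$ near $c_0$), compactness prevents additional CSC representatives from bifurcating into existence under small perturbations, by a standard diagonal-extraction argument; this gives openness of $\mathcal{O}$. Finiteness of the fibre over a regular value follows from the same isolation-versus-compactness dichotomy, and nonemptyness is immediate from the resolution of the Yamabe problem, since any Yamabe minimizer in $c$ is a positive CSC metric (nondegenerate for $c \in \mathcal{O}$). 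The main obstacle is precisely this openness step: the diagonal-extraction argument relies on the deep compactness theorem of Khuri--Marques--Schoen and is where the dimensional restriction $n \le 24$ enters.
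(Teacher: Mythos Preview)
The paper does not prove this proposition at all: it is stated as a result of Khuri--Marques--Schoen~\cite{KhuMarSch} and simply cited, with the sentence following it (``In other words, up to a perturbation in the conformal class, the hypothesis of Theorem~\ref{Th1} holds'') serving only as commentary. There is therefore no proof in the paper to compare your proposal against.

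That said, your sketch is a reasonable outline of the argument one finds in~\cite{KhuMarSch}: the Sard--Smale transversality framework to obtain genericity of nondegeneracy, combined with the compactness theorem of that paper to upgrade residual to open and to force finiteness of the fibre. You are right that the dimensional restriction $n\le 24$ (mentioned in the introduction of the present paper but not in the statement of the proposition) enters exactly through the compactness theorem. One point to be careful about: at degenerate CSC representatives the partial linearization $D_u\Psi$ is not surjective, so to apply Sard--Smale you must verify that the \emph{full} differential $D\Psi$ (varying the conformal class as well) is surjective at every point of $\Psi^{-1}(0)$; this requires an explicit computation of how $\Psi$ depends on $c$ and is the substantive transversality step, not merely the index-zero Fredholm property of $D_u\Psi$.
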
 
\noindent In other words, up to a perturbation in the conformal class, the hypothesis of Theorem \ref{Th1} holds.

\subsection{A family of nondegenerate geometric solutions in the locally conformally flat case}
We exhibit here a situation in which the nondegeneracy assumption is satisfied for the geometric equation in the locally conformally flat case (see Theorem \ref{Th:lcf}). For all $k\geq 1$ and $t>0$, $(\mathbb{S}^k(t),\can)$ is the canonical sphere ot radius $t$ in $\rr^{k+1}$:
\begin{proposition}\label{prop:ex:lcf}
Let $M_r:=\mathbb{S}^1(r)\times \mathbb{S}^{n-1}$ be endowed with its canonical product metric $g_r$, where $r>0$. Then $(M_r,g_r)$ is locally conformally flat with positive constant scalar curvature. Moreover, for any $r\not\in \{\frac{i}{\sqrt{n-2}}/\, i\in \mathbb{Z}_{>0}\}$, the positive constant solution to the Yamabe equation is nondegenerate.
\end{proposition}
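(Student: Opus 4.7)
The plan is to verify each of the three claims—local conformal flatness, positive constant scalar curvature, and nondegeneracy of the constant positive solution—by direct computation, relying on the spectral characterization of nondegeneracy already established in the paper.

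For local conformal flatness, I would pass to the universal Riemannian cover $(\rr\times\mathbb{S}^{n-1},dt^2+\can_{\mathbb{S}^{n-1}})$ and use the standard fact that the cylinder is globally conformally equivalent to $\rn\setminus\{0\}$. Indeed, under $t=\log\rho$, the Euclidean polar form $d\rho^2+\rho^2\can_{\mathbb{S}^{n-1}}$ equals $\rho^2(dt^2+\can_{\mathbb{S}^{n-1}})$. Since local conformal flatness descends from the universal cover, $(M_r,g_r)$ is locally conformally flat. For the scalar curvature, the product formula gives $\Scal_{g_r}=\Scal_{\mathbb{S}^1(r)}+\Scal_{\mathbb{S}^{n-1}}=0+(n-1)(n-2)$, which is a positive constant.

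For the nondegeneracy, I would invoke the criterion recalled just before Proposition \ref{prop:ex:lcf}: the constant positive Yamabe solution $u_{0,g_r}$ is nondegenerate if and only if $\Scal_{g_r}/(n-1)=n-2$ does not belong to $\Span(\Delta_{g_r})$. By separation of variables, the spectrum of $\Delta_{g_r}$ on the product is
\[
\Span(\Delta_{g_r})=\left\{\frac{k^2}{r^2}+\ell(\ell+n-2)\ :\ k,\ell\in\mathbb{Z}_{\geq 0}\right\}.
\]
I would then check when $n-2$ lies in this set. If $\ell\geq 1$ then $\ell(\ell+n-2)\geq n-1>n-2$, so $k^2/r^2$ would have to be negative, which is impossible. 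If $\ell=0$ then the equation $k^2/r^2=n-2$ with $k\in\mathbb{Z}_{\geq 0}$ and $n\geq 3$ forces $k\in\mathbb{Z}_{>0}$ with $r=k/\sqrt{n-2}$. Therefore $n-2\in\Span(\Delta_{g_r})$ if and only if $r\in\{i/\sqrt{n-2}:i\in\mathbb{Z}_{>0}\}$, which concludes the proof of Proposition \ref{prop:ex:lcf}.

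None of the steps is genuinely hard; the only point requiring a small amount of care is the conformal flatness argument, where one should avoid confusing the global conformal equivalence on the cover with a global statement on $M_r$ itself. The spectral bookkeeping is elementary once the product spectrum and the nondegeneracy criterion are in hand.
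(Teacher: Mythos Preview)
Your proof is correct and follows essentially the same route as the paper: compute $\Scal_{g_r}=(n-1)(n-2)$, identify the product spectrum $\{k^2/r^2+\ell(\ell+n-2):k,\ell\in\mathbb{Z}_{\geq0}\}$, and check when $n-2$ belongs to it, while for conformal flatness the paper simply invokes the standard fact that $\mathbb{S}^1\times(\text{space form})$ is locally conformally flat, where you give the explicit cylinder-to-punctured-space argument. One notational slip: you write $\Span(\Delta_{g_r})$ where you mean the spectrum; replace this by $\hbox{Spec}(\Delta_{g_r})$.
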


\begin{proof} Recall that on the canonical sphere $\mathbb{S}^k(t)$, the spectrum of the Laplacian is $\big\{\frac{i(k+i-1)}{t^2}/\, i\in \mathbb{Z}_{\geq 0}\big\}$ (see~\cite{bgm}). Then the spectrum of $\Delta_{g_r}$ is $\{\frac{i^2}{r^2}+j(n-2+j)/\, i,j\in \mathbb{Z}_{\geq 0}\}$. Independently, the scalar curvature is $\Scal_{g_r}:=(n-1)(n-2)$. As a consequence,
$$\frac{\Scal_{g_r}}{n-1}\not\in \hbox{ Spec}(\Delta_{g_r})\;\Leftrightarrow\; r\not\in \left\{\frac{i}{\sqrt{n-2}}/\, i\in \mathbb{Z}_{>0}\right\}.$$
In addition, it is standard that the product of a one-dimensional circle with a space form is locally conformally flat. \end{proof}

\section{Finite dimensional reduction}\label{sec:finite}
Since the operator $\Delta_g+h$ is coercive, the Sobolev space $H^2_1\(M\)$ is endowed with the scalar product $\<\cdot,\cdot\>_h$ defined by 
\begin{equation}\label{Eq7}
\<u,v\>_h=\int_M\<\nabla u,\nabla v\>_gdv_g+\int_Mhuvdv_g
\end{equation}
for all $u,v\in H_1^2(M)$. We let $\|\cdot\|_h$ be the norm induced by $\<\cdot,\cdot\>_h$: this norm is equivalent to the standard norm on $H_1^2(M)$. We let $i^*:L^{\frac{2n}{n+2}}\(M\)\rightarrow H^2_1\(M\)$ be such that for any $w$ in $L^{\frac{2n}{n+2}}\(M\)$, the function $u=i^*\(w\)$ in $H^2_1\(M\)$ is the unique solution of the equation $\Delta_gu+hu=w$ in $M$. We then rewrite equation \eqref{EqEPS} as
\begin{equation}\label{Eq8}
u=i^*\(f_\varepsilon\(u\)\),\quad u\in H^2_1\(M\),
\end{equation}
where $f_\varepsilon\(u\):=\left|u\right|^{2^*-2-\varepsilon}u$.\par
\medskip\noindent  In case $(M,g)$ is locally conformally flat, it follows from the compactness of $M$ that there exists $r_0\in (0, i_g(M))$ (where $i_g(M)>0$ is the injectivity radius of $(M,g)$) such that for any point $\xi$ in $M$, there exists $\varLambda_\xi\in C^\infty(M)$ such that the conformal metric $g_\xi=\varLambda_\xi^{4/\(n-2\)}g$ is flat in $B_\xi\(r_0\)$ and $i_{g_\xi}(M)> r_0$. As is easily seen, the functions $\varLambda_\xi$ can be chosen smooth with respect to $\xi$ and such that $\varLambda_\xi\(\xi\)=1$. If the manifold is not locally conformally flat, then we let $\varLambda_\xi\(x\)=1$ for all points $x$ and $\xi$ in $M$, and we fix $r_0\in (0,i_g(M))$ arbitrarily. We let $\chi$ be a smooth cutoff function such that $0\le\chi\le1$ in $\mathbb{R}$, $\chi=1$ in $\[-r_0/2,r_0/2\]$, and $\chi=0$ in $\mathbb{R}\backslash\(-r_0,r_0\)$. For any positive real number $\delta$ and any point $\xi$ in $M$, we define the function $W_{\delta,\xi}$ on $M$ by
\begin{equation}\label{Eq9}
W_{\delta,\xi}\(x\):=\chi\(d_{g_\xi}\(x,\xi\)\)\varLambda_\xi\(x\)\delta^{\frac{2-n}{2}}U\(\delta^{-1}\exp_\xi^{-1}\(x\)\),
\end{equation}
where $d_{g_\xi}$ is the geodesic distance on $M$ associated with the metric $g_\xi$, the exponential map is taken with respect to the same metric $g_\xi$ and
\begin{equation}\label{Eq10}
U\(x\):=\(\frac{\sqrt{n\(n-2\)}}{1+\left|x\right|^2}\)^{\frac{n-2}{2}}
\end{equation}
for all $x\in\rn$. For any positive real number $\delta$, the function $U_\delta\(x\)=\delta^{\frac{2-n}{2}}U\(\delta^{-1}x\)$ satisfies the equation $\Delta_{\Eucl}U_\delta=U_\delta^{\crit-1}$ where $\Delta_{\Eucl}$ is the Laplace operator associated with the Euclidean metric. Moreover, by Bianchi--Egnell~\cite{BiaEgn}, any solution in $v\in D_1^2\(\mathbb{R}^n\)$ (the completion of $C^\infty_c(\rn)$ for the norm $\Vert u\Vert_{D_1^2}:=\Vert\nabla u\Vert_2$) of the linear equation $\Delta_{\Eucl}v=\(2^*-1\)U^{2^*-2}v$ is a linear combination of the functions 
\begin{equation}\label{Eq11}
V_0\(x\):=\frac{\left|x\right|^2-1}
{\(1+\left|x\right|^2\)^{\frac{n}{2}}}\quad\text{and}\quad V_i\(x\):=\frac{x_i}
{\(1+\left|x\right|^2\)^{\frac{n}{2}}}
\end{equation}
for all $i=1,\dotsc,n$ and $x\in\rn$. For any positive real number $\delta$ and any point $\(\xi,\omega\)$ in $TM$, we define the functions $Z_{\delta,\xi}$ and $Z_{\delta,\xi,\omega}$ in $M$ by
\begin{align}
&Z_{\delta,\xi}\(x\):=\chi\(d_{g_\xi}\(x,\xi\)\)\varLambda_\xi(x)\delta^{\frac{n-2}{2}}\frac{d_{g_\xi}\(x,\xi\)^2-\delta^2}
{\(\delta^2+d_{g_\xi}\(x,\xi\)^2\)^{\frac{n}{2}}}\,,\label{Eq12}\\
&Z_{\delta,\xi,\omega}\(x\):=\chi\(d_{g_\xi}\(x,\xi\)\)\varLambda_\xi(x)\delta^{\frac{n}{2}}\frac{\<\exp_\xi^{-1}x,\omega\>_{g_\xi}}
{\(\delta^2+d_{g_\xi}\(x,\xi\)^2\)^{\frac{n}{2}}}\label{Eq13}
\end{align}
for all $x\in M$. We then let $\varPi_{\delta,\xi}$ and $\varPi^\perp_{\delta,\xi}$ be the projections of the Sobolev space $H^2_1\(M\)$ onto the respective closed subspaces
\begin{equation*}
K_{\delta,\xi}:=\left\{\; \lambda Z_{\delta,\xi}+Z_{\delta,\xi,\omega}\,/\;\lambda\in\mathbb{R}\;\text{and}\;\omega\in T_\xi M\right\}
\end{equation*}
and
\begin{equation}\label{Eq15}
K^\perp_{\delta,\xi}:=\left\{\phi\in H^2_1\(M\)\; /\;\<\phi,Z_{\delta,\xi}\>_h=0\;\text{and}\;\<\phi,Z_{\delta,\xi,\omega}\>_h=0\quad\text{for all }\omega\in T_\xi M\right\},
\end{equation}
where the scalar product $\<\cdot,\cdot\>_h$ is as in \eqref{Eq7}. 

\medskip\noindent Recall that $u_0\in C^{2,\theta}(M)$ is a nondegenerate positive solution to equation \eqref{Th2Eq}. We construct solutions of type $(u_0-B)$ to equations \eqref{EqEPS}, or equivalently \eqref{Eq8}, like 
$$u_\varepsilon:=u_0-W_{\delta_\varepsilon\(t_\varepsilon\),\xi_\varepsilon}+\phi_{\delta_\varepsilon\(t_\varepsilon\),\xi_\varepsilon}\,,\quad\text{with }\delta_\varepsilon\(t_\varepsilon\):=t_\varepsilon\varepsilon^{\frac{2}{n-2}},$$
where $W_{\delta_\varepsilon\(t_\varepsilon\),\xi_\varepsilon}$ is as in \eqref{Eq9}, $\phi_{\delta_\varepsilon\(t_\varepsilon\),\xi_\varepsilon}$ is a function in $K^\perp_{\delta_\varepsilon\(t_\varepsilon\),\xi_\varepsilon}$, and $t_\eps>0$. As easily checked, if $\phi_{\delta_\varepsilon\(t_\varepsilon\),\xi_\varepsilon}\to 0$ in $H_1^2(M)$ when $\eps\to 0$, then $(u_\varepsilon)$ is of type $u_0-B$. We rewrite equation \eqref{EqEPS} as the couple of equations
\begin{equation}\label{Eq16}
\varPi_{\delta_\varepsilon\(t\),\xi}\(u_0-W_{\delta_\varepsilon\(t\),\xi}+\phi_{\delta_\varepsilon\(t\),\xi}-i^*\(f_\varepsilon\(u_0-W_{\delta_\varepsilon\(t\),\xi}+\phi_{\delta_\varepsilon\(t\),\xi}\)\)\)=0
\end{equation}
and
\begin{equation}\label{Eq17}
\varPi^\perp_{\delta_\varepsilon\(t\),\xi}\(u_0-W_{\delta_\varepsilon\(t\),\xi}+\phi_{\delta_\varepsilon\(t\),\xi}-i^*\(f_\varepsilon\(u_0-W_{\delta_\varepsilon\(t\),\xi}+\phi_{\delta_\varepsilon\(t\),\xi}\)\)\)=0\,.
\end{equation}
We begin with solving equation \eqref{Eq17} in Proposition~\ref{Pr1} below:

\begin{proposition}\label{Pr1} Let $u_0\in C^{2,\theta}(M)$ be a positive nondegenerate solution to \eqref{Th2Eq}. Given two positive real numbers $a<b$, there exists a positive constant $C_{a,b}$ such that for $\varepsilon$ small, for any real number $t$ in $\[a,b\]$, and any point $\xi$ in $M$, there exists a unique function $\phi_{\delta_\varepsilon\(t\),\xi}$ in $K^\perp_{\delta_\varepsilon\(t\),\xi}$ which solves equation \eqref{Eq17} and satisfies
\begin{equation}\label{Pr1Eq1}
\left\|\phi_{\delta_\varepsilon\(t\),\xi}\right\|_h\le C_{a,b}\left\{\begin{aligned} 
&\varepsilon\left|\ln\varepsilon\right|&&\text{if }3\leq n\le6\\
&\varepsilon^{\frac{4}{n-2}}&&\text{if }n\ge7\\
&\varepsilon^{\frac{n+2}{2\(n-2\)}}&&\text{if }n\ge7,\,h\equiv c_n\Scal_g,\text{ and }(M,g)\text{ is loc. conf. flat}.
\end{aligned}\right.
\end{equation}
Moreover,\,$\phi_{\delta_\varepsilon\(t\),\xi}$ is continuously differentiable with respect to $t$ and $\xi$.
\end{proposition}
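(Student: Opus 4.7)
The plan is to realize \eqref{Eq17} as a fixed-point equation on the Banach space $K^\perp_{\delta_\eps(t),\xi}$, linearizing the map $\phi\mapsto u_0-W_{\delta_\eps(t),\xi}+\phi-i^*\!\(f_\eps(u_0-W_{\delta_\eps(t),\xi}+\phi)\)$ at $\phi=0$. Write this as
\begin{equation*}
L_{\eps,t,\xi}(\phi)=R_{\eps,t,\xi}+N_{\eps,t,\xi}(\phi),
\end{equation*}
where $L_{\eps,t,\xi}:K^\perp_{\delta_\eps(t),\xi}\to K^\perp_{\delta_\eps(t),\xi}$ is the linear operator
\begin{equation*}
L_{\eps,t,\xi}(\phi):=\varPi^\perp_{\delta_\eps(t),\xi}\(\phi-i^*\(f'_\eps(u_0-W_{\delta_\eps(t),\xi})\phi\)\),
\end{equation*}
the error $R_{\eps,t,\xi}:=-\varPi^\perp_{\delta_\eps(t),\xi}\(u_0-W_{\delta_\eps(t),\xi}-i^*(f_\eps(u_0-W_{\delta_\eps(t),\xi}))\)$ measures how well the ansatz $u_0-W_{\delta_\eps(t),\xi}$ solves the equation, and $N_{\eps,t,\xi}(\phi)$ collects the higher-order nonlinearities. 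Solving \eqref{Eq17} amounts to the fixed-point equation $\phi=L_{\eps,t,\xi}^{-1}(R_{\eps,t,\xi}+N_{\eps,t,\xi}(\phi))$, which I will handle by Banach's contraction mapping theorem on a ball of radius equal to the right-hand side of \eqref{Pr1Eq1}.

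The main step, and the hardest, is to show that $L_{\eps,t,\xi}$ is invertible with inverse uniformly bounded in $(\eps,t,\xi)$ for $t\in[a,b]$ and $\xi\in M$. I argue by contradiction: assume there exist sequences $(\eps_k,t_k,\xi_k)\to(0,t_\infty,\xi_\infty)$ and $\phi_k\in K^\perp_{\delta_{\eps_k}(t_k),\xi_k}$ with $\|\phi_k\|_h=1$ and $\|L_{\eps_k,t_k,\xi_k}(\phi_k)\|_h\to 0$. After passing to a subsequence, one splits $\phi_k$ into a ``bubble profile'' captured by the rescaling $\tilde\phi_k(y):=\delta_{\eps_k}(t_k)^{(n-2)/2}\varLambda_{\xi_k}^{-1}\phi_k(\exp_{\xi_k}(\delta_{\eps_k}(t_k)y))$ and a ``regular profile'' $\phi_k\rightharpoonup\phi_\infty$ in $H^2_1(M)$. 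Passing to the limit, $\tilde\phi_k$ converges weakly in $D_1^2(\rn)$ to a solution of $\Delta_{\Eucl}v=(\crit-1)U^{\crit-2}v$, which by Bianchi--Egnell must be a linear combination of $V_0,\ldots,V_n$; the orthogonality to $K_{\delta_{\eps_k}(t_k),\xi_k}$ built into \eqref{Eq15} forces this limit to vanish. Meanwhile $\phi_\infty$ belongs to $K_{h,u_0}$ (the kernel defined in \eqref{def:K}), which is trivial by the nondegeneracy hypothesis on $u_0$. A standard concentration-compactness / test-function argument (testing against $\phi_k$ itself, controlling interaction terms away from $\xi_k$) then yields $\|\phi_k\|_h\to 0$, contradicting the normalization. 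This gives the existence of a uniform constant $C_{a,b}>0$ with $\|L_{\eps,t,\xi}^{-1}\|\le C_{a,b}$.

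Once $L_{\eps,t,\xi}^{-1}$ is uniformly bounded, I estimate the error $R_{\eps,t,\xi}$ by computing $f_\eps(u_0-W_{\delta_\eps(t),\xi})-u_0^{\crit-1}+W_{\delta_\eps(t),\xi}^{\crit-1}$ and identifying the dominant contributions as the sum of (i) the shift $|u|^{\crit-2-\eps}u-|u|^{\crit-2}u$ coming from the subcritical perturbation (contributing $O(\eps|\ln\eps|)$ in low dimension and $O(\eps)$ in higher dimension via the integral $\int W^{\crit}(\eps\log W)$), (ii) the Riemannian corrections to the flat bubble (the bending of the metric and the cutoff $\chi$, producing the $\delta_\eps^2=O(\eps^{4/(n-2)})$ term in $n\ge 7$ and disappearing in the locally conformally flat case, which leaves only $O(\delta_\eps^{(n-2)/2})=O(\eps)$ from the cutoff plus the subcritical term), and (iii) the cross interaction $W^{\crit-2}u_0$. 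Summing these yields the right-hand side of \eqref{Pr1Eq1}; the sharper exponent $\eps^{(n+2)/(2(n-2))}$ in the locally conformally flat case uses the vanishing of the Weyl tensor and the resulting cancellation in the Taylor expansion of the metric. For the nonlinearity $N_{\eps,t,\xi}$, convexity of $f_\eps$ and Sobolev embeddings give $\|N(\phi)-N(\phi')\|_h\le C(\|\phi\|_h+\|\phi'\|_h)^{\min(1,\crit-2)}\|\phi-\phi'\|_h$, so $T_{\eps,t,\xi}(\phi):=L^{-1}(R+N(\phi))$ is a contraction on $\{\|\phi\|_h\le 2C_{a,b}\|R_{\eps,t,\xi}\|_h\}$ for $\eps$ small, yielding a unique fixed point $\phi_{\delta_\eps(t),\xi}$ satisfying \eqref{Pr1Eq1}.

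Finally, continuous differentiability of $\phi_{\delta_\eps(t),\xi}$ in $(t,\xi)$ follows from the implicit function theorem applied to the $C^1$ map $(\phi,t,\xi)\mapsto\phi-T_{\eps,t,\xi}(\phi)$, whose partial derivative in $\phi$ at the fixed point is $I-DT$, uniformly invertible by the contraction property and the uniform invertibility of $L_{\eps,t,\xi}$ established above. The expected main obstacle is indeed the uniform invertibility of $L_{\eps,t,\xi}$ with constants independent of the blow-up scale, because it simultaneously requires nondegeneracy of $u_0$ on the macroscopic scale and the Bianchi--Egnell nondegeneracy on the bubble scale, glued together by the projections $\varPi^\perp_{\delta_\eps(t),\xi}$.
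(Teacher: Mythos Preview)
Your proposal is correct and follows essentially the same approach as the paper: the same decomposition $L(\phi)=R+N(\phi)$, the same contradiction argument for the uniform invertibility of $L$ (using nondegeneracy of $u_0$ at the macroscopic scale and Bianchi--Egnell at the bubble scale, glued via the orthogonality in $K^\perp$), the same error estimate for $R$ (which the paper carries out in a separate lemma), and the same contraction argument. The only cosmetic difference is that the paper first shows the coefficients $\lambda_{i,\alpha}$ in the projection vanish before extracting the two weak limits, but this is implicit in your outline.
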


\noindent The sequel of this section is devoted to the proof of Proposition \ref{Pr1}. For $\varepsilon$ small, for any positive real number $\delta$, and any point $\xi$ in $M$, we let the map $L_{\varepsilon,\delta,\xi}:K^\perp_{\delta,\xi}\to K^\perp_{\delta,\xi}$ be defined by
\begin{equation}\label{Eq20}
L_{\varepsilon,\delta,\xi}\(\phi\):=\varPi^\perp_{\delta,\xi}\(\phi-i^*\(f'_\varepsilon\(u_0-W_{\delta,\xi}\)\phi\)\)
\end{equation}
for all $\phi\in K^\perp_{\delta,\xi}$, where $u_0\in C^{2,\theta}(M)$ is a nondegenerate positive solution to \eqref{Th2Eq}, $W_{\delta,\xi}$ is as in \eqref{Eq9} and $K^\perp_{\delta,\xi}$ is as in \eqref{Eq15}. Clearly, we get that $L_{\varepsilon,\delta,\xi}$ is linear and continuous. In Lemma~\ref{Lem1} below, we prove the invertibility of $L_{\varepsilon,\delta,\xi}$ for $\delta$ and $\varepsilon$ small.

\begin{lemma}\label{Lem1}
Given two positive real numbers $a<b$, for $\varepsilon$ small, the map $L_{\varepsilon,\delta_\varepsilon\(t\),\xi}$ is invertible for all real numbers $t$ in $\[a,b\]$ and all points $\xi$ in $M$, where $\delta_\varepsilon\(t\)=t\varepsilon^{2/\(n-2\)}$ and $L_{\varepsilon,\delta_\varepsilon\(t\),\xi}$ is as in \eqref{Eq20}. Moreover, there exists a positive constant $C_{a,b}$ such that for $\varepsilon$ small, for any real number $t\in \[a,b\]$, any point $\xi\in M$, and any function $\phi\in K^\perp_{\delta_\varepsilon\(t\),\xi}$, there holds
\begin{equation}\label{Lem1Eq1}
\left\|L_{\varepsilon,\delta_\varepsilon\(t\),\xi}\(\phi\)\right\|_h\ge C_{a,b}\left\|\phi\right\|_h\,.
\end{equation}
In particular, the inverse map $L^{-1}_{\varepsilon,\delta_\varepsilon\(t\),\xi}$ is continuous.
\end{lemma}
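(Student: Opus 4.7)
The plan is a standard Lyapunov--Schmidt contradiction argument combining a weak-limit step (using the nondegeneracy of $u_0$) with a rescaling step (using the Bianchi--Egnell classification of the kernel at a bubble). Suppose \eqref{Lem1Eq1} fails. Then, after extracting subsequences, there exist $\varepsilon_k\to0$, $t_k\to t_\infty\in[a,b]$, $\xi_k\to\xi_\infty\in M$, and $\phi_k\in K^\perp_{\delta_k,\xi_k}$ with $\delta_k:=\delta_{\varepsilon_k}(t_k)$, $\|\phi_k\|_h=1$, and $\|L_{\varepsilon_k,\delta_k,\xi_k}(\phi_k)\|_h\to0$. Unfolding the definition of $L_{\varepsilon,\delta,\xi}$ and using $\phi_k\in K^\perp_{\delta_k,\xi_k}$ gives
\[
\phi_k=i^*\bigl(f'_{\varepsilon_k}(u_0-W_{\delta_k,\xi_k})\phi_k\bigr)+L_{\varepsilon_k,\delta_k,\xi_k}(\phi_k)+\lambda_kZ_{\delta_k,\xi_k}+Z_{\delta_k,\xi_k,\omega_k}
\]
with $\lambda_k\in\rr$ and $\omega_k\in T_{\xi_k}M$ that are seen to remain bounded by pairing the identity against $Z_{\delta_k,\xi_k}$ and $Z_{\delta_k,\xi_k,\omega}$ and exploiting the near-orthogonality of these directions under $\langle\cdot,\cdot\rangle_h$.

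First, I would pass to the weak limit in $H^2_1(M)$: up to a subsequence $\phi_k\rightharpoonup\phi_\infty$ weakly, $\phi_k\to\phi_\infty$ strongly in $L^p(M)$ for every $p<\crit$. Since $W_{\delta_k,\xi_k}$, $Z_{\delta_k,\xi_k}$ and $Z_{\delta_k,\xi_k,\omega_k}$ all concentrate at $\xi_\infty$, and $f'_{\varepsilon_k}(u_0-W_{\delta_k,\xi_k})\to(\crit-1)u_0^{\crit-2}$ in $L^{n/2}_{\mathrm{loc}}(M\setminus\{\xi_\infty\})$ with vanishing $L^{n/2}$-contribution near $\xi_\infty$, the weak limit satisfies $\Delta_g\phi_\infty+h\phi_\infty=(\crit-1)u_0^{\crit-2}\phi_\infty$ in $M$. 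The nondegeneracy hypothesis $K_{h,u_0}=\{0\}$ then forces $\phi_\infty\equiv0$.

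Second, I would detect possible concentration at scale $\delta_k$ via the rescaled functions
\[
\tilde\phi_k(y):=\delta_k^{\frac{n-2}{2}}\varLambda_{\xi_k}\bigl(\exp^{g_{\xi_k}}_{\xi_k}(\delta_ky)\bigr)\,\phi_k\bigl(\exp^{g_{\xi_k}}_{\xi_k}(\delta_ky)\bigr)
\]
defined on $B(0,r_0/\delta_k)\subset\rn$. These are uniformly bounded in $D^2_1(\rn)$; the pulled-back metric converges to the Euclidean one, the rescaled bubble converges to $U$ in $C^1_{\mathrm{loc}}$, and the $u_0$-part rescales away, so the weak limit $\tilde\phi_\infty$ solves $\Delta_{\Eucl}\tilde\phi_\infty=(\crit-1)U^{\crit-2}\tilde\phi_\infty$ on $\rn$. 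By Bianchi--Egnell, $\tilde\phi_\infty\in\Span(V_0,\dots,V_n)$. The orthogonality conditions $\<\phi_k,Z_{\delta_k,\xi_k}\>_h=\<\phi_k,Z_{\delta_k,\xi_k,\omega}\>_h=0$ translate, after rescaling, into $\int_{\rn}\langle\nabla\tilde\phi_\infty,\nabla V_i\rangle\,dy=0$ for $i=0,\dots,n$, which forces $\tilde\phi_\infty\equiv0$.

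To close the argument, I would test the displayed identity for $\phi_k$ against $\phi_k$ and carry out a Brezis--Lieb/Struwe type splitting of $\int_Mf'_{\varepsilon_k}(u_0-W_{\delta_k,\xi_k})\phi_k^2\,dv_g$ into its macroscopic and bubble-scale contributions; using $\phi_\infty\equiv0$ and $\tilde\phi_\infty\equiv0$ this yields
\[
\|\phi_k\|_h^2=(\crit-1)\int_Mu_0^{\crit-2}\phi_\infty^2\,dv_g+(\crit-1)\int_{\rn}U^{\crit-2}\tilde\phi_\infty^2\,dy+\o(1)=\o(1),
\]
contradicting $\|\phi_k\|_h=1$. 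Invertibility of $L_{\varepsilon,\delta_\varepsilon(t),\xi}$ then follows because the map is of the form identity minus a compact operator (compactness of $\phi\mapsto i^*(f'_\varepsilon(u_0-W_{\delta,\xi})\phi)$ from $H^2_1(M)$ into itself is standard), hence Fredholm of index zero; the uniform lower bound \eqref{Lem1Eq1} gives injectivity and continuity of the inverse simultaneously. The main technical obstacle I anticipate is the energy-splitting identity above: ruling out loss of mass at intermediate scales in the presence of both the regular background $u_0$ and the concentrating bubble $W_{\delta_k,\xi_k}$ requires a careful concentration--compactness analysis of the quadratic form.
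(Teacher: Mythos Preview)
Your proof is correct and follows the same route as the paper: contradiction, weak limit on $M$ killed by the nondegeneracy of $u_0$, rescaled limit on $\rn$ killed by Bianchi--Egnell plus the orthogonality constraints, then testing against $\phi_k$ to force $\|\phi_k\|_h\to0$, with invertibility via the Fredholm alternative. Regarding the obstacle you flag, the paper bypasses any intermediate-scale concentration--compactness by the crude pointwise bound $|u_0-W_{\delta_k,\xi_k}|^{\crit-2-\varepsilon_k}\le C\bigl(1+W_{\delta_k,\xi_k}^{\crit-2-\varepsilon_k}\bigr)$, which reduces the energy to $C\int_M\phi_k^2\,dv_g+C\int_{\rn}U^{\crit-2-\varepsilon_k}\tilde\phi_k^2\,dv_{\tilde g_k}+o(1)$; both terms vanish directly from $\phi_k\rightharpoonup0$ in $H_1^2(M)$ (Rellich) and $\tilde\phi_k\rightharpoonup0$ in $D_1^2(\rn)$ together with $U^{\crit-2}\in L^{n/2}(\rn)$.
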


\proof
We prove \eqref{Lem1Eq1}. We proceed by contradiction. We assume that there exist two sequences of positive real numbers $\(\varepsilon_\alpha\)_\alpha$ and $\(t_\alpha\)_\alpha$ such that $\varepsilon_\alpha\to0$ for all $\alpha\to +\infty$ and $a\le t_\alpha\le b$, a sequence of points $\(\xi_\alpha\)_\alpha$ in $M$, and a sequence of functions $\(\phi_\alpha\)_\alpha$ such that
\begin{equation}\label{Lem1Eq2}
\phi_\alpha\in K^\perp_{\delta_{\varepsilon_\alpha}\(t_\alpha\),\xi_\alpha}\,,\quad\left\|\phi_\alpha\right\|_h=1\,,\quad\text{and }\left\|L_{\varepsilon_\alpha,\delta_{\varepsilon_\alpha}\(t_\alpha\),\xi_\alpha}\(\phi_\alpha\)\right\|_h\longrightarrow0
\end{equation}
as $\alpha\to+\infty$. We define $W_\alpha:= W_{\delta_{\varepsilon_\alpha}\(t_\alpha\),\xi_\alpha}$. First, we claim that
\begin{equation}\label{Lem1Eq3}
\left\|\phi_\alpha-i^*\(f'_{\varepsilon_\alpha}\(u_0-W_{\alpha}\)\phi_\alpha\)\right\|_h\longrightarrow0
\end{equation}
as $\alpha\to+\infty$. Passing if necessary to a subsequence, we may assume that all the points $\xi_\alpha$ belong to a small open subset $\varOmega$ in $M$ on which there exists a smooth orthonormal frame. Thanks to this frame, we identify the tangent space $T_\xi M$ with $\mathbb{R}^n$ for all points $\xi$ in $\varOmega$, so that $\exp_\xi$ is in fact the composition of the standard exponential map with a linear isometry $\varPsi_\xi:\mathbb{R}^n\to T_\xi M$ which is smooth with respect to $\xi$. We define
\begin{equation}\label{Lem1Eq4}
Z_{0,\alpha}:=Z_{\delta_{\varepsilon_\alpha}\(t_{\alpha}\),\xi_\alpha}\quad\text{and}\quad Z_{i,\alpha}=Z_{\delta_{\varepsilon_\alpha}\(t_{\alpha}\),\xi_\alpha,e_i}
\end{equation}
for all $i=1,\dotsc,n$, where $e_i$ is the $i$-th vector in the canonical basis of $\mathbb{R}^n$ and the functions $Z_{\delta_{\varepsilon_\alpha}\(t_{\alpha}\),\xi_\alpha}$ and $Z_{\delta_{\varepsilon_\alpha}\(t_{\alpha}\),\xi_\alpha,e_i}$ are as in \eqref{Eq12}--\eqref{Eq13}. For any $\alpha$, by definition of $L_{\varepsilon,\delta_{\varepsilon_\alpha}\(t_\alpha\),\xi_\alpha}$, we get that
\begin{equation}\label{Lem1Eq5}
\phi_\alpha-i^*\(f'_{\varepsilon_\alpha}\(u_0-W_{\alpha}\)\phi_\alpha\)-L_{\varepsilon_\alpha,\delta_{\varepsilon_\alpha}\(t_\alpha\),\xi_\alpha}\(\phi_\alpha\)=\sum_{i=0}^n\lambda_{i,\alpha}Z_{i,\alpha}
\end{equation}
for some real numbers $\lambda_{i,\alpha}$, where the functions $Z_{i,\alpha}$ are as in \eqref{Lem1Eq4}. Taking into account \eqref{Lem1Eq2} and \eqref{Lem1Eq5}, one sees that in order to get \eqref{Lem1Eq3}, it suffices to prove that $\lambda_{i,\alpha}\to0$ as $\alpha\to+\infty$ for all $i=0,\dotsc,n$. As is easily checked, for any $i,j=0,\dotsc,n$, there holds
\begin{equation}\label{Lem1Eq6}
\<Z_{i,\alpha},Z_{j,\alpha}\>_h\longrightarrow\left\|\nabla V_i\right\|_2^2\delta_{ij}
\end{equation}
as $\alpha\to +\infty$ where the function $V_i$ is as in \eqref{Eq11} and the real numbers $\delta_{ij}$ are the Kronecker symbols. By \eqref{Lem1Eq5}, \eqref{Lem1Eq6}, and since the functions $\phi_\alpha$ and $L_{\varepsilon_\alpha,\delta_{\varepsilon_\alpha}\(t_\alpha\),\xi_\alpha}\(\phi_\alpha\)$ belong to $K^\perp_{\delta_{\varepsilon_\alpha}\(t_\alpha\),\xi_\alpha}$, for any $i=0,\dotsc,n$, we get that
\begin{equation}\label{Lem1Eq7}
\int_Mf'_{\varepsilon_\alpha}\(u_0-W_{\alpha}\)Z_{i,\alpha}\phi_\alpha dv_g=-\lambda_{i,\alpha}\left\|\nabla V_i\right\|_2^2+\o\(\sum_{j=0}^n\left|\lambda_{j,\alpha}\right|\)
\end{equation}
as $\alpha\to+\infty$. As is easily checked, we get that
\begin{align}
&\int_Mf'_{\varepsilon_\alpha}\(u_0-W_{\alpha}\)Z_{i,\alpha}\phi_\alpha dv_g=\int_Mf'_{\varepsilon_\alpha}\(W_{\alpha}\)Z_{i,\alpha}\phi_\alpha dv_g+\o\(1\)\label{Lem1Eq12}
\end{align}
as $\alpha\to+\infty$. We find
\begin{align}
&\frac{\int_Mf'_{\varepsilon_\alpha}\(W_{\alpha}\)Z_{i,\alpha}\phi_\alpha dv_g}{\crit-1-\varepsilon_\alpha}=\delta_{\varepsilon_\alpha}\(t_{\alpha}\)^{\frac{n-2}{2}\varepsilon_\alpha}\int_{\mathbb{R}^n}\chi_\alpha \Lambda_\alpha^{\crit-1-\epsilon_\alpha} U^{2^*-2-\varepsilon_\alpha}V_i\widetilde{\phi}_\alpha dv_{\widetilde{g}_\alpha}\label{Lem1Eq13}
\end{align}
as $\alpha\to+\infty$, where the functions $U$ and $V_i$ are as in \eqref{Eq10} and \eqref{Eq11}, the cutoff function $\chi$ is as in Section \ref{sec:finite} and
\begin{align}
&\chi_\alpha:=\chi\(\delta_{\varepsilon_\alpha}\(t_{\alpha}\)\left|x\right|\)^{2^*-2-\varepsilon_\alpha},\nonumber\\
&\Lambda_\alpha:= \Lambda_{\xi_\alpha}(\hbox{exp}_{\xi_\alpha}(\delta_{\varepsilon_\alpha}\(t_{\alpha}\) x)),\nonumber\\
&\widetilde{\phi}_\alpha\(x\):=\delta_{\varepsilon_\alpha}\(t_{\alpha}\)^{\frac{n-2}{2}}\chi\(\delta_{\varepsilon_\alpha}\(t_{\alpha}\)\left|x\right|\)\phi_\alpha\(\exp_{\xi_\alpha}\(\delta_{\varepsilon_\alpha}\(t_{\alpha}\)x\)\),\label{Lem1Eq14}\\
&\widetilde{g}_\alpha\(x\):=\exp_{\xi_\alpha}^*g\(\delta_{\varepsilon_\alpha}\(t_{\alpha}\)x\)\label{Lem1Eq15}
\end{align}
for any $\alpha\to +\infty$ and $x\in\rn$ small enough. In the definitions above, the exponential map is taken with respect to the metric $g_{\xi_\alpha}$. Since $\(\phi_\alpha\)_\alpha$ is bounded in $H^2_1\(M\)$, we get that $\big(\widetilde{\phi}_\alpha\big)_\alpha$ is bounded in $D^{1,2}\(\mathbb{R}^n\)$. Passing to a subsequence, we may assume that $\big(\widetilde{\phi}_\alpha\big)_\alpha$ converges weakly to some function $\tilde{\phi}$ in $D^{1,2}\(\mathbb{R}^n\)$. Passing to the limit into \eqref{Lem1Eq13} yields
\begin{equation}\label{Lem1Eq16}
\int_Mf'_{\varepsilon_\alpha}\(W_{\alpha}\)Z_{i,\alpha}\phi_\alpha dv_g\longrightarrow\(2^*-1\)\int_{\mathbb{R}^n}U^{2^*-2}V_i\tilde{\phi} dx
\end{equation}
as $\alpha\to+\infty$. Since the function $V_i$ satisfies the equation $\Delta_{\Eucl}V_i=\(2^*-1\)U^{2^*-2}V_i$ in $\mathbb{R}^n$, and since, for any $\alpha$, the function $\phi_\alpha$ belongs to $K^\perp_{\delta_{\varepsilon_\alpha}\(t_\alpha\),\xi_\alpha}$, passing to the limit as $\alpha\to+\infty$ into the equation $\<Z_{i,\alpha},\phi_\alpha\>_h=0$, we get that
\begin{equation}\label{Lem1Eq17}
\int_{\mathbb{R}^n}\<\nabla V_i,\nabla\tilde{\phi}\>dx=\(2^*-1\)\int_{\mathbb{R}^n}U^{2^*-2}V_i\tilde{\phi} dx=0\,.
\end{equation}
By \eqref{Lem1Eq7}, \eqref{Lem1Eq12}, \eqref{Lem1Eq16}, and \eqref{Lem1Eq17}, we get that
$$\lambda_{i,\alpha}=\o\(1\)+\o\(\sum_{j=0}^n\left|\lambda_{j,\alpha}\right|\)$$
as $\alpha\to+\infty$. It follows that $\lambda_{i,\alpha}\to0$ as $\alpha\to+\infty$ for all $i=0,\dotsc,n$. The claim \eqref{Lem1Eq3} then follows from \eqref{Lem1Eq2} and \eqref{Lem1Eq5}. 

\medskip\noindent For any sequence $\(\varphi_\alpha\)_\alpha$ in $H^2_1\(M\)$, and by \eqref{Lem1Eq3}, we get that
\begin{eqnarray}
&&\left|\<\phi_\alpha,\varphi_\alpha\>_h-\int_Mf'_{\varepsilon_\alpha}\(u_0-W_{\alpha}\)\phi_\alpha\varphi_\alpha
dv_g\right|\label{Lem1Eq18}\\
&&\qquad=\left|\<\phi_\alpha-i^*\(f'_{\varepsilon_\alpha}\(u_0-W_{\alpha}\)\phi_\alpha\),\varphi_\alpha\>_h\right|\nonumber\\
&&\qquad\le\left\|\phi_\alpha-i^*\(f'_{\varepsilon_\alpha}\(u_0-W_{\alpha}\)\phi_\alpha\)\right\|_h\left\|\varphi_\alpha\right\|_h=\o\(\left\|\varphi_\alpha\right\|_h\)\nonumber
\end{eqnarray}
as $\alpha\to+\infty$. \par

\medskip\noindent We claim that $\phi_\alpha\rightharpoonup 0$ weakly in $H_1^2(M)$ when $\alpha\to +\infty$. We prove the claim. Since $(\phi_\alpha)$ is bounded in $H_1^2(M)$, up to a subsequence, there exists $\phi\in H_1^2(M)$ such that $(\phi_\alpha)\rightharpoonup \phi$ weakly in $H_1^2(M)$ when $\alpha\to +\infty$. Then for any $\varphi\in H_1^2(M)$, taking $\varphi_\alpha\equiv \varphi$ in \eqref{Lem1Eq18} and letting $\alpha\to +\infty$ yields
$$\langle \phi,\varphi\rangle_h=\int_M (\crit-1)u_0^{\crit-2}\phi\varphi\, dv_g$$
for all $\varphi\in H_1^2(M)$, and then $\Delta_g\phi+h\phi=(\crit-1)u_0^{\crit-2}\phi$, which implies $\phi\equiv 0$ since $u_0$ is nondegenerate. This proves the claim.

\medskip\noindent We claim that $\tilde{\phi}_\alpha\rightharpoonup 0$ weakly in $D_1^2(\rn)$ when $\alpha\to +\infty$, where $\tilde{\phi}_\alpha$ has been defined in \eqref{Lem1Eq14}. We prove the claim. Given a smooth function $\varphi$ with compact support in $\mathbb{R}^n$, we define
$$\varphi_\alpha\(x\):=\chi\(d_{g_{\xi_\alpha}}\(x,\xi_\alpha\)\)\delta_{\varepsilon_\alpha}\(t_\alpha\)^{\frac{2-n}{2}}\varphi\(\delta_{\varepsilon_\alpha}\(t_\alpha\)^{-1}\exp_{\xi_\alpha}^{-1}\(x\)\)\,$$
for all $x\in M$. It follows from \eqref{Lem1Eq18} together with a change of variable that
\begin{eqnarray}
&&\int_{\mathbb{R}^n}\Lambda_\alpha^{-2}\big<\nabla\widetilde{\phi}_\alpha,\nabla\varphi\big>_{\widetilde{g}_\alpha}dv_{\widetilde{g}_\alpha}+\delta_{\varepsilon_\alpha}\(t_\alpha\)^2\int_{\mathbb{R}^n}\Lambda_\alpha^{-\crit}h\(\exp_{\xi_\alpha}\(\delta_{\varepsilon_\alpha}\(t_\alpha\)x\)\)\widetilde{\phi}_\alpha\varphi dv_{\widetilde{g}_\alpha}\label{Lem1Eq19}\\
&&=\delta_{\varepsilon_\alpha}\(t_\alpha\)^2\int_{\mathbb{R}^n}\Lambda_\alpha^{-\crit}f'_{\varepsilon_\alpha}\(u_{0,\alpha}(x)-W_{\alpha}\(\exp_{\xi_\alpha}\(\delta_{\varepsilon_\alpha}\(t_\alpha\)x\)\)\)\widetilde{\phi}_\alpha\varphi dv_{\widetilde{g}_\alpha}+\o\(1\)\nonumber
\end{eqnarray}
as $\alpha\to+\infty$, where $u_{0,\alpha}(\cdot):=u_0\(\exp_{\xi_\alpha}\(\delta_{\varepsilon_\alpha}\(t_\alpha\)\cdot\)\)$, $\widetilde{\phi}_\alpha$ and $\widetilde{g}_\alpha$ are as in \eqref{Lem1Eq14} and \eqref{Lem1Eq15}. One easily checks that 
\begin{equation}
\Lambda_\alpha^{-\crit}\delta_{\varepsilon_\alpha}\(t_\alpha\)^2f'_{\varepsilon_\alpha}\(u_0\(\exp_{\xi_\alpha}\(\delta_{\varepsilon_\alpha}\(t_\alpha\)\cdot\)\)-W_{\alpha}\(\exp_{\xi_\alpha}\(\delta_{\varepsilon_\alpha}\(t_\alpha\)\cdot\)\)\)\nonumber
\end{equation}
goes to $\(2^*-1\)U^{2^*-2}$ as $\alpha\to+\infty$ in $C^0_{loc}\(\mathbb{R}^n\)$. Moreover, since $\big(\widetilde{\phi}_\alpha\big)_\alpha$ converges weakly to $\tilde{\phi}$ in $D_1^{2}\(\mathbb{R}^n\)$, passing to the limit into \eqref{Lem1Eq19} as $\alpha\to+\infty$ yields
\begin{equation}\label{Lem1Eq21}
\int_{\mathbb{R}^n}\<\nabla\tilde{\phi},\nabla\varphi\>dx=\(2^*-1\)\int_{\mathbb{R}^n}U^{2^*-2}\tilde{\phi}\varphi dx\,.
\end{equation}
Since \eqref{Lem1Eq21} holds for all smooth functions $\varphi$ with compact support in $\mathbb{R}^n$, we get that the function $\tilde{\phi}$ satisfies the equation $\Delta_{\Eucl}\tilde{\phi}=\(2^*-1\)U^{2^*-2}\tilde{\phi}$ in $\mathbb{R}^n$. By Bianchi--Egnell~\cite{BiaEgn}, it follows that $\tilde{\phi}=\sum_{i=0}^n\lambda_iV_i$ for some real numbers $\lambda_i$. It then follows from the orthogonality condition \eqref{Lem1Eq17} that $\tilde{\phi}\equiv 0$ is identically zero. This proves the claim.

\medskip\noindent Letting $\varphi_\alpha:=\phi_\alpha$ and using \eqref{Lem1Eq18} together with a change of variable, we get
\begin{align}
\left\|\phi_\alpha\right\|^2_h&=(\crit-1-\eps_\alpha)\int_{M}\left|u_0-W_{\alpha}\right|^{\crit-2-\eps_\alpha}\phi_\alpha^2\, dv_g+\o\(1\)\nonumber\\
&\leq C\int_M\phi_\alpha^2\, dv_g+C\int_M \left|W_{\alpha}\right|^{\crit-2-\eps_\alpha}\phi_\alpha^2\, dv_g+\o\(1\)\nonumber\\
&\leq  C\int_M\phi_\alpha^2\, dv_g+C\int_M |U|^{\crit-2-\eps_\alpha}\tilde\phi_\alpha^2\, dv_{\tilde{g}_\alpha}+\o\(1\)\nonumber
\end{align}
as $\alpha\to+\infty$, where $\widetilde{\phi}_\alpha$ and $\widetilde{g}_\alpha$ are as in \eqref{Lem1Eq14} and \eqref{Lem1Eq15}. Since $\phi_\alpha\to 0$ strongly in $L^2(M)$, $\big(\widetilde{\phi}_\alpha^2\big)_\alpha$ is bounded in $L^{\frac{n}{n-2}}\(\mathbb{R}^n\)$ and converges almost everywhere to $0$, standard elliptic theory yields $\phi_\alpha\to0$ as $\alpha\to+\infty$ in $H^2_1\(M\)$. This is a contradiction with \eqref{Lem1Eq2}. This ends the proof of \eqref{Lem1Eq1}. 

\medskip\noindent The invertibility of $L_{\varepsilon,\delta_\varepsilon\(t\),\xi}$ follows from the Fredholm alternative. This ends the proof of Lemma~\ref{Lem1}.
\endproof
\noindent Now, we prove Proposition~\ref{Pr1} by using Lemma~\ref{Lem1} together with the error estimate in Section \ref{sec:error}.

\proof[Proof of Proposition~\ref{Pr1}]
We let $a$ and $b$ be two positive real numbers such that $a<b$. For $\varepsilon$ small, for any real number $t$ in $\[a,b\]$, and any point $\xi$ in $M$, equation \eqref{Eq17} is equivalent to 
\begin{equation}\label{Pr1Eq2}
L_{\varepsilon,\delta_\varepsilon\(t\),\xi}\(\phi\)=N_{\varepsilon,\delta_\varepsilon\(t\),\xi}\(\phi\)+R_{\varepsilon,\delta_\varepsilon\(t\),\xi}\,,
\end{equation}
where $\delta_\varepsilon\(t\)=t\varepsilon^{2/\(n-2\)}$, $L_{\varepsilon,\delta_\varepsilon\(t\),\xi}$ is as in \eqref{Eq20}, and
\begin{align}
N_{\varepsilon,\delta_\varepsilon\(t\),\xi}\(\phi\)&:=\varPi^\perp_{\delta_\varepsilon\(t\),\xi}\big(i^*
\big(f_\varepsilon\(u_0-W_{\delta_\varepsilon\(t\),\xi}+\phi\)\label{Pr1Eq3}\\
&\qquad-f_\varepsilon\(u_0-W_{\delta_\varepsilon\(t\),\xi}\)-f'_\varepsilon\(u_0-W_{\delta_\varepsilon\(t\),\xi}\)\phi\big)\big)\nonumber
\end{align}
and
\begin{equation}\label{Pr1Eq4}
R_{\varepsilon,\delta_\varepsilon\(t\),\xi}:=\varPi^\perp_{\delta_\varepsilon\(t\),\xi}\(i^*\(f_\varepsilon\left(u_0-W_{\delta_\varepsilon\(t\),\xi}\right)\)-u_0+W_{\delta_\varepsilon\(t\),\xi}\).
\end{equation}
By Lemma~\ref{Lem1}, for $\varepsilon$ small, we get that the map $L_{\varepsilon,\delta_\varepsilon\(t\),\xi}$ is invertible for all real numbers $t$ in $\[a,b\]$ and all points $\xi$ in $M$. We then let the map $T_{\varepsilon,\delta_\varepsilon\(t\),\xi}:K^\perp_{\delta_\varepsilon\(t\),\xi}\to K^\perp_{\delta_\varepsilon\(t\),\xi}$ be defined for all $\phi\in K^\perp_{\delta_\varepsilon\(t\),\xi}$ by
$$T_{\varepsilon,\delta_\varepsilon\(t\),\xi}\(\phi\):=L^{-1}_{\varepsilon,\delta_\varepsilon\(t\),\xi}\(N_{\varepsilon,\delta_\varepsilon\(t\),\xi}\(\phi\)+R_{\varepsilon,\delta_\varepsilon\(t\),\xi}\),$$
where $N_{\varepsilon,\delta_\varepsilon\(t\),\xi}\(\phi\)$ and $R_{\varepsilon,\delta_\varepsilon\(t\),\xi}$ are as in \eqref{Pr1Eq3} and \eqref{Pr1Eq4}. For any positive real number $\varXi$, we let $B_{\varepsilon,\delta_\varepsilon\(t\),\xi}\(\varXi\)$ be the closed ball defined by
$$\overline{B}_{\varepsilon,\delta_\varepsilon\(t\),\xi}\(\varXi\):=\left\{\phi\in K^\perp_{\delta_\varepsilon\(t\),\xi}\, /\, \ \left\|\phi\right\|_h\le\varXi\nu_\varepsilon\right\},$$
where $\nu_\varepsilon>0$ is the error obtained in Lemma \ref{Lem4} of Section \ref{sec:error}, namely
\begin{equation}\label{Pr1Eq5}
\nu_\varepsilon:=\left\{\begin{aligned} 
&\varepsilon\left|\ln\varepsilon\right|&&\text{if }n\le6\\
&\varepsilon^{\frac{4}{n-2}}&&\text{if }n\ge7\\
&\varepsilon^{\frac{n+2}{2\(n-2\)}}&&\text{if }n\ge7,\,h\equiv c_n\Scal_g,\text{ and }(M,g)\text{ loc. conformally flat}.
\end{aligned}\right.
\end{equation}
\medskip\noindent We fix $\theta_0\in (0,\min\{1,\crit-2\})$, so that $u\mapsto f_\eps(u)$ is locally in $C^{1,\theta_0}$ on $H_1^2(M)$ uniformly with respect to $\eps>0$ small. By Lemma~\ref{Lem1} and by continuity of $i^*$,  for $\varepsilon$ small, for any real number $t$ in $\[a,b\]$, any point $\xi$ in $M$, and any functions $\phi$, $\phi_1$, and $\phi_2$ in $H_1^2(M)$, we get that
\begin{equation}\label{Pr1Eq10}
\left\|T_{\varepsilon,\delta_\varepsilon\(t\),\xi}\(\phi_1\)-T_{\varepsilon,\delta_\varepsilon\(t\),\xi}\(\phi_2\)\right\|_h\le C\cdot \left(\max\{\Vert \phi_1\Vert_h^{\theta_0},\Vert \phi_2\Vert_h^{\theta_0}\}\right)\cdot \Vert \phi_1-\phi_2\Vert_h\nonumber
\end{equation}
for some positive constant $C$ independent of $\varXi$, $\varepsilon$, $t$, $\xi$, $\phi$, $\phi_1$, and $\phi_2$, where $\nu_\varepsilon$ is as in \eqref{Pr1Eq5}. By Lemma~\ref{Lem4}, we have that $\left\|T_{\varepsilon,\delta_\varepsilon\(t\),\xi}\(0\)\right\|\leq C\nu_\eps$. We then get that for $\varXi>0$ large enough, and then for $\varepsilon$ small, for any real number $t$ in $\[a,b\]$, and any point $\xi$ in $M$, then the map $T_{\varepsilon,\delta_\varepsilon\(t\),\xi}$ is a contraction map from the closed ball $\overline{B}_{\varepsilon,\delta_\varepsilon\(t\),\xi}\(\varXi\)$ into itself . We then get that the map $T_{\varepsilon,\delta_\varepsilon\(t\),\xi}$ admits a unique fixed point $\phi_{\delta_\varepsilon\(t\),\xi}$ in the ball $\overline{B}_{\varepsilon,\delta_\varepsilon\(t\),\xi}\(\varXi\)$. In other words, the function $\phi_{\delta_\varepsilon\(t\),\xi}$ is the unique solution of equation \eqref{Pr1Eq2}, or equivalently \eqref{Eq17}, which satisfies \eqref{Pr1Eq1} with $C_{a,b}=\varXi$. 

\medskip\noindent The continuous differentiability of $(t,\xi)\mapsto \phi_{\delta_\varepsilon\(t\),\xi}$ on $(a,b)\times M$ is standard. This ends the proof of Proposition~\ref{Pr1}.\endproof

\section{The reduced problem}\label{sec:reduced}
For $\varepsilon$ small, we introduce the functional $J_\varepsilon$ defined on $H^2_1\(M\)$ by
\begin{equation}
J_\varepsilon\(u\):=\frac{1}{2}\int_M\left|\nabla u\right|^2_gdv_g+\frac{1}{2}\int_Mhu^2dv_g-\int_MF_\varepsilon\(u\)dv_g\,,\nonumber
\end{equation}
where $F_\varepsilon\(u\):=\int_0^uf_\varepsilon\(s\)ds$. The critical points of $J_\varepsilon$ are the solutions of equation \eqref{Eq8}. For any positive real number $t$ and any point $\xi$ in $M$, we define
\begin{equation}\label{Eq19}
\mathcal{J}_\varepsilon\(t,\xi\):=J_\varepsilon\(u_0-W_{\delta_\varepsilon\(t\),\xi}+\phi_{\delta_\varepsilon\(t\),\xi}\),
\end{equation}
where $W_{\delta_\varepsilon\(t\),\xi}$ is as in \eqref{Eq9} and $\phi_{\delta_\varepsilon\(t\),\xi}$ is given by Proposition~\ref{Pr1}. We solve equation \eqref{Eq16} in Proposition~\ref{Pr2} below: 

\begin{proposition}\label{Pr2}\samepage Let $u_0\in C^{2,\theta}(M)$ be a positive nondegenerate solution to \eqref{Th2Eq}. Assume that either $\{h\in C^{0,\theta}(M)\hbox{ and }3\le n\le 6\}$ or $\{h\in C^2(M)\hbox{ and }3\le n\le9\}$ or $\{(M,g)$ is locally conformally flat and $h\equiv c_n\Scal_g\}$. Then
\begin{multline}\label{Pr2Eq1}
\mathcal{J}_\varepsilon\(t,\xi\)=c_1(n,u_0)+c_2(n,u_0)\varepsilon+c_3(n)\varepsilon\ln\varepsilon+c_4(n)\varepsilon\ln \frac{1}{t}+c_5(n)\Big(\varepsilon t^{\frac{n-2}{2}}u_0\(\xi\)\\
+\frac{n^{\frac{n-2}{4}}\(n-2\)^{\frac{n-6}{4}}\(n-1\)\omega_{n}\varepsilon^{\frac{4}{n-2}}t^2}{2^{n-1}\(n-4\)\omega_{n-1}}
\cdot\(h\(\xi\)-c_n\Scal_g\(\xi\)\)\mathbf{1}_{n\ge6}\Big)\\
+\o\(\varepsilon\)
\end{multline}
as $\varepsilon\to0$, uniformly with respect to $t$ in compact subsets of $\mathbb{R}_{>0}$ and with respect to the point $\xi$ in $M$, $\omega_n$ (resp. $\omega_{n-1}$) is the volume of the unit $n$-sphere (resp. $\(n-1\)$-sphere), $c_i(n,u_0)$ ($i=1,2$) are positive constants depending only on $n$, $u_0$, and the manifold, $c_i(n)$ ($i=3,4,5$) depend only on $n$, and $c_4(n),c_5(n)>0$. Moreover, given two positive real numbers $a<b$, for $\varepsilon$ small, if $\(t_\varepsilon,\xi_\varepsilon\)\in(a,b)\times M$ is a critical point of $\mathcal{J}_\varepsilon$, then the function $u_0-W_{\delta_\varepsilon\(t_\varepsilon\),\xi_\varepsilon}+\phi_{\delta_\varepsilon\(t_\varepsilon\),\xi_\varepsilon}$ is a solution to equation \eqref{Eq8}, or equivalently \eqref{EqEPS}.
\end{proposition}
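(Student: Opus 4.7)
The proof splits naturally into two parts: the Taylor expansion \eqref{Pr2Eq1} of $\mathcal{J}_\varepsilon$ (the analytic core), and the Lyapunov--Schmidt identification that critical points of $\mathcal{J}_\varepsilon$ give genuine solutions of \eqref{Eq8}.

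For the expansion, I would first isolate the contribution of $\phi_\varepsilon := \phi_{\delta_\varepsilon(t),\xi}$. A second-order Taylor expansion of $J_\varepsilon$ at the approximate solution $u_0 - W_{\delta_\varepsilon(t),\xi}$ yields
$$\mathcal{J}_\varepsilon(t,\xi) = J_\varepsilon(u_0 - W_{\delta_\varepsilon(t),\xi}) + \langle J_\varepsilon'(u_0 - W_{\delta_\varepsilon(t),\xi}), \phi_\varepsilon\rangle_h + O(\|\phi_\varepsilon\|_h^2).$$
Because $\phi_\varepsilon \in K^\perp_{\delta_\varepsilon(t),\xi}$, the first-order term equals $-\langle R_{\varepsilon,\delta_\varepsilon(t),\xi}, \phi_\varepsilon\rangle_h$ with $R$ as in \eqref{Pr1Eq4}, and Proposition~\ref{Pr1} combined with Lemma~\ref{Lem4} bound the whole correction by $C\nu_\varepsilon^2$. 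A case-by-case check shows $\nu_\varepsilon^2 = o(\varepsilon)$ in each of the three regimes ($\varepsilon^2|\log\varepsilon|^2$ for $n \le 6$; $\varepsilon^{8/(n-2)}$ for $7 \le n \le 9$; $\varepsilon^{(n+2)/(n-2)}$ in the locally conformally flat case), so the $\phi_\varepsilon$ contribution is absorbed in the $o(\varepsilon)$ remainder.

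The bulk of the work is the expansion of $J_\varepsilon(u_0 - W_{\delta,\xi})$. Using $\Delta_g u_0 + h u_0 = u_0^{\crit-1}$ and integration by parts, the quadratic piece becomes $\tfrac{1}{2}\|u_0\|_h^2 + \tfrac{1}{2}\|W_{\delta,\xi}\|_h^2 - \int_M u_0^{\crit-1} W_{\delta,\xi}\,dv_g$. For the nonlinear piece I would split $M = B_\xi(r_0) \sqcup (M\setminus B_\xi(r_0))$: on the complement $W_{\delta,\xi} \equiv 0$, giving a smooth contribution to $c_1(n,u_0) + c_2(n,u_0)\varepsilon$; inside $B_\xi(r_0)$ I rescale via $x = \exp_\xi(\delta y)$ in $g_\xi$-normal coordinates and pointwise expand $F_\varepsilon(u_0 - W)$ in the relative sizes of the bounded $u_0$ and the concentrated $W$, producing integrals of $U^{\crit-\varepsilon}$, $U^{\crit-1}$ and $U^2$ weighted by Taylor data of $u_0$ and of the metric. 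The logarithmic terms come from
$$\delta_\varepsilon(t)^{-\varepsilon(n-2)/2} = 1 - \varepsilon \log \varepsilon - \tfrac{n-2}{2}\varepsilon \log t + o(\varepsilon)$$
multiplying $\int_{\mathbb{R}^n} U^\crit \,dx$; the $c_5(n)\varepsilon t^{(n-2)/2} u_0(\xi)$ term comes from the cross integral $\int u_0^{\crit-1} W_{\delta,\xi}\,dv_g \sim u_0(\xi)\delta^{(n-2)/2}\int_{\mathbb{R}^n} U^{\crit-1}\,dy$; and the $(h - c_n\Scal_g)(\xi)\,\varepsilon^{4/(n-2)}t^2$ term in dimensions $n \ge 6$ arises from $\int h W_{\delta,\xi}^2\,dv_g$ combined with the scalar-curvature correction to $\int |\nabla W_{\delta,\xi}|^2_g$ (the combination being forced by the conformal-Laplacian structure of $W_{\delta,\xi}$ relative to $g_\xi$). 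The main obstacle I anticipate is the explicit bookkeeping of the constants $c_4(n), c_5(n)$ and of the $n \ge 6$ coefficient, which requires beta-function evaluations and careful tracking of curvature remainders across dimensions; the locally conformally flat hypothesis removes these curvature remainders entirely in $B_\xi(r_0)$ after the conformal change $g \mapsto g_\xi$.

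For the reduction statement, since $\phi_\varepsilon$ solves \eqref{Eq17} the residual $r_\varepsilon := (u_0 - W + \phi_\varepsilon) - i^*(f_\varepsilon(u_0 - W + \phi_\varepsilon))$ lies in $K_{\delta_\varepsilon(t),\xi}$, so $r_\varepsilon = \sum_{i=0}^n \lambda_{i,\varepsilon} Z_{i,\varepsilon}$ in the notation of \eqref{Lem1Eq4}. Differentiating \eqref{Eq19} yields
$$\partial_t \mathcal{J}_\varepsilon(t,\xi) = \langle r_\varepsilon, \partial_t(u_0 - W + \phi_\varepsilon)\rangle_h, \qquad \partial_{\xi_j}\mathcal{J}_\varepsilon(t,\xi) = \langle r_\varepsilon, \partial_{\xi_j}(u_0 - W + \phi_\varepsilon)\rangle_h.$$
After rescaling, $\partial_t W$ and $\partial_{\xi_j} W$ provide the leading contributions to $Z_{0,\varepsilon}$ and $Z_{j,\varepsilon}$, while differentiating the fixed-point equation for $\phi_\varepsilon$ controls $\|\partial_t \phi_\varepsilon\|_h$ and $\|\partial_{\xi_j}\phi_\varepsilon\|_h$. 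The approximate orthogonality \eqref{Lem1Eq6} then makes the $(n+1)\times(n+1)$ system relating the $\lambda_{i,\varepsilon}$ to $(\partial_t \mathcal{J}_\varepsilon, \partial_{\xi_j} \mathcal{J}_\varepsilon)$ invertible for small $\varepsilon$, so $\partial_t \mathcal{J}_\varepsilon = \partial_{\xi_j}\mathcal{J}_\varepsilon = 0$ forces all $\lambda_{i,\varepsilon} = 0$ and $r_\varepsilon = 0$, i.e., $u_0 - W + \phi_\varepsilon$ solves \eqref{Eq8}.
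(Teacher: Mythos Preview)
Your outline matches the paper's proof: the split into (i) absorbing $\phi_\varepsilon$ into $o(\varepsilon)$ via Proposition~\ref{Pr1} and Lemma~\ref{Lem4}, (ii) expanding $J_\varepsilon(u_0-W_{\delta,\xi})$ directly, and (iii) the standard reduction argument, is exactly the structure of Lemmas~\ref{Lem2}--\ref{Lem3} and the end of the proof of Proposition~\ref{Pr2}.

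Two corrections worth flagging. First, the cross integral $\int_M u_0^{\crit-1} W_{\delta,\xi}\,dv_g$ from the quadratic part is \emph{not} localized at $\xi$ and does \emph{not} produce the $u_0(\xi)\,\varepsilon t^{(n-2)/2}$ term; it cancels (up to $O(\varepsilon^2)$) against $\int_M f_\varepsilon(u_0)W_{\delta,\xi}\,dv_g$ coming from the expansion of the nonlinearity. The surviving contribution is the \emph{other} cross term $\int_M u_0\,W_{\delta,\xi}^{\crit-1-\varepsilon}\,dv_g$ (the paper's $I_{2,\varepsilon,t,\xi}$), whose asymptotic is precisely the one you wrote down---you just attached it to the wrong integral. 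Second, in the reduction step you plan to bound $\|\partial_t\phi_\varepsilon\|_h$ and $\|\partial_{\xi_j}\phi_\varepsilon\|_h$ by differentiating the fixed-point equation; the paper bypasses this entirely by differentiating the orthogonality relations $\langle Z_{i,\delta,\xi},\phi_{\delta,\xi}\rangle_h=0$, which gives $\langle Z_i,\partial\phi\rangle_h=-\langle\partial Z_i,\phi\rangle_h=o(1)$ directly from $\|\partial Z_i\|_h=O(1)$ (resp.\ $O(\delta^{-1})$ in the $\xi$-direction, absorbed by the $\delta$-rescaling) and $\|\phi\|_h=o(1)$. Your route is valid but longer.
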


\noindent This section is devoted to the proof of Proposition~\ref{Pr2}. We define the optimal Sobolev constant $K_n$ by
\begin{equation}\label{Eq21}
\frac{1}{K_n}:=\inf_{u\in D_1^2(\rn)\setminus\{0\}}\frac{\Vert \nabla u\Vert_2}{\Vert u\Vert_{\crit}}=\sqrt{\frac{n\(n-2\)\omega_n^{2/n}}{4}}\,,
\end{equation}
where $\omega_n$ is the volume of the unit $n$-sphere: see Aubin~\cite{Aub1}, Talenti~\cite{Tal}, Rodemich~\cite{rodemich}. The infimum in \eqref{Eq21} is achieved by the function $U$ defined  in \eqref{Eq10}.

\begin{lemma}\label{Lem2} Let $u_0\in C^2(M)$ be a positive solution to \eqref{Eq9}. 
Assume that either $\{h\in C^{0,\theta}(M)\hbox{ and }3\le n\le6\}$ or $\{h\in C^2(M)\hbox{ and }3\le n\le9\}$ or $\{(M,g)$ is locally conformally flat and $h\equiv c_n \Scal_g\}$. Then
\begin{multline}\label{Lem2Eq1}
J_\varepsilon\(u_0-W_{\delta_\varepsilon\(t\),\xi}\)=\frac{1}{n}\int_Mu_0^{2^*}dv_g+\frac{\varepsilon}{2^*}\int_Mu_0^{2^*}\(\ln u_0-\frac{1}{2^*}\)dv_g\\
+\frac{K_n^{-n}}{n}\bigg(1-\beta_n\varepsilon-\frac{n-2}{2}\varepsilon\ln\varepsilon-\frac{\(n-2\)^2}{4}\varepsilon\ln t+\frac{2^n\omega_{n-1}\varepsilon t^{\frac{n-2}{2}}u_0\(\xi\)}{\(n\(n-2\)\)^{\frac{n-2}{4}}\omega_n}\\
+\frac{2\(n-1\)\varepsilon^{\frac{4}{n-2}}t^2}{\(n-2\)\(n-4\)}\(h\(\xi\)-c_n\Scal_g\(\xi\)\)\mathbf{1}_{n\ge6}+\o\(\varepsilon\)\bigg)
\end{multline}
as $\varepsilon\to0$, uniformly with respect to $t$ in compact subsets of $\mathbb{R}_{>0}$ and with respect to the point $\xi$ in $M$, where $\omega_n$ (resp. $\omega_{n-1}$) is the volume of the unit $n$-sphere (resp. $\(n-1\)$-sphere), $K_n$ is as in \eqref{Eq21}, and
\begin{equation}\label{Lem2Eq2}
\beta_n=2^{n-3}\(n-2\)^2\frac{\omega_{n-1}}{\omega_n}\int_0^{+\infty}\frac{r^{\frac{n-2}{2}}\ln\(1+r\)}{\(1+r\)^n}dr+\frac{\(n-2\)^2}{4n}\(1-n\ln\sqrt{n\(n-2\)}\).
\end{equation}
\end{lemma}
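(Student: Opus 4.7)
The starting point is the decomposition
\[
J_\eps(u_0-W) = \tfrac12\|u_0\|_h^2 - \langle u_0,W\rangle_h + \tfrac12\|W\|_h^2 - \int_M F_\eps(u_0-W)\,dv_g,
\]
where $W:=W_{\delta_\eps(t),\xi}$ and $\delta:=\delta_\eps(t)=t\eps^{2/(n-2)}$. Using the Euler--Lagrange equation $\Delta_g u_0 + hu_0 = u_0^{\crit-1}$, the first two terms simplify to $\tfrac12\int_M u_0^{\crit}\,dv_g - \int_M u_0^{\crit-1}W\,dv_g$. Each remaining piece will be expanded via the rescaling $x=\exp_\xi(\delta y)$ and an expansion of $g_\xi$ in $g_\xi$-normal coordinates (which is trivial in the l.c.f. case since $g_\xi$ is flat on $B_\xi(r_0)$).

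For $\tfrac12\|W\|_h^2$, I would combine the metric expansion $\sqrt{\det g_\xi}(\delta y)\,g_\xi^{ij}(\delta y)$ with the Dirichlet part to get the Euclidean bubble energy $\tfrac12 K_n^{-n}=\tfrac12\|\nabla U\|_2^2$ at leading order, plus a second-order correction proportional to $\delta^2\int_{\rn}U^2$ involving the scalar curvature. Adding $\int_M hW^2\,dv_g$ produces a net contribution of $\delta^2(h(\xi)-c_n\Scal_g(\xi))\alpha_n\mathbf{1}_{n\ge 5}$ with $\alpha_n$ explicit; for $n\ge 6$ this matches the $\eps^{4/(n-2)}t^2$ term in the statement, while for $n=3,4,5$ the full contribution is $o(\eps)$ and can be absorbed in the remainder.

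For the nonlinear part, split $M=B_\xi(r_0)\cup(M\setminus B_\xi(r_0))$: outside the ball, $W\equiv 0$, so the integrand is $F_\eps(u_0)=|u_0|^{\crit-\eps}/(\crit-\eps)$, and expanding $|u_0|^{\crit-\eps}=u_0^{\crit}(1-\eps\ln u_0+O(\eps^2))$ globally gives, after combining with $\tfrac12\int u_0^{\crit}$ from Step 1, the $\tfrac1n\int u_0^{\crit}$ and $\tfrac{\eps}{\crit}\int u_0^{\crit}(\ln u_0-1/\crit)\,dv_g$ terms of \eqref{Lem2Eq1}. Inside $B_\xi(r_0)$ I would use the pointwise Taylor expansion
\[
|u_0-W|^{\crit-\eps}=W^{\crit-\eps}-(\crit-\eps)u_0W^{\crit-1-\eps}+O(u_0^2 W^{\crit-2-\eps})
\]
in the zone where $W\gtrsim u_0$ and the analogous expansion in $W/u_0$ on the complementary zone. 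Rescaling $\int W^{\crit-\eps}/(\crit-\eps)\,dv_g$ produces $\delta^{-\eps(n-2)/2}\int_{\rn}U^{\crit-\eps}(1+O(\delta^2|y|^2))\,dy/(\crit-\eps)$; expanding $\delta^{-\eps(n-2)/2}=1-\tfrac{n-2}{2}\eps\ln\eps-\tfrac{(n-2)^2}{4}\eps\ln t+\tfrac{(n-2)^2}{4}\eps\ln\sqrt{n(n-2)}+O(\eps^2)$ and $U^{-\eps}=1-\eps\ln U+O(\eps^2)$, and evaluating the log-integrals, yields the $K_n^{-n}/n$ prefactor together with the $\eps\ln\eps$, $\eps\ln t$, and $\beta_n$ constants (the $\ln(1+r)$ integral in \eqref{Lem2Eq2} is exactly what arises from $\int U^{\crit}\ln U$ in polar coordinates). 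The cross term $-\int u_0 W^{\crit-1-\eps}$ rescales to $-u_0(\xi)\delta^{(n-2)/2}(1+o(1))\int_{\rn}U^{\crit-1}\,dy$; since $\delta^{(n-2)/2}=t^{(n-2)/2}\eps$, a direct polar evaluation of $\int_{\rn}U^{\crit-1}$ produces the coefficient $2^n\omega_{n-1}/((n(n-2))^{(n-2)/4}\omega_n)$.

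The main obstacle is bookkeeping: every $O(\eps)$ contribution must be tracked across (i) the expansion of $\delta^{-\eps(n-2)/2}$, (ii) the expansion of $U^{-\eps}$, (iii) the pointwise Taylor expansion of $|u_0-W|^{\crit-\eps}$ together with the zone splitting, (iv) the metric expansion of $g_\xi$, and (v) the global expansion of $|u_0|^{\crit-\eps}$. The quadratic Taylor remainder $\int u_0^2 W^{\crit-2-\eps}\,dv_g$ is $O(\delta^2)$ for $n\le 5$, $O(\delta^2|\ln\delta|)$ for $n=6$, and $O(\delta^{n-2})$ for $n\ge 7$, each of which is $o(\eps)$ in the respective ranges; similarly, the geometric remainder in $\|W\|_h^2$ is $o(\eps)$ exactly in the three regimes listed in the hypotheses (low dimension, $h\equiv c_n\Scal_g$ up to $n=9$ where conformal covariance kills the borderline term, or l.c.f. where the Weyl tensor vanishes), which is why these three cases must be handled in parallel.
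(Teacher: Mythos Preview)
Your plan follows essentially the same route as the paper: decompose $J_\eps(u_0-W)$, use the equation for $u_0$ to eliminate the mixed linear terms, expand the bubble energy $\tfrac12\|W\|_h^2$ via the Cartan (or flat, in the l.c.f.\ case) expansion of the metric, expand $\int F_\eps$ with a zone splitting, and track all $O(\eps)$ contributions. The paper organizes the nonlinear part slightly differently, writing
\[
J_\eps(u_0-W)=\Big[\tfrac12\!\int u_0^{2^*}-\!\int F_\eps(u_0)\Big]+I_1+I_2-I_3,
\]
with $I_1$ the pure-bubble energy, $I_2=\int u_0 W^{2^*-1-\eps}$ over all of $M$, and $I_3$ the \emph{symmetric} interaction remainder $\int\big(F_\eps(u_0-W)-F_\eps(u_0)-F_\eps(W)+f_\eps(u_0)W+f_\eps(W)u_0\big)$. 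This isolates the single cross term $I_2$ globally and pushes all the zone-splitting into the estimate of $I_3$, which is a little cleaner than matching inner- and outer-zone cross terms by hand.

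There are, however, two concrete errors in your plan that would cause trouble if carried out as written.

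\smallskip
\noindent\textbf{(i) The quadratic Taylor remainder.} Your bounds on $\int u_0^2 W^{2^*-2-\eps}$ are wrong. Over the inner zone $\{W\gtrsim u_0\}\sim\{r\lesssim\sqrt{\delta}\}$ one has, for $n\ge5$,
\[
\int_0^{\sqrt{\delta}}\frac{\delta^2}{(\delta^2+r^2)^2}\,r^{n-1}\,dr
\;=\;O(\delta^{n/2})\;=\;O\big(\eps^{n/(n-2)}\big),
\]
which is $o(\eps)$ for every $n\ge3$. Your stated bound $O(\delta^2|\ln\delta|)$ for $n=6$ equals $O(\eps|\ln\eps|)$, which is \emph{not} $o(\eps)$; and your $O(\delta^{n-2})$ for $n\ge7$ is too optimistic (the correct power is $n/2$). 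You also omit the companion remainder $\int_{\{W\lesssim u_0\}} W^2 u_0^{2^*-2-\eps}$ from the outer-zone expansion, which must be estimated separately; it is also $O(\delta^{n/2})$ for $n\ge5$. The paper's $I_3$ packages both remainders at once and gives the clean bound $O(\eps^{n/(n-2)})$ for $n\ge5$ (and $O(\eps^2)$, $O(\eps^2|\ln\eps|)$ for $n=3,4$).

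\smallskip
\noindent\textbf{(ii) The three regimes.} The middle hypothesis is $h\in C^2(M)$ with $3\le n\le9$, \emph{not} $h\equiv c_n\Scal_g$; and the mechanism is not conformal covariance. After extracting the $\delta^2(h(\xi)-c_n\Scal_g(\xi))$ term from $\tfrac12\|W\|_h^2-\tfrac{1}{2^*-\eps}\int W^{2^*-\eps}$, the next geometric correction (fourth-order Taylor of $g$) is $O(\delta^4)=O(\eps^{8/(n-2)})$, which is $o(\eps)$ exactly when $n<10$. The extra regularity $h\in C^2$ is needed so that the Taylor remainder of $\int hW^2$ is also $O(\delta^4)$ rather than $O(\delta^{2+\theta})$; for $n\le6$ the weaker $h\in C^{0,\theta}$ suffices because $\delta^{2+\theta}=\eps^{2(2+\theta)/(n-2)}=o(\eps)$ there.
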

\proof
All our estimates in this proof are uniform with respect to $t$ in compact subsets of $\mathbb{R}_{>0}$, with respect to the point $\xi$ in $M$, and with respect to $\varepsilon$ in $\(0,\varepsilon_0\)$ for some fixed positive real number $\varepsilon_0$. Expanding $J_\varepsilon\(u_0-W_{\delta_\varepsilon\(t\),\xi}\)$, using that $u_0$ is a solution to \eqref{Th2Eq} and rough estimates yield
\begin{multline}\label{Lem2Eq3BIS}
J_\varepsilon\(u_0-W_{\delta_\varepsilon\(t\),\xi}\)=\frac{1}{n}\int_Mu_0^{2^*}dv_g+\frac{\varepsilon}{2^*}\int_Mu_0^{2^*}\(\ln
u_0-\frac{1}{2^*}\)dv_g\\
+I_{1,\eps,t,\xi}+I_{2,\eps,t,\xi}-I_{3,\eps,t,\xi}+\O\(\varepsilon^2\)
\end{multline}
when $\eps\to 0$ where 
$$I_{1,\eps,t,\xi}:=\frac{1}{2}\int_M\left|\nabla W_{\delta_\varepsilon\(t\),\xi}\right|_g^2dv_g+\frac{1}{2}\int_MhW_{\delta_\varepsilon\(t\),\xi}^2dv_g-\frac{1}{2^*-\varepsilon}\int_MW_{\delta_\varepsilon\(t\),\xi}^{2^*-\varepsilon}dv_g\,,$$
$$I_{2,\eps,t,\xi}:=\int_Mu_0W_{\delta_\varepsilon\(t\),\xi}^{2^*-1-\varepsilon}dv_g\,,$$
\begin{multline*}
I_{3,\eps,t,\xi}:=\int_M\big(F_\varepsilon\(u_0-W_{\delta_\varepsilon\(t\),\xi}\)-F_\varepsilon\(u_0\)-F_\varepsilon\(W_{\delta_\varepsilon\(t\),\xi}\)\\
+f_\varepsilon\(u_0\)W_{\delta_\varepsilon\(t\),\xi}+f_\varepsilon\(W_{\delta_\varepsilon\(t\),\xi}\)u_0\big)dv_g\,.
\end{multline*}
We estimate these terms separately.

\medskip\noindent{\it Step 1: Estimate of $I_{1,\eps,t,\xi}$ in the locally conformally case when $h\equiv c_n\Scal_g$.}\\
\noindent In case $h\equiv c_n\Scal_g$ and the manifold is locally conformally flat, the conformal change of metric $g_\xi=\varLambda_\xi^{4/\(n-2\)}g$ yields
\begin{multline*}
\frac{1}{2}\int_M\left|\nabla W_{\delta_\varepsilon\(t\),\xi}\right|_g^2dv_g+\frac{1}{2}\int_MhW_{\delta_\varepsilon\(t\),\xi}^2dv_g-\frac{1}{2^*-\varepsilon}\int_MW_{\delta_\varepsilon\(t\),\xi}^{2^*-\varepsilon}dv_g\\
=\frac{1}{2}\int_M\big|\nabla\widetilde{W}_{\delta_\varepsilon\(t\),\xi}\big|_{g_\xi}^2dv_{g_\xi}-\frac{1}{2^*-\varepsilon}\int_M\varLambda_\xi^{-\varepsilon}\widetilde{W}_{\delta_\varepsilon\(t\),\xi}^{2^*-\varepsilon}dv_{g_\xi}\,, 
\end{multline*}
where $\widetilde{W}_{\delta_\varepsilon\(t\),\xi}=W_{\delta_\varepsilon\(t\),\xi}/\varLambda_\xi$. In this case, since the metric $g_\xi$ is flat in $B_\xi\(r_0\)$, we find
\begin{align*}
\int_M\big|\nabla\widetilde{W}_{\delta_\varepsilon\(t\),\xi}\big|_{g_\xi}^2dv_{g_\xi}&=\int_{\rn}|\nabla U|^2\, dx+\O\(\delta_\varepsilon\(t\)^{n-2}\)=K_n^{-n}+\O\(\varepsilon^2\) 
\end{align*}
when $\eps\to 0$. Moreover, since $g_\xi$ is flat around $\xi$, we get that
\begin{align*}
&\frac{1}{2^*-\varepsilon}\int_M\varLambda_\xi^{-\varepsilon}\widetilde{W}_{\delta_\varepsilon\(t\),\xi}^{2^*-\varepsilon}dv_{g_\xi}\nonumber\\
&\quad=\frac{\(n\(n-2\)\)^{\frac{n-2}{4}\(2^*-\varepsilon\)}}{2^*-\varepsilon}\omega_{n-1}\delta_\varepsilon\(t\)^{\frac{n-2}{2}\varepsilon}\int_0^{\frac{r_0}{2\delta_\varepsilon\(t\)}}\frac{r^{n-1}dr}{\(1+r^2\)^{\frac{n-2}{2}\(2^*-\varepsilon\)}}+\O\(\delta_\varepsilon\(t\)^n\)\nonumber\\
&\quad+\O\(\eps \delta_\varepsilon\(t\)\)\nonumber\\
&\quad=\frac{n-2}{2n}K_n^{-n}\bigg(1+\frac{2\beta_n}{n-2}\varepsilon+\varepsilon\ln\varepsilon+\frac{n-2}{2}\varepsilon\ln t\bigg)+\O\(\varepsilon^2\left|\ln\varepsilon\right|^2\)+\O\(\eps \delta_\varepsilon\(t\)\), 
\end{align*}
where $K_n$ is as in \eqref{Eq21}, and $\beta_n$ is as in \eqref{Lem2Eq2}. Therefore, we get that
\begin{equation}\label{I:1:eps:lcf}
I_{1,\eps,t,\xi}=\frac{K_n^{-n}}{n}\bigg(1-\beta_n\varepsilon-\frac{n-2}{2}\varepsilon\ln\varepsilon-\frac{\(n-2\)^2}{4}\varepsilon\ln t\bigg)+o(\eps)\\
\end{equation}
when $\eps\to 0$ uniformly for all $\xi\in M$ and $t$ in a compact of $(0,+\infty)$ when $h\equiv c_n\Scal_g$ and $(M,g)$ is locally conformally flat.

\medskip\noindent{\it Step 2: Estimate of $I_{1,\eps,t,\xi}$ in the general case.}\\
\noindent Cartan's expansion of the metric in geodesic normal coordinates yields for any $\alpha,\beta=1,\dotsc,n$ and for $y$ close to $0$, there holds
\begin{equation}\label{Lem2Eq12}
\sqrt{\left|g\(\exp_\xi y\)\right|}=1-\frac{1}{6}\sum_{\mu,\nu=1}^nR_{\mu\nu}\(\xi\)y^\mu y^\nu+P_3(y)+\O\(\left|y\right|^4\),
\end{equation}
where the function $\left|g\right|$ is the determinant of the metric, the functions $R_{\mu\nu}$ are the components of the Ricci curvature tensor in geodesic normal coordinates associated with the map $\exp_\xi$ and $P_3(y)$ is a homogenous polynomial of degree three. Using \eqref{Lem2Eq12} together with expression of the gradient of a radially symmetrical function in geodesic normal chart, we get that
\begin{eqnarray}
&&\int_M\left|\nabla
W_{\delta_\varepsilon\(t\),\xi}\right|_g^2dv_g=n^{\frac{n-2}{2}}\(n-2\)^{\frac{n+2}{2}}\omega_{n-1}\label{Lem2Eq13}\\
&&\quad\times\int_0^{\frac{r_0}{2\delta_\varepsilon\(t\)}}\frac{r^{n+1}}{\(1+r^2\)^n}\(1-\frac{1}{6n}\Scal_g\(\xi\)\delta_\varepsilon\(t\)^2r^2+\O\(\delta_\varepsilon\(t\)^4r^4\)\)dr\nonumber\\
&&\quad+\O\(\delta_\varepsilon\(t\)^{n-2}\)\nonumber\\
&&=\left\{\begin{aligned}
&K_3^{-3}+\O\(\varepsilon^2\)&&\text{if }n=3\\
&K_4^{-4}\(1+\frac{1}{4}\Scal_g\(\xi\)t^2\varepsilon^2\ln\varepsilon\)+\O\(\varepsilon^2\)&&\text{if
}n=4\\
&K_n^{-n}\(1-\frac{n+2}{6n\(n-4\)}\Scal_g\(\xi\)t^2\varepsilon^{\frac{4}{n-2}}\)+\O\(\varepsilon^{\frac{8}{n-2}}+\varepsilon^2\left|\ln\varepsilon\right|\)&&\text{if
}n\ge5
\end{aligned}\right.\nonumber
\end{eqnarray}
when $\eps\to 0$. Taylor's expansion at $\xi$ yields on the one hand
\begin{eqnarray}
&&\frac{1}{2^*-\varepsilon}\int_MW_{\delta_\varepsilon\(t\),\xi}^{2^*-\varepsilon}dv_g=\frac{\(n\(n-2\)\)^{\frac{n-2}{4}\(2^*-\varepsilon\)}}{2^*-\varepsilon}\omega_{n-1}\delta_\varepsilon\(t\)^{\frac{n-2}{2}\varepsilon}\label{Lem2Eq15}\\
&&\quad\times\int_0^{\frac{r_0}{2\delta_\varepsilon\(t\)}}\frac{r^{n-1}}{\(1+r^2\)^{\frac{n-2}{2}\(2^*-\varepsilon\)}}\bigg(1-\frac{1}{6n}\Scal_g\(\xi\)\delta_\varepsilon\(t\)^2r^2\nonumber\\
&&\quad+\O\(\delta_\varepsilon\(t\)^4r^4\)\bigg)dr+\O\(\delta_\varepsilon\(t\)^n\)\nonumber\\
&&=\frac{n-2}{2n}K_n^{-n}\bigg(1+\frac{2\beta_n}{n-2}\varepsilon+\varepsilon\ln\varepsilon+\frac{n-2}{2}\varepsilon\ln
t-\frac{1}{6\(n-2\)}\Scal_g\(\xi\)t^2\varepsilon^{\frac{4}{n-2}}\bigg)\nonumber\\
&&\quad+\O\(\varepsilon^{\frac{8}{n-2}}+\varepsilon^2\left|\ln\varepsilon\right|^2\)\nonumber
\end{eqnarray}
when $\eps\to 0$. On the other hand,
\begin{align}
&\int_MhW_{\delta_\varepsilon\(t\),\xi}^2dv_g=n^{\frac{n-2}{2}}\(n-2\)^{\frac{n-2}{2}}\delta_\varepsilon\(t\)^2\label{Lem2Eq14}\\
&\qquad\times\int_{B_{\frac{r_0}{2\delta_\varepsilon\(t\)}}(0)}\frac{h(\hbox{exp}_\xi(\delta_\eps(t)x))}{\(1+|x|^2\)^{n-2}}(1+\O(\delta_\varepsilon\(t\)^2|x|^2))\,dx+\O\(\delta_\varepsilon\(t\)^{n-2}\)\nonumber\allowdisplaybreaks\\
&\quad=\left\{\begin{aligned}
&\O\(\varepsilon^2\)&&\text{if }n=3\\
&-\frac{3}{2}K_4^{-4}h\(\xi\)t^2\varepsilon^2\ln\varepsilon+\O\(\varepsilon^2\)&&\text{if }n=4\\
&\frac{4\(n-1\)}{n\(n-2\)\(n-4\)}K_n^{-n}h\(\xi\)t^2\varepsilon^{\frac{4}{n-2}}+\O\(R_\eps\)&&\text{if }n\ge5
\end{aligned}\right.\nonumber
\end{align}
when $\eps\to 0$, where
$$R_\eps:=\left\{\begin{array}{ll}
\varepsilon^{\frac{8}{n-2}}+\varepsilon^2\left|\ln\varepsilon\right|&\hbox{ if }h\in C^2(M)\\
\varepsilon^{\frac{2(2+\theta)}{n-2}}&\hbox{ if }h\in C^{0,\theta}(M).
\end{array}\right.$$
Plugging together \eqref{Lem2Eq13}, \eqref{Lem2Eq14}, and \eqref{Lem2Eq15} yields
\begin{multline}\label{est:I:2}
I_{1,\eps,t,\xi}=\frac{K_n^{-n}}{n}\bigg(1-\beta_n\varepsilon-\frac{n-2}{2}\varepsilon\ln\varepsilon-\frac{\(n-2\)^2}{4}\varepsilon\ln t \\
+\frac{2\(n-1\)\varepsilon^{\frac{4}{n-2}}t^2}{\(n-2\)\(n-4\)}\(h\(\xi\)-c_n\Scal_g\(\xi\)\)\mathbf{1}_{n\ge6}\bigg)+\o\(\varepsilon\)+\O\(R_\eps\)
\end{multline}
when $\eps\to 0$.

\medskip\noindent{\it Step 3: Estimate of $I_{2,\eps,t,\xi}$.}
\noindent\begin{align}
&\int_Mu_0W_{\delta_\varepsilon\(t\),\xi}^{2^*-1-\varepsilon}dv_g=\(n\(n-2\)\)^{\frac{n-2}{4}\(2^*-1-\varepsilon\)}\omega_{n-1}u_0\(\xi\)\delta_\varepsilon\(t\)^{\frac{n-2}{2}\(1+\varepsilon\)}\label{Lem2Eq16}
\\
&\qquad\times\int_0^{\frac{r_0}{2\delta_\varepsilon\(t\)}}\frac{r^{n-1}}{\(1+r^2\)^{\frac{n-2}{2}\(2^*-1-\varepsilon\)}}\bigg(1+\O\(\delta_\varepsilon\(t\)^2r^2\)\bigg)dr+\O\(\delta_\varepsilon\(t\)^{\frac{n+2}{2}}\)\nonumber\\
&\quad=\frac{2^n\omega_{n-1}K_n^{-n}u_0\(\xi\)t^{\frac{n-2}{2}}\varepsilon}{n^{\frac{n+2}{4}}\(n-2\)^{\frac{n-2}{4}}\omega_n}+\O\(\varepsilon^{\frac{n+2}{n-2}}\left|\ln\varepsilon\right|+\varepsilon^2\left|\ln\varepsilon\right|\)\nonumber
\end{align}
when $\eps\to 0$, where $K_n$ is as in \eqref{Eq21}, and $\beta_n$ is as in \eqref{Lem2Eq2}. We have used here that $\int_{\rn}U^{\crit-1}\, dx=\lim_{R\to +\infty}\int_{B_R(0)}\Delta_{\Eucl}U\, dx$ and integrated by parts.

\medskip\noindent{\it Step 4: Estimate of $I_{3,\eps,t,\xi}$.}\\
\noindent We have that
\begin{eqnarray}\label{Lem2Eq17}
&&\bigg|\int_M\left(F_\varepsilon\(u_0-W_{\delta_\varepsilon\(t\),\xi}\)-F_\varepsilon\(u_0\)-F_\varepsilon\(W_{\delta_\varepsilon\(t\),\xi}\)\right.\\
&&\quad\left.
+f_\varepsilon\(u_0\)W_{\delta_\varepsilon\(t\),\xi}+f_\varepsilon\(W_{\delta_\varepsilon\(t\),\xi}\)u_0\)dv_g\bigg|\nonumber\\
&&\le\int_{B_\xi(\sqrt{\delta_\varepsilon\(t\)})}\left|F_\varepsilon\(u_0-W_{\delta_\varepsilon\(t\),\xi}\)-F_\varepsilon\(W_{\delta_\varepsilon\(t\),\xi}\)
+f_\varepsilon\(W_{\delta_\varepsilon\(t\),\xi}\)u_0\right|dv_g\nonumber\\
&&\quad+\int_{M\backslash
B_\xi(\sqrt{\delta_\varepsilon\(t\)})}\left|F_\varepsilon\(u_0-W_{\delta_\varepsilon\(t\),\xi}\)-F_\varepsilon\(u_0\)
+f_\varepsilon\(u_0\)W_{\delta_\varepsilon\(t\),\xi}\right|dv_g\nonumber\\
&&\quad+\int_{B_\xi(\sqrt{\delta_\varepsilon\(t\)})}\left|F_\varepsilon\(u_0\)\right|
dv_g+\int_{B_\xi(\sqrt{\delta_\varepsilon\(t\)})}\left|f_\varepsilon\(u_0\)W_{\delta_\varepsilon\(t\),\xi}\right|dv_g\nonumber\\
&&\quad+\int_{M\backslash
B_\xi(\sqrt{\delta_\varepsilon\(t\)})}\left|F_\varepsilon\(W_{\delta_\varepsilon\(t\),\xi}\)\right|dv_g+\int_{M\backslash
B_\xi(\sqrt{\delta_\varepsilon\(t\)})}\left|f_\varepsilon\(W_{\delta_\varepsilon\(t\),\xi}\)u_0\right|dv_g\,.\nonumber
\end{eqnarray}
As is easily checked, Taylor expansions of $F(u_0-W_{\delta_\varepsilon\(t\),\xi})$ yield
\begin{align}
&\int_{B_\xi(\sqrt{\delta_\varepsilon\(t\)})}\left|F_\varepsilon\(u_0-W_{\delta_\varepsilon\(t\),\xi}\)-F_\varepsilon\(W_{\delta_\varepsilon\(t\),\xi}\) +f_\varepsilon\(W_{\delta_\varepsilon\(t\),\xi}\)u_0\right|dv_g\nonumber\\
&\qquad=\O\(\int_{B_\xi(\sqrt{\delta_\varepsilon\(t\)})}u_0^2W_{\delta_\varepsilon\(t\),\xi}^{2^*-2-\varepsilon}dv_g\),\label{Lem2Eq18}\\
&\int_{M\backslash B_\xi(\sqrt{\delta_\varepsilon\(t\)})}\left|F_\varepsilon\(u_0-W_{\delta_\varepsilon\(t\),\xi}\)-F_\varepsilon\(u_0\) +f_\varepsilon\(u_0\)W_{\delta_\varepsilon\(t\),\xi}\right|dv_g\nonumber\\
&\qquad=\O\(\int_{M\backslash B_\xi(\sqrt{\delta_\varepsilon\(t\)})}u_0^{2^*-2-\varepsilon}W_{\delta_\varepsilon\(t\),\xi}^2dv_g\).\label{Lem2Eq19}
\end{align}
Bounding $u_0$ and $W_{\delta_\varepsilon\(t\),\xi}$ pointwisely roughly from above in \eqref{Lem2Eq18} and \eqref{Lem2Eq19} and plugging this in \eqref{Lem2Eq17} yields 
\begin{equation}\label{Lem2Eq26}
I_{3,\eps,t,\xi}=\left\{\begin{aligned}
&\O\(\varepsilon^2\)&&\text{if }n=3\\
&\O\(\varepsilon^2\left|\ln\varepsilon\right|\)&&\text{if }n=4\\
&\O\(\varepsilon^{\frac{n}{n-2}}\)&&\text{if }n\ge5
\end{aligned}\right.
\end{equation}
when $\eps\to 0$.

\medskip\noindent{\it Step 5: End of proof of Lemma \ref{Lem2}.}\\
\noindent  The asymptotic expansion \eqref{Lem2Eq1} follows from \eqref{Lem2Eq3BIS}, \eqref{I:1:eps:lcf}, \eqref{est:I:2}, \eqref{Lem2Eq16} and \eqref{Lem2Eq26}.
\endproof
In Lemma~\ref{Lem3} below, we show that the first order terms in the asymptotic expansion of $\mathcal{J}_\varepsilon\(t,\xi\)$, defined in \eqref{Eq19}, are the same as for $J_\varepsilon\(u_0-W_{\delta_\varepsilon\(t\),\xi}\)$.

\begin{lemma}\label{Lem3}
Assume that either $\{3\le n\le9\}$ or $\{(M,g)$ is locally conformally flat and $h\equiv c_n\Scal_g\}$. Then
\begin{equation}\label{Lem3Eq1}
\mathcal{J}_\varepsilon\(t,\xi\)=J_\varepsilon\(u_0-W_{\delta_\varepsilon\(t\),\xi}\)+\o\(\varepsilon\)
\end{equation}
as $\varepsilon\to0$, uniformly with respect to $t$ in compact subsets of $\mathbb{R}_{>0}$ and with respect to the point $\xi$ in $M$.
\end{lemma}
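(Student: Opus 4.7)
The plan is to Taylor-expand $J_\varepsilon$ around the approximate solution $u_0-W_{\delta_\varepsilon(t),\xi}$ and control the remainder by the smallness of $\phi_{\delta_\varepsilon(t),\xi}$ furnished by Proposition \ref{Pr1}. Write $v:=u_0-W_{\delta_\varepsilon(t),\xi}$ and $\phi:=\phi_{\delta_\varepsilon(t),\xi}$ for brevity. Then
\begin{equation*}
\mathcal{J}_\varepsilon(t,\xi)-J_\varepsilon(v)=\<J_\varepsilon'(v),\phi\>+\tfrac12\<J_\varepsilon''(v)[\phi],\phi\>+Q_\varepsilon(\phi),
\end{equation*}
where $Q_\varepsilon(\phi):=\int_M\bigl(F_\varepsilon(v+\phi)-F_\varepsilon(v)-f_\varepsilon(v)\phi-\tfrac12 f_\varepsilon'(v)\phi^2\bigr)\,dv_g$.

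\textbf{Step 1 (linear term).} Using $\<J_\varepsilon'(v),\phi\>_h=\<v-i^*(f_\varepsilon(v)),\phi\>_h$ and the fact that $\phi\in K^\perp_{\delta_\varepsilon(t),\xi}$, we may insert the projection $\varPi^\perp_{\delta_\varepsilon(t),\xi}$ freely, so that
$$\<J_\varepsilon'(v),\phi\>=-\<R_{\varepsilon,\delta_\varepsilon(t),\xi},\phi\>_h,$$
with $R_{\varepsilon,\delta_\varepsilon(t),\xi}$ as in \eqref{Pr1Eq4}. By Cauchy--Schwarz, the bound $\|\phi\|_h\le C\nu_\varepsilon$ from Proposition~\ref{Pr1}, and the error estimate $\|R_{\varepsilon,\delta_\varepsilon(t),\xi}\|_h=\O(\nu_\varepsilon)$ from Lemma \ref{Lem4}, this term is $\O(\nu_\varepsilon^2)$.

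\textbf{Step 2 (quadratic and remainder terms).} Writing $\<J_\varepsilon''(v)[\phi],\phi\>=\|\phi\|_h^2-(\crit-1-\varepsilon)\int_M|v|^{\crit-2-\varepsilon}\phi^2\,dv_g$, and using the Sobolev embedding $H_1^2\hookrightarrow L^{\crit}$ together with $\|v\|_{\crit}=\O(1)$, one obtains $|\<J_\varepsilon''(v)[\phi],\phi\>|\le C\|\phi\|_h^2=\O(\nu_\varepsilon^2)$. For $Q_\varepsilon(\phi)$, a pointwise Taylor expansion of $F_\varepsilon$ gives $|F_\varepsilon(v+\phi)-F_\varepsilon(v)-f_\varepsilon(v)\phi-\tfrac12 f_\varepsilon'(v)\phi^2|\le C(|v|^{\crit-2-\varepsilon-\theta_0}+|\phi|^{\crit-2-\varepsilon-\theta_0})|\phi|^{2+\theta_0}$ for any $\theta_0\in(0,\min\{1,\crit-2\})$; integrating and applying Hölder together with Sobolev yields $|Q_\varepsilon(\phi)|\le C\|\phi\|_h^{2+\theta_0}=\O(\nu_\varepsilon^{2+\theta_0})$.

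\textbf{Step 3 (dimension check).} Combining Steps 1 and 2 gives $\mathcal{J}_\varepsilon(t,\xi)-J_\varepsilon(v)=\O(\nu_\varepsilon^2)$, so it suffices to verify $\nu_\varepsilon^2=\o(\varepsilon)$ in each case covered by the statement. In the range $3\le n\le 6$, $\nu_\varepsilon^2=\varepsilon^2|\ln\varepsilon|^2=\o(\varepsilon)$; in the range $7\le n\le 9$, $\nu_\varepsilon^2=\varepsilon^{8/(n-2)}$ with $8/(n-2)>1$, giving $\nu_\varepsilon^2=\o(\varepsilon)$; and in the locally conformally flat case $\nu_\varepsilon^2=\varepsilon^{(n+2)/(n-2)}=\o(\varepsilon)$ for every $n\ge 3$. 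This exhausts the dimensional restriction of the lemma (and explains why $n=10$ is critical: there $\nu_\varepsilon^2\sim\varepsilon$, so the quadratic correction is no longer absorbed in the $\o(\varepsilon)$ error). All estimates are uniform in $t$ on compact subsets of $\rr_{>0}$ and in $\xi\in M$ since $\phi$ and $R$ depend continuously on these parameters, which proves \eqref{Lem3Eq1}.

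The main obstacle is getting the linear term to be $\O(\nu_\varepsilon^2)$ rather than $\O(\nu_\varepsilon)$; this requires exploiting the orthogonality $\phi\in K^\perp_{\delta_\varepsilon(t),\xi}$ to replace $J_\varepsilon'(v)$ by its projection onto $K^\perp$, which is precisely $R_{\varepsilon,\delta_\varepsilon(t),\xi}$. Once this is done, the dimensional range in which the lemma applies is determined entirely by when $\nu_\varepsilon^2=\o(\varepsilon)$.
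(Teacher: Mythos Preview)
Your proof is correct and matches the paper's approach: Taylor-expand $J_\varepsilon$ at $v$, bound the linear term via Cauchy--Schwarz using Lemma~\ref{Lem4} and Proposition~\ref{Pr1}, and absorb the rest into $\O(\|\phi\|_h^2)$, then check $\nu_\varepsilon^2=\o(\varepsilon)$ in each dimensional regime. One remark: the orthogonality $\phi\in K^\perp$ is not actually needed in Step~1, since Lemma~\ref{Lem4} already bounds the \emph{unprojected} quantity $\|v-i^*(f_\varepsilon(v))\|_h$ by $\O(\nu_\varepsilon)$; the projection detour and your closing comment about it being the ``main obstacle'' are therefore unnecessary---the paper simply applies Cauchy--Schwarz directly to $\<v-i^*(f_\varepsilon(v)),\phi\>_h$.
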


\proof
All the estimates in this proof are uniform with respect to $t$ on compact subsets of $\mathbb{R}_{>0}$, with respect to the point $\xi$ in $M$, and with respect to $\varepsilon$ in $\(0,\varepsilon_0\)$ for some fixed positive real number $\varepsilon_0$. We get that
\begin{multline}\label{Lem3Eq2}
\mathcal{J}_\varepsilon\(t,\xi\)-J_\varepsilon\(u_0-W_{\delta_\varepsilon\(t\),\xi}\)\\
\quad=\<u_0-W_{\delta_\varepsilon\(t\),\xi}-i^*\(f_\varepsilon\(u_0-W_{\delta_\varepsilon\(t\),\xi}\)\),\phi_{\delta_\varepsilon\(t\),\xi}\>_h+\O\(\left\|\phi_{\delta_\varepsilon\(t\),\xi}\right\|_h^2\)
\end{multline}
when $\eps\to 0$. Proposition~\ref{Pr1} and Lemma~\ref{Lem4} yield
\begin{multline}
\<u_0-W_{\delta_\varepsilon\(t\),\xi}-i^*\(f_\varepsilon\(u_0-W_{\delta_\varepsilon\(t\),\xi}\)\),\phi_{\delta_\varepsilon\(t\),\xi}\>_h+\O\(\left\|\phi_{\delta_\varepsilon\(t\),\xi}\right\|^2_h\)\label{Lem3Eq3}\\
\qquad=\left\{\begin{aligned}
&\O\(\varepsilon^2\left|\ln\varepsilon\right|^2\)&&\text{if }n\le6\\
&\O\(\varepsilon^{\frac{8}{n-2}}\)&&\text{if }n\ge7\\
&\O\(\varepsilon^{\frac{n+2}{n-2}}\)&&\text{if }n\ge7,\,h\equiv c_n\Scal_g,\;(M,g)\text{ loc. conformally flat}
\end{aligned}\right.
\end{multline}
\noindent when $\eps\to 0$. Finally, \eqref{Lem3Eq1} follows from \eqref{Lem3Eq2}, and \eqref{Lem3Eq3}.
\endproof
The asymptotic expansion \eqref{Pr2Eq1} follows from Lemmas~\ref{Lem2} and~\ref{Lem3}. Now, we prove the second part of Proposition~\ref{Pr2}.

\proof[End of proof of Proposition~\ref{Pr2}]
Given two positive real numbers $a<b$, it remains to prove that for $\varepsilon$ small, if $\(t_\varepsilon,\xi_\varepsilon\)\in\[a,b\]\times M$ is a critical point of $\mathcal{J}_\varepsilon$, then the function $u_0-W_{\delta_\varepsilon\(t_\varepsilon\),\xi_\varepsilon}+\phi_{\delta_\varepsilon\(t_\varepsilon\),\xi_\varepsilon}$ is a solution of equation \eqref{Eq8}. In order to prove this claim, we consider a sequence of points $\(\xi_\alpha\)_\alpha$ in $M$ and two sequences of positive real numbers $\(\varepsilon_\alpha\)_\alpha$ and $\(t_\alpha\)_\alpha$ such that $\varepsilon_\alpha\to0$ as $\alpha\to +\infty$, $a\le t_\alpha\le b$, and $\(t_\alpha,\xi_\alpha\)$ is a critical point of $\mathcal{J}_{\varepsilon_\alpha}$ for all $\alpha$. It is enough to show that for $\alpha$ large, the function $u_0-W_{\delta_{\varepsilon_\alpha}\(t_\alpha\),\xi_\alpha}+\phi_{\delta_{\varepsilon_\alpha}\(t_\alpha\),\xi_\alpha}$ is a solution of equation \eqref{Eq8}. As in the proof of Lemma \ref{Lem1}, up to a subsequence, we identify the tangent space with $\mathbb{R}^n$ around the $\xi_\alpha$'s. We define
\begin{equation}\label{Pr2Eq2}
Z_{0,\delta_{\varepsilon_\alpha}\(t_{\alpha}\),\xi_\alpha}:=Z_{\delta_{\varepsilon_\alpha}\(t_{\alpha}\),\xi_\alpha}\quad\text{and}\quad Z_{i,\delta_{\varepsilon_\alpha}\(t_{\alpha}\),\xi_\alpha}:=Z_{\delta_{\varepsilon_\alpha}\(t_{\alpha}\),\xi_\alpha,e_i}
\end{equation}
for all $i=1,\dotsc,n$, where $e_i$ is the $i$-th vector in the canonical basis of $\mathbb{R}^n$ and the functions $Z_{\delta_{\varepsilon_\alpha}\(t_{\alpha}\),\xi_\alpha}$ and $Z_{\delta_{\varepsilon_\alpha}\(t_{\alpha}\),\xi_\alpha,e_i}$ are as in \eqref{Eq12} and \eqref{Eq13}. By Proposition~\ref{Pr1}, we get that
\begin{equation}\label{Pr2Eq3}
DJ_{\varepsilon_\alpha}\(u_0-W_{\delta_{\varepsilon_\alpha}\(t_{\alpha}\),\xi_\alpha}+\phi_{\delta_{\varepsilon_\alpha}\(t_{\alpha}\),\xi_\alpha}\)=\sum_{i=0}^n\lambda_{i,\alpha}\<Z_{i,\delta_{\varepsilon_\alpha}\(t_{\alpha}\),\xi_\alpha},\cdot\>_h
\end{equation}
for some real numbers $\lambda_{i,\alpha}$, where the functions $Z_{i,\delta_{\varepsilon_\alpha}\(t_{\alpha}\),\xi_\alpha}$ are as in \eqref{Pr2Eq2}. It follows from \eqref{Pr2Eq3} that
\begin{equation}\label{Pr2Eq4}
\frac{\partial\mathcal{J}_{\varepsilon_\alpha}}{\partial t}\(t_\alpha,\xi_\alpha\)=\sum_{i=0}^n\lambda_{i,\alpha}\langle Z_{i,\delta_{\varepsilon_\alpha}\(t_{\alpha}\),\xi_\alpha},\left.\frac{d}{dt}\(-W_{\delta_{\varepsilon_\alpha}\(t\),\xi_\alpha}+\phi_{\delta_{\varepsilon_\alpha}\(t\),\xi_\alpha}\)\right|_{t=t_\alpha}\rangle_h\,.
\end{equation}
On the one hand, we find
\begin{equation}\label{Pr2Eq5}
\left.\frac{d}{dt}W_{\delta_{\varepsilon_\alpha}\(t\),\xi_\alpha}\right|_{t=t_\alpha}=\frac{n^{\frac{n-2}{4}}\(n-2\)^{\frac{n+2}{4}}}{2t_\alpha}Z_{0,\delta_{\varepsilon_\alpha}\(t_{\alpha}\),\xi_\alpha}\,.
\end{equation}
On the other hand, for any $i=0,\dotsc,n$ and any $\alpha$, since the function $\phi_{\delta_{\varepsilon_\alpha}\(t_{\alpha}\),\xi_\alpha}$ belongs to $K^\perp_{\delta_{\varepsilon_\alpha}\(t_{\alpha}\),\xi_\alpha}$, differentiating $\<Z_{i,\delta_{\varepsilon_\alpha}\(t\),\xi_\alpha},\phi_{\delta_{\varepsilon_\alpha}\(t\),\xi_\alpha}\>_h=0$ with respect to $t$ yields
\begin{equation}\label{Pr2Eq6}
\<Z_{i,\delta_{\varepsilon_\alpha}\(t_{\alpha}\),\xi_\alpha},\left.\frac{d}{dt}\phi_{\delta_{\varepsilon_\alpha}\(t\),\xi_\alpha}\right|_{t=t_\alpha}\>_h=-\<\left.\frac{d}{dt}Z_{i,\delta_{\varepsilon_\alpha}\(t\),\xi_\alpha}\right|_{t=t_\alpha},\phi_{\delta_{\varepsilon_\alpha}\(t_{\alpha}\),\xi_\alpha}\>_h.
\end{equation}
Moreover, one easily checks
\begin{equation}\label{Pr2Eq7}
\left\|\left.\frac{d}{dt}Z_{i,\delta_{\varepsilon_\alpha}\(t\),\xi_\alpha}\right|_{t=t_\alpha}\right\|_h=\O\(1\)
\end{equation}
as $\alpha\to+\infty$. Proposition~\ref{Pr1}, \eqref{Pr2Eq6}, \eqref{Pr2Eq7},
\eqref{Lem1Eq6}, \eqref{Pr2Eq4}, and \eqref{Pr2Eq5} yield 
\begin{equation}\label{Pr2Eq10}
\frac{\partial\mathcal{J}_{\varepsilon_\alpha}}{\partial t}\(t_\alpha,\xi_\alpha\)=-\frac{n^{\frac{n-2}{4}}\(n-2\)^{\frac{n+2}{4}}}{2t_\alpha}\lambda_{0,\alpha}\left\|\nabla V_0\right\|_2^2+\o\(\sum_{i=0}^n\left|\lambda_{i,\alpha}\right|\)
\end{equation}
as $\alpha\to+\infty$, where the function $V_0$ is as in \eqref{Eq11}. For any $i=1,\dotsc,n$, by \eqref{Pr2Eq3}, we get that
\begin{multline*}
\left.\frac{d}{dy_i}\mathcal{J}_{\varepsilon_\alpha}\(t_\alpha,\exp_{\xi_\alpha}y\)\right|_{y=0}\\
=\sum_{j=0}^n\lambda_{j,\alpha}\<Z_{j,\delta_{\varepsilon_\alpha}\(t_{\alpha}\),\xi_\alpha},\left.\frac{d}{dy_i}\(-W_{\delta_{\varepsilon_\alpha}\(t_\alpha\),\exp_{\xi_\alpha}y}+\phi_{\delta_{\varepsilon_\alpha}\(t_\alpha\),\exp_{\xi_\alpha}y}\)\right|_{y=0}\>_h,
\end{multline*}
where the exponential map is taken with respect to the metric $g_{\xi_\alpha}$. On the one hand, direct computations yield
\begin{equation*}
\left.\frac{d}{dy_i}W_{\delta_{\varepsilon_\alpha}\(t_\alpha\),\exp_{\xi_\alpha}y}\right|_{y=0}=\frac{n^{\frac{n-2}{4}}\(n-2\)^{\frac{n+2}{4}}}{\delta_{\varepsilon_\alpha}\(t\)}\(Z_{i,\delta_{\varepsilon_\alpha}\(t_{\alpha}\),\xi_\alpha}+R_{i,\delta_{\varepsilon_\alpha}\(t_{\alpha}\),\xi_\alpha}\),
\end{equation*}
where $R_{i,\delta_{\varepsilon_\alpha}\(t_{\alpha}\),\xi_\alpha}\to0$ as $\alpha\to+\infty$ in $H_1^2\(M\)$. For any $i=1,\dotsc,n$, $j=0,\dotsc,n$, and any $\alpha$, since the function $\phi_{\delta_{\varepsilon_\alpha}\(t_{\alpha}\),\xi_\alpha}\in K^\perp_{\delta_{\varepsilon_\alpha}\(t_{\alpha}\),\xi_\alpha}$, differentiating the equation $\big<Z_{j,\delta_{\varepsilon_\alpha}\(t_\alpha\),\exp_{\xi_\alpha}y},\phi_{\delta_{\varepsilon_\alpha}\(t_\alpha\),\exp_{\xi_\alpha}y}\big>_h=0$ with respect to $y_i$ at $0$ yields
\begin{equation*}
\left\langle Z_{j,\delta_{\varepsilon_\alpha}\(t_{\alpha}\),\xi_\alpha},\frac{d}{dy_i}\phi_{\delta_{\varepsilon_\alpha}\(t_\alpha\),\exp_{\xi_\alpha}y }\right\rangle_h=-\left\langle\frac{d}{dy_i}Z_{j,\delta_{\varepsilon_\alpha}\(t_\alpha\),\exp_{\xi_\alpha}y},\phi_{\delta_{\varepsilon_\alpha}\(t_{\alpha}\),\xi_\alpha}\right\rangle_h.
\end{equation*}
Moreover, one easily checks
\begin{equation*}
\left\|\left.\frac{d}{dy_i}Z_{j,\delta_{\varepsilon_\alpha}\(t_\alpha\),\exp_{\xi_\alpha}y}\right|_{y=0}\right\|_h=\O\(\frac{1}{\delta_{\varepsilon_\alpha}\(t\)}\)
\end{equation*}
as $\alpha\to+\infty$. Similarly to the derivative in the $t-$direction, we get that
\begin{equation}\label{Pr2Eq17}
\delta_{\varepsilon_\alpha}\(t_\alpha\)\left.\frac{d}{dy_i}\mathcal{J}_{\varepsilon_\alpha}\(t_\alpha,\exp_{\xi_\alpha}y\)\right|_{y=0}=-n^{\frac{n-2}{4}}\(n-2\)^{\frac{n+2}{4}}\lambda_{i,\alpha}\left\|\nabla V_i\right\|_2+\o\(\sum_{j=0}^n\left|\lambda_{j,\alpha}\right|\)
\end{equation}
as $\alpha\to+\infty$, where the function $V_i$ is as in \eqref{Eq11}. If $\(t_\alpha,\xi_\alpha\)$ is a critical point of $\mathcal{J}_{\varepsilon_\alpha}$ for all $\alpha$, then it follows from \eqref{Pr2Eq10} and \eqref{Pr2Eq17} that for any $i=0,\dotsc,n$, there $\lambda_{i,\alpha}=0$ for all $i=0,\dotsc,n$. By \eqref{Pr2Eq3}, if follows that for $\alpha$ large, the function $u_0-W_{\delta_{\varepsilon_\alpha}\(t_\alpha\),\xi_\alpha}+\phi_{\delta_{\varepsilon_\alpha}\(t_\alpha\),\xi_\alpha}$ is a critical point of the functional $J_{\varepsilon_\alpha}$, and therefore a solution of equation \eqref{Eq8}. This ends the proof of Proposition~\ref{Pr2}.\endproof

\section{Proof of the theorems}\label{sec:pfs}
\proof[Proof of Theorems~\ref{Th1} and \ref{Th:lcf}]
We let $\mathcal{G}$ be the function defined on $\mathbb{R}_{>0}\times M$ by 
\begin{equation}\label{Th1Eq1}
\mathcal{G}\(t,\xi\):=c_4(n)\ln \frac{1}{t}+c_5(n)t^{\frac{n-2}{2}}u_0\(\xi\),
\end{equation}
where $c_4(n)$ and $c_5(n)$ are as in \eqref{Pr2Eq1}. Since $u_0$ is positive and $M$ is compact, we get
\begin{equation}\label{Th1Eq2}
\lim_{t\to0}\mathcal{G}\(t,\xi\)=+\infty\quad\text{and}\quad\lim_{t\to+\infty}\mathcal{G}\(t,\xi\)=+\infty
\end{equation}
uniformly with respect to $\xi\in M$. Since $h\equiv c_n\Scal_g$ and either $\{3\le n\le9\}$ or $\{(M,g)$ is locally conformally flat$\}$, it follows from Proposition~\ref{Pr2} that
\begin{equation}\label{Th1Eq3}
\lim_{\varepsilon\to0}\frac{1}{\varepsilon}\(\mathcal{J}_\varepsilon\(t,\xi\)-c_1(n,u_0)-c_2(n,u_0)\varepsilon-c_3(n)\varepsilon\ln\varepsilon\)=\mathcal{G}\(t,\xi\)
\end{equation}
uniformly with respect to $t$ in compact subsets of $\mathbb{R}_{>0}$ and with respect to the point $\xi$ in $M$. For $\varepsilon$ small, by \eqref{Th1Eq2}, \eqref{Th1Eq3}, and by continuity of $\mathcal{J}_\varepsilon$ and $\mathcal{G}$, we get the existence of a family of points $\(t_\varepsilon,\xi_\varepsilon\)$ which realize the minimum values of the functions $\mathcal{J}_\varepsilon$ in $\(a,b\)\times M$ for some positive real numbers $a<b$ independent of $\varepsilon$. By Proposition~\ref{Pr2}, it follows that for $\varepsilon$ small, the function $u_\varepsilon=u_0-W_{\delta_\varepsilon\(t_\varepsilon\),\xi_\varepsilon}+\phi_{\delta_\varepsilon\(t_\varepsilon\),\xi_\varepsilon}$ is a solution of equation \eqref{Eq1}, where $W_{\delta_\varepsilon\(t\),\xi}$ is as in \eqref{Eq9} and $\phi_{\delta_\varepsilon\(t\),\xi}$ is given by Proposition~\ref{Pr1}.\par

\medskip\noindent We get that $\lim_{\varepsilon\to 0}u_\varepsilon=u_0$ in $H_{1,loc.}^2(M\setminus\{\xi_0\})$ where $\xi_0:=\lim_{\varepsilon\to 0}\xi_\varepsilon$ (up to a subsequence): it then follows from standard elliptic theory that $\lim_{\varepsilon\to 0}u_\varepsilon=u_0$ in $C^2_{loc}(M\setminus\{\xi_0\})$. Independently, $\lim_{\varepsilon\to 0}\delta_\varepsilon\(t_\varepsilon\)^{\frac{n-2}{2}}u_\varepsilon(\hbox{exp}_{\xi_\epsilon}\cdot)=-U$ in $H_{1,loc.}^2(\rn)$, and still by elliptic theory, one then gets the convergence in $C^2_{loc}(\rn)$. This proves that $(u_\varepsilon)_{\varepsilon>0}$ changes sign and blows-up when $\varepsilon\to 0$. This ends the proof of Theorems~\ref{Th1} and \ref{Th:lcf}.\endproof

\noindent\proof[Proof of Theorems~\ref{Th2} and \ref{Th:dim6}]
In dimensions $3\leq n\le5$, the proof of Theorem \ref{Th2} is similar to the proof of Theorem~\ref{Th1}. The specificity of dimension $n=6$, is that the function $\mathcal{G}$ in \eqref{Th1Eq1} is replaced by
$$\mathcal{G}\(t,\xi\):=c_4(6)\ln \frac{1}{t}+c_5(6)\(u_0\(\xi\)+\frac{1}{2}\(h\(\xi\)-c_6\Scal_g\(\xi\)\)\)t^2,$$
where $c_4(6),\,c_5(6)>0$ are as in \eqref{Pr2Eq1}: therefore \eqref{Th1Eq2} holds with the hypothesis of Theorem \ref{Th2} and the proof of Theorem \ref{Th2} goes as for Theorem \ref{Th1}. We focus on Theorem \ref{Th:dim6}. In dimension $n=6$, computations similar to \eqref{Pr2Eq1} yield
\begin{multline*}
J_\varepsilon^+\(u_0+W_{\delta_\varepsilon\(t\),\xi}\)=c_1(6,u_0)+c_2(6,u_0)\varepsilon+c_3(6)\varepsilon\ln\varepsilon\\
+\bigg(c_4(6)\ln\frac{1}{t}+c_5(6)\(\frac{1}{2}\left(h\(\xi\)-c_6\Scal_g\(\xi\)\right)-u_0(\xi)\)t^2\bigg)\varepsilon+\o\(\varepsilon\)
\end{multline*}
as $\varepsilon\to0$, where $J^+_\varepsilon\(u\):=\frac{1}{2}\int_M\left|\nabla u\right|^2_gdv_g+\frac{1}{2}\int_Mhu^2dv_g-\frac{1}{\crit-\eps}\int_Mu_+^{\crit-\eps}dv_g$. The proof then is similar to the proof of Theorem \ref{Th2}. \endproof
\noindent\proof[Proof of Theorem~\ref{Th:dim10}] The introduction of another type of model for blow-up is required here. It follows from Lee--Parker~\cite{leeparker} that for any $\xi\in M$, there exists $\Lambda_\xi\in C^\infty(M)$ positive such that $g_\xi:=\Lambda_\xi^{\frac{4}{n-2}}g$ satisfies $dv_{g_{\xi}}=(1+O(d_{g_\xi}(\xi,\cdot)^n))\, dx$ in a geodesic normal chart. An immediate consequence is that $\Scal_{g_\xi}(\xi)=|\nabla\Scal_{g_\xi}(\xi)|=0$ and $\Delta_{g_\xi}\Scal_{g_\xi}(\xi)=\frac{1}{6}|\Weyl_g(\xi)|^2_g$. Moreover, we can assume that $(\xi,x)\mapsto \Lambda_\xi(x)$ is $C^\infty$ and $\nabla\Lambda_\xi(\xi)=0$. We define $W_{\delta,\xi}$ in \eqref{Eq9} with the function $\Lambda_\xi$ above. When $h\equiv c_n\Scal_g$, the conformal law of change of metric yields the Taylor expansion
\begin{multline}\label{eq:exp:10}
J_\varepsilon\(u_0-W_{\delta_\varepsilon\(t\),\xi}\)=c_1(n,u_0)+c_2(n,u_0)\varepsilon+c_3(n)\varepsilon\ln\varepsilon
+\frac{K_n^{-n}}{n}\bigg(\frac{(n-2)^2}{4}\eps\ln\frac{1}{t}\\+\frac{2^n\omega_{n-1}}{\omega_n(n(n-2))^{(n-2)/4}}u(\xi)\eps t^{\frac{n-2}{2}}-\frac{|\Weyl_g(\xi)|_g^2}{24(n-4)(n-6)}\eps^{\frac{8}{n-2}}t^4\bigg)+\o\(\varepsilon+\varepsilon^{\frac{8}{n-2}}\)
\end{multline}
when $\eps\to 0$ for $n\geq 7$. When $n<10$, the term involving the Weyl tensor is neglictible. When $n=10$, it competes with the one involving $u_0$: arguing as in the proofs above, we get the existence of a blowing-up family when $u_0>\frac{5}{567}|\Weyl_g|_g^2$, which proves Theorem \ref{Th:dim10} since the additional terms involving $\phi_{\delta_\varepsilon\(t\),\xi}$ are neglictible when $n\leq 17$. When $n>10$, the Weyl tensor dominates but the negative sign does not allow to construct a critical point for the reduced functional.\endproof

\noindent\proof[Proof of Theorem~\ref{Th:presc}] If $\xi_0\in M$ is a strict minimizer of $\Phi$ on $\overline{B}_{\xi_0}(\nu_0)\subset M$ with $\nu_0>0$, the arguments above extend by minimizing $\mathcal{G}$ on $(0,+\infty)\times B_{\xi_0}(\nu_0)$.\endproof
%It follows from standard computations that Taylor expansions like \eqref{Pr2Eq1} hold in $C^1$ when $h\in C^1(M)$. The conclusion then follows from the definition of $C^1-$stable critical points.

\section{Error estimate}\label{sec:error}
This section is devoted to the error estimate used in previous sections.  All notations refer to Section~\ref{sec:finite}. The estimate is as follows:

\begin{lemma}\label{Lem4}
Given two positive real numbers $a<b$, there exists a positive constant $C'_{a,b}$ such that for $\varepsilon$ small, for any real number $t$ in $\[a,b\]$, and any point $\xi$ in $M$, there holds
\begin{multline}
\left\|i^*\(f_\varepsilon\(u_0-W_{\delta_\varepsilon\(t\),\xi}\)\)-u_0+W_{\delta_\varepsilon\(t\),\xi}\right\|_h\label{Lem4Eq1}\\
\qquad\le C'_{a,b}\left\{\begin{aligned}
&\varepsilon\left|\ln\varepsilon\right|&&\text{if }n\le6\\
&\varepsilon^{\frac{4}{n-2}}&&\text{if }n\ge7\\
&\varepsilon^{\frac{n+2}{2\(n-2\)}}&&\text{if }n\ge7,\,h\equiv c_n\Scal_g,\text{ and }(M,g)\text{ loc. conformally flat},
\end{aligned}\right.
\end{multline}
where $\delta_\varepsilon\(t\)=t\varepsilon^{2/\(n-2\)}$ and $W_{\delta_\varepsilon\(t\),\xi}$ is as in \eqref{Eq9}.
\end{lemma}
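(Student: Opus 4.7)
The plan is to use a duality characterization of $\Vert\cdot\Vert_h$ together with the defining property of $i^\ast$. Setting $R_\varepsilon := i^\ast(f_\varepsilon(u_0 - W_{\delta_\varepsilon(t),\xi})) - u_0 + W_{\delta_\varepsilon(t),\xi}$ and using that $u_0$ solves \eqref{Th2Eq}, integration by parts gives, for every $\varphi\in H_1^2(M)$,
$$\<R_\varepsilon, \varphi\>_h = \int_M \bigl[f_\varepsilon(u_0 - W_{\delta,\xi}) - u_0^{\crit-1} + \Delta_g W_{\delta,\xi} + h W_{\delta,\xi}\bigr]\varphi\, dv_g.$$
By Sobolev's embedding $\|\varphi\|_{\crit}\le C\|\varphi\|_h$ (equivalently, continuity of $i^\ast$ from $L^{2n/(n+2)}$ to $H_1^2$), it suffices to bound the $L^{2n/(n+2)}$--norm of the integrand, which I split as $E_A + E_B + E_C$ with
\begin{align*}
E_A &:= f_\varepsilon(u_0 - W_{\delta,\xi}) - f_0(u_0 - W_{\delta,\xi}),\\
E_B &:= f_0(u_0 - W_{\delta,\xi}) + W_{\delta,\xi}^{\crit-1} - u_0^{\crit-1},\\
E_C &:= \Delta_g W_{\delta,\xi} + h W_{\delta,\xi} - W_{\delta,\xi}^{\crit-1}.
\end{align*}

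For $E_A$, I would use the pointwise bound $|a^{\crit-1-\varepsilon} - a^{\crit-1}|\le C\varepsilon a^{\crit-1}(1 + |\ln a|)$, rescale to the fixed profile $U$ on $\rn$ via $x=\exp_\xi(\delta y)$, and observe that the resulting integral produces a factor $|\ln\delta|=O(|\ln\varepsilon|)$, giving an overall contribution of order $\varepsilon|\ln\varepsilon|$ in every dimension. For $E_B$, I would invoke the elementary pointwise inequality $\bigl||a-b|^{\crit-2}(a-b) - a^{\crit-1} + b^{\crit-1}\bigr|\le C(a\,b^{\crit-2} + a^{\crit-2} b)$ for $a,b\ge 0$, split the integral in the region $\{W>u_0\}$ (a shrinking geodesic ball) and its complement, and use Hölder; the dominant contribution is from $u_0\,W_{\delta,\xi}^{\crit-2}$, whose $L^{2n/(n+2)}$--norm is controlled by $\delta^{(n-2)/2} = O(\varepsilon)$ when $n\le 5$, by $\varepsilon|\ln\varepsilon|$ when $n=6$, and by $\varepsilon^{4/(n-2)}$ when $n\ge 7$.

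The main obstacle is $E_C$. I would use Cartan's expansion of $g$ (or of $g_\xi$ in the conformal framework) in geodesic normal coordinates at $\xi$ to write $\Delta_g W_{\delta,\xi} - W_{\delta,\xi}^{\crit-1}$ as a sum of correction terms of the schematic form $\delta^2 R_{ij}(\xi)\,\partial_i\partial_j U_\delta$ plus lower-order contributions involving the Weyl tensor, the scalar curvature, and the cutoff commutator; after rescaling, the $L^{2n/(n+2)}$--norm of the geometric defect is of order $\delta^2 = \varepsilon^{4/(n-2)}$ for $n\ge 7$, while for $n\le 6$ the same scaling combined with the $hW_{\delta,\xi}$ term produces only $\varepsilon|\ln\varepsilon|$. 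In the locally conformally flat case with $h\equiv c_n\Scal_g$, the conformal covariance of $\Delta_g + c_n\Scal_g$ together with the flatness of $g_\xi$ on $B_\xi(r_0)$ forces $E_C$ to vanish pointwise on $B_\xi(r_0/2)$; only the commutator with the cutoff $\chi$ survives, and it is supported in the fixed annulus $\{r_0/2\le d_{g_\xi}(\cdot,\xi)\le r_0\}$ where $W_{\delta,\xi}$ and its derivatives are of size $\delta^{(n-2)/2}$. A direct computation on this fixed annulus then yields an $L^{2n/(n+2)}$--bound of order $\delta^{(n+2)/4} = \varepsilon^{(n+2)/(2(n-2))}$, which is smaller than $\varepsilon^{4/(n-2)}$ for $n\ge 7$.

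Summing the three contributions and retaining the dominant one in each regime produces \eqref{Lem4Eq1}. The delicate point is verifying the sharpness of the Cartan-type expansion for $E_C$, and in particular the cancellation in the LCF geometric case: this forces one to work with the representative $g_\xi$ of the conformal class rather than $g$ itself, and to use the conformal transformation law $\Lambda_\xi^{\frac{n+2}{n-2}}(\Delta_g + c_n\Scal_g)u = (\Delta_{g_\xi} + c_n\Scal_{g_\xi})(\Lambda_\xi u)$ to transfer the estimate.
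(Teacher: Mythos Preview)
Your overall strategy---reduce to an $L^{2n/(n+2)}$ estimate via the continuity of $i^\ast$, then split the integrand into the $\varepsilon$--perturbation of the exponent, the nonlinear interaction, and the linear defect of the bubble---is exactly the paper's approach, with a cosmetically different decomposition (you pull out $f_\varepsilon-f_0$ first, the paper keeps $f_\varepsilon$ throughout). For the first and second lines of \eqref{Lem4Eq1} your outline is correct.

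There is, however, a genuine gap in the locally conformally flat case $n\ge 7$. Your pointwise inequality
\[
\bigl|\,|a-b|^{\crit-2}(a-b)-a^{\crit-1}+b^{\crit-1}\,\bigr|\le C\bigl(a\,b^{\crit-2}+a^{\crit-2}b\bigr)
\]
is true but not sharp enough. For $n\ge 7$ one has $\crit-2<1$, so on the inner region $\{W>u_0\}$ the summand $u_0^{\crit-2}W$ dominates $u_0\,W^{\crit-2}$; integrating it there already costs
\[
\big\|u_0^{\crit-2}W\big\|_{L^{2n/(n+2)}(\{W>u_0\})}\;\asymp\;\|W\|_{L^{2n/(n+2)}(M)}\;=\;O(\delta^2)=O\big(\varepsilon^{4/(n-2)}\big),
\]
which is exactly the bound you state for $E_B$. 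This is \emph{larger} than the target $\varepsilon^{(n+2)/(2(n-2))}$, so in the LCF regime your $E_B$ swamps the improved $E_C$ and you cannot recover the third line of \eqref{Lem4Eq1}. (Incidentally, your cutoff computation for $E_C$ on the fixed annulus actually gives $O(\delta^{(n-2)/2})=O(\varepsilon)$, not $O(\delta^{(n+2)/4})$; this is harmless since it is smaller, but it confirms that $E_B$ is the bottleneck.)

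The fix is to use a \emph{region-dependent} pointwise bound rather than the symmetric sum: Taylor-expand $f_0(u_0-W)$ about $-W$ on $B_\xi(\sqrt{\delta})$ to get $|E_B|\le C\,u_0\,W^{\crit-2}$ there, and about $u_0$ on the complement to get $|E_B|\le C\,u_0^{\crit-2}W$ there. Each restricted term then has $L^{2n/(n+2)}$--norm $O(\delta^{(n+2)/4})=O\big(\varepsilon^{(n+2)/(2(n-2))}\big)$, which is precisely what the paper does (this is its term $\tilde I_{1,\varepsilon,t,\xi}$ in Step~1 of the proof).
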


\proof
All our estimates in this proof are uniform with respect to $t$ in $\[a,b\]$, $\xi$ in $M$, and $\varepsilon$ in $\(0,\varepsilon_0\)$ for some fixed positive real number $\varepsilon_0$. The continuity of $i^*$ yields
\begin{multline*}
\left\|i^*\(f_\varepsilon\(u_0-W_{\delta_\varepsilon\(t\),\xi}\)\)-u_0+W_{\delta_\varepsilon\(t\),\xi}\right\|_h\\
=\O\(\left\|f_\varepsilon\(u_0-W_{\delta_\varepsilon\(t\),\xi}\)-\(\Delta_g+h\)\(u_0-W_{\delta_\varepsilon\(t\),\xi}\)\right\|_{\frac{2n}{n+2}}\).
\end{multline*}
It follows that
\begin{equation}\label{Lem4Eq3}
\left\|i^*\(f_\varepsilon\(u_0-W_{\delta_\varepsilon\(t\),\xi}\)\)-u_0+W_{\delta_\varepsilon\(t\),\xi}\right\|_h=\O\Big(\tilde{I}_{1,\eps,t,\xi}+\tilde{I}_{2,\eps,t,\xi}+\tilde{I}_{3,\eps,t,\xi}\Big),
\end{equation}
where
$$\tilde{I}_{1,\eps,t,\xi}:=\left\|f_\varepsilon\(u_0-W_{\delta_\varepsilon\(t\),\xi}\)-f_\varepsilon\(u_0\)+f_\varepsilon\(W_{\delta_\varepsilon\(t\),\xi}\)\right\|_{\frac{2n}{n+2}},$$
$$\tilde{I}_{2,\eps,t,\xi}:=\left\|f_\varepsilon\(u_0\)-\Delta_gu_0-hu_0\right\|_{\frac{2n}{n+2}},$$
$$\tilde{I}_{3,\eps,t,\xi}:=\left\|f_\varepsilon\(W_{\delta_\varepsilon\(t\),\xi}\)-\Delta_gW_{\delta_\varepsilon\(t\),\xi}-hW_{\delta_\varepsilon\(t\),\xi}\right\|_{\frac{2n}{n+2}}.$$
We estimate these terms separately.

\medskip\noindent{\it Step 1: Estimate of $\tilde{I}_{1,\eps,t,\xi}$.}\\
\noindent We get
\begin{multline*}
\tilde{I}_{1,\eps,t,\xi}
\le\left\|\(f_\varepsilon\(u_0-W_{\delta_\varepsilon\(t\),\xi}\)+f_\varepsilon\(W_{\delta_\varepsilon\(t\),\xi}\)\)\mathbf{1}_{B_\xi(\sqrt{\delta_\varepsilon\(t\)})}\right\|_{\frac{2n}{n+2}}\\
+\left\|\(f_\varepsilon\(u_0-W_{\delta_\varepsilon\(t\),\xi}\)-f_\varepsilon\(u_0\)\)\mathbf{1}_{M\backslash B_\xi(\sqrt{\delta_\varepsilon\(t\)})}\right\|_{\frac{2n}{n+2}}\\
+\left\|f_\varepsilon\(W_{\delta_\varepsilon\(t\),\xi}\)\mathbf{1}_{M\backslash B_\xi(\sqrt{\delta_\varepsilon\(t\)})}\right\|_{\frac{2n}{n+2}}+\left\|f_\varepsilon\(u_0\)\mathbf{1}_{B_\xi(\sqrt{\delta_\varepsilon\(t\)})}\right\|_{\frac{2n}{n+2}}.
\end{multline*}
As is easily checked, Taylor's expansion for $f_\varepsilon\(u_0-W_{\delta_\varepsilon\(t\),\xi}\)$ yields
\begin{multline*}
\left\|\(f_\varepsilon\(u_0-W_{\delta_\varepsilon\(t\),\xi}\)+f_\varepsilon\(W_{\delta_\varepsilon\(t\),\xi}\)\)\mathbf{1}_{B_\xi(\sqrt{\delta_\varepsilon\(t\)})}\right\|_{\frac{2n}{n+2}}\\
\le C\(\left\|u_0W_{\delta_\varepsilon\(t\),\xi}^{2^*-2-\varepsilon}\mathbf{1}_{B_\xi(\sqrt{\delta_\varepsilon\(t\)})}\right\|_{\frac{2n}{n+2}}+\left\|u_0^{2^*-1-\varepsilon}\mathbf{1}_{B_\xi(\sqrt{\delta_\varepsilon\(t\)})}\right\|_{\frac{2n}{n+2}}\)
\end{multline*}
and
\begin{multline*}
\left\|\(f_\varepsilon\(u_0-W_{\delta_\varepsilon\(t\),\xi}\)-f_\varepsilon\(u_0\)\)\mathbf{1}_{M\backslash B_\xi(\sqrt{\delta_\varepsilon\(t\)})}\right\|_{\frac{2n}{n+2}}\\
\le C\(\left\|u_0^{2^*-2-\varepsilon}W_{\delta_\varepsilon\(t\),\xi}\mathbf{1}_{M\backslash B_\xi(\sqrt{\delta_\varepsilon\(t\)})}\right\|_{\frac{2n}{n+2}}+\left\|W_{\delta_\varepsilon\(t\),\xi}^{2^*-1-\varepsilon}\mathbf{1}_{M\backslash B_\xi(\sqrt{\delta_\varepsilon\(t\)})}\right\|_{\frac{2n}{n+2}}\).
\end{multline*}
Estimating roughly these terms yields
\begin{equation}\label{Lem4Eq11}
\tilde{I}_{1,\eps,t,\xi}=\left\{\begin{aligned}
&\O\(\varepsilon\)&&\text{if }n\le5\\
&\O\(\varepsilon\left|\ln\varepsilon\right|^{\frac{2}{3}}\)&&\text{if }n=6\\
&\O\(\varepsilon^{\frac{n+2}{2\(n-2\)}}\)&&\text{if }n\ge7
\end{aligned}\right.
\end{equation}
when $\eps\to 0$.

\medskip\noindent{\it Step 2: Estimate of $\tilde{I}_{2,\eps,t,\xi}$.}\\
\noindent Since $u_0$ is a solution of \eqref{Th2Eq}, we get that
\begin{equation}\label{Lem4Eq12}
\tilde{I}_{2,\eps,t,\xi}=\left\|f_\varepsilon\(u_0\)-f_0\(u_0\)\right\|_{\frac{2n}{n+2}}=\O\(\varepsilon\).
\end{equation}

\medskip\noindent{\it Step 3: Estimate of $\tilde{I}_{3,\eps,t,\xi}$.}\\
\noindent We define $\chi_\xi\(\cdot\)=\chi\(d_{g_\xi}\(\cdot,\xi\)\)$, $U_{\delta,\xi}\(\cdot\)=\delta^{\frac{2-n}{2}}U(\delta^{-1}\exp_\xi^{-1}(\cdot))$, where the function $U$ is as in \eqref{Eq10} and the exponential map is taken with respect to the metric $g_\xi$. 

\medskip\noindent{\it Step 3.1: Estimate of $I_{3,\eps,t,\xi}$ when $(M,g)$ is locally conformally flat and $h\equiv c_n \Scal_g$.}\\ 
Since $g_\xi=\varLambda_\xi^{4/\(n-2\)}g$ is flat, we get that
$$f_\varepsilon\(W_{\delta_\varepsilon\(t\),\xi}\)-\Delta_gW_{\delta_\varepsilon\(t\),\xi}-hW_{\delta_\varepsilon\(t\),\xi}=\varLambda_\xi^{2^*-1}\(\varLambda_\xi^{-\varepsilon}f_\varepsilon\(\widetilde{W}_{\delta_\varepsilon\(t\),\xi}\)-\Delta_{g_\xi}\widetilde{W}_{\delta_\varepsilon\(t\),\xi}\),$$
where $\widetilde{W}_{\delta_\varepsilon\(t\),\xi}=W_{\delta_\varepsilon\(t\),\xi}/\varLambda_\xi$. In this case, since the metric $g_\xi$ is flat in $B_\xi\(r_0\)$ and since the function $U$ is a solution of the equation $\Delta_{\Eucl}U=U^{2^*-1}$ in $\mathbb{R}^n$, we get that
\begin{multline}\label{Lem4Eq13}
I_{3,\eps,t,\xi}\le\left\|\(\chi_\xi\varLambda_\xi\)^{2^*-1-\varepsilon}\(U^{2^*-1-\varepsilon}_{\delta_\varepsilon\(t\),\xi}-U^{2^*-1}_{\delta_\varepsilon\(t\),\xi}\)\right\|_{\frac{2n}{n+2}}\\
+\left\|\(\chi_\xi^{2^*-1-\varepsilon}\varLambda_\xi^{-\varepsilon}-\chi_\xi\)\varLambda_\xi^{2^*-1}U^{2^*-1}_{\delta_\varepsilon\(t\),\xi}\right\|_{\frac{2n}{n+2}}+\left\|\varLambda_\xi^{\crit-1} U_{\delta_\varepsilon\(t\),\xi}\Delta_{g_\xi}\chi_\xi\right\|_{\frac{2n}{n+2}}\\
+2\left\|\varLambda_\xi^{\crit-1} \<\nabla\chi_\xi,\nabla U_{\delta_\varepsilon\(t\),\xi}\>_{g_\xi}\right\|_{\frac{2n}{n+2}}.
\end{multline}

\medskip\noindent{\it Step 3.2: Estimate of $I_{3,\eps,t,\xi}$ in the general case.}\\
\noindent In general, we get that 
\begin{multline}\label{Lem4Eq14}
I_{3,\eps,t,\xi}\le\left\|\chi_\xi^{2^*-1-\varepsilon}\(U^{2^*-1-\varepsilon}_{\delta_\varepsilon\(t\),\xi}-U^{2^*-1}_{\delta_\varepsilon\(t\),\xi}\)\right\|_{\frac{2n}{n+2}}+\left\|\(\chi_\xi^{2^*-1-\varepsilon}-\chi_\xi\)U^{2^*-1}_{\delta_\varepsilon\(t\),\xi}\right\|_{\frac{2n}{n+2}}\\
+\left\|\chi_\xi\(U_{\delta_\varepsilon\(t\),\xi}^{\crit-1}-\Delta_gU_{\delta_\varepsilon\(t\),\xi}\)\right\|_{\frac{2n}{n+2}}+\left\|U_{\delta_\varepsilon\(t\),\xi}\Delta_g\chi_\xi\right\|_{\frac{2n}{n+2}}\\
+2\left\|\<\nabla\chi_\xi,\nabla U_{\delta_\varepsilon\(t\),\xi}\>_g\right\|_{\frac{2n}{n+2}}+\left\|h\chi_\xi U_{\delta_\varepsilon\(t\),\xi}\right\|_{\frac{2n}{n+2}}.
\end{multline}

\medskip\noindent{\it Step 3.3: Estimates of the terms in \eqref{Lem4Eq13} and \eqref{Lem4Eq14}.}\\
\noindent Since $\chi_\xi\equiv 1$ on $B_{\xi}(r_0/2)$ and  $\chi_\xi\equiv 0$ on $M\setminus B_{\xi}(r_0)$, we get that 
\begin{align}
&\int_M\left|\(\chi_\xi\varLambda_\xi\)^{2^*-1-\varepsilon}\(U^{2^*-1-\varepsilon}_{\delta_\varepsilon\(t\),\xi}-U^{2^*-1}_{\delta_\varepsilon\(t\),\xi}\)\right|^{\frac{2n}{n+2}}dv_g=\O\(\varepsilon^{\frac{2n}{n+2}}\left|\ln\varepsilon\right|^{\frac{2n}{n+2}}\),\label{Lem4Eq15}\\
&\int_M\left|\(\chi_\xi^{2^*-1-\varepsilon}\varLambda_\xi^{-\varepsilon}-\chi_\xi\)\varLambda_\xi^{2^*-1}U^{2^*-1}_{\delta_\varepsilon\(t\),\xi}\right|^{\frac{2n}{n+2}}dv_g=\O\(\varepsilon^{\frac{2n}{n+2}}\),\label{Lem4Eq16}\\
&\int_M\left|\varLambda_\xi^{\crit-1} U_{\delta_\varepsilon\(t\),\xi}\Delta_g\chi_\xi\right|^{\frac{2n}{n+2}}=\O\(\varepsilon^{\frac{2n}{n+2}}\)\label{Lem4Eq18}
\end{align}
when $\eps\to 0$. A rough $L^\infty$ upper bound for $|\nabla U_{\delta_\eps(t),\xi}|$ on $M\setminus B_{r_0/2}(\xi)$ yields 
\begin{align}
&\int_M\left|\<\varLambda_\xi^{\crit-1}\nabla\chi_\xi,\nabla U_{\delta_\varepsilon\(t\),\xi}\>_g\right|^{\frac{2n}{n+2}}dv_g=\O\(\varepsilon^{\frac{2n}{n+2}}\).\label{Lem4Eq19}
\end{align}
Since $\Delta_{\Eucl}U=U^{\crit-1}$, we get in the chart $\hbox{exp}_{\xi}$ that
$$\Delta_g U_{\delta_\varepsilon\(t\),\xi}-U_{\delta_\varepsilon\(t\),\xi}^{\crit-1}=-(g^{ij}-\delta^{ij})\partial_{ij}U_{\delta_\varepsilon\(t\),\xi}+g^{ij}\Gamma_{ij}^k\partial_k U_{\delta_\varepsilon\(t\),\xi}\,,$$
where the $g^{ij}$'s are the coordinate of the metric $g=g_\xi$ and $\Gamma^\gamma_{\alpha\beta}$'s are the Christoffel symbols of the metric $g$ in the normal chart $\hbox{exp}_{\xi}$. Cartan's expansion of the metric yields $|g^{ij}(x)-\delta^{ij}|\leq C|x|^2$ and $|\Gamma^k_{ij}(x)|\leq C|x|$ around $0$, and therefore
$$\left|U_{\delta_\varepsilon\(t\),\xi}^{\crit-1}-\Delta_g U_{\delta_\varepsilon\(t\),\xi}\right|\leq C|x|^2|\nabla^2 U_{\delta_\varepsilon\(t\),\xi}|+C|x|\cdot |\nabla U_{\delta_\varepsilon\(t\),\xi}|$$
via the chart $\hbox{exp}_{\xi}$. Therefore, we get that
\begin{equation}\label{Lem4Eq17}
\int_M\left|\chi_\xi\(U_{\delta_\varepsilon\(t\),\xi}^{\crit-1}-\Delta_gU_{\delta_\varepsilon\(t\),\xi}\)\right|^{\frac{2n}{n+2}}dv_g=\left\{\begin{aligned}
&\O\(\varepsilon^{\frac{2n}{n+2}}\)&&\text{if }n\le5\\
&\O\(\varepsilon^{\frac{3}{2}}\left|\ln\varepsilon\right|\)&&\text{if }n=6\\
&\O\(\varepsilon^{\frac{8n}{\(n+2\)\(n-2\)}}\)&&\text{if }n\ge7.
\end{aligned}\right.
\end{equation}
It remains to compute
\begin{equation}
\int_M\left|h\chi_\xi U_{\delta_\varepsilon\(t\),\xi}\right|^{\frac{2n}{n+2}}dv_g=\left\{\begin{aligned}
&\O\(\varepsilon^{\frac{2n}{n+2}}\)&&\text{if }n\le5\\
&\O\(\varepsilon^{\frac{3}{2}}\left|\ln\varepsilon\right|\)&&\text{if }n=6\\
&\O\(\varepsilon^{\frac{8n}{\(n+2\)\(n-2\)}}\)&&\text{if }n\ge7
\end{aligned}\right.\label{Lem4Eq20}
\end{equation}
when $\eps\to 0$.

\medskip\noindent{\it Step 3.4: End of estimate of $\tilde{I}_{3,\eps,t,\xi}$.}\\
\noindent By \eqref{Lem4Eq14}--\eqref{Lem4Eq20}, we get
\begin{equation}\label{Lem4Eq21}
\left\|f_\varepsilon\(W_{\delta_\varepsilon\(t\),\xi}\)-\Delta_gW_{\delta_\varepsilon\(t\),\xi}-hW_{\delta_\varepsilon\(t\),\xi}\right\|_{\frac{2n}{n+2}}=\left\{\begin{aligned}
&\O\(\varepsilon\left|\ln\varepsilon\right|\)&&\text{if }n\le6\\
&\O\(\varepsilon^{\frac{4}{n-2}}\)&&\text{if }n\ge7.
\end{aligned}\right.
\end{equation}
In case $h\equiv c_n\Scal_g$ and the manifold is locally conformally flat, by \eqref{Lem4Eq13}, \eqref{Lem4Eq15}--\eqref{Lem4Eq19}, we get
\begin{equation}\label{Lem4Eq22}
\left\|f_\varepsilon\(W_{\delta_\varepsilon\(t\),\xi}\)-\Delta_gW_{\delta_\varepsilon\(t\),\xi}-hW_{\delta_\varepsilon\(t\),\xi}\right\|_{\frac{2n}{n+2}}=\O\(\varepsilon\left|\ln\varepsilon\right|\)
\end{equation}
when $\eps\to 0$.

\medskip\noindent{\it Step 4: End of proof of \eqref{Lem4Eq1}.}\\
\noindent Finally, \eqref{Lem4Eq1} follows from \eqref{Lem4Eq3}, \eqref{Lem4Eq11}, \eqref{Lem4Eq12}, \eqref{Lem4Eq21}, and \eqref{Lem4Eq22}.
\endproof

\end{document}